\numberwithin{equation}{section}
\renewcommand{\email}[2][]{%
  \ifx\emails\@empty\relax\else{\g@addto@macro\emails{,\space}}\fi%
  \@ifnotempty{#1}{\g@addto@macro\emails{\textrm{(#1)}\space}}%
  \g@addto@macro\emails{#2}%
}
\newtheorem{theorem}{Theorem}[section]
\newtheorem{lemma}[theorem]{Lemma}
\newtheorem{proposition}[theorem]{Proposition}
\newtheorem{corollary}[theorem]{Corollary}
{ \theoremstyle{definition}
}
{ \theoremstyle{remark}
\newtheorem{remark}[theorem]{Remark}}
\def\i{ \mathsf{i}}
\newcommand{\R}{\mathbb{R}}
\renewcommand{\P}{\mathbb{P}}
\def\me{ \mu_{\mathsf{eq}}}
\title{A global large deviation principle for discrete $\beta$-ensembles}
\author[E. Dimitrov]{Evgeni Dimitrov}
\address{E. Dimitrov, Department of Mathematics, Rm. 517, Columbia University, New York, NY 10027} \email{esd2138@columbia.edu}
\author[H. Zhang]{Hengzhi Zhang}
\address{H. Zhang, Department of Mathematics, Columbia University, New York, NY 10027} \email{hz2663@columbia.edu}
\begin{document}

\maketitle

\begin{abstract}
We consider discrete $\beta$-ensembles, as introduced by Borodin, Gorin and Guionnet in (Publications math{\' e}matiques de l'IH{\' E}S 125, 1-78, 2017). Under general assumptions, we establish a  large deviation principle for the empirical (or spectral) measures corresponding to these models. Our results apply in the cases when the potential of the model depends on the number of particles, and/or has slow growth near infinity, leading to an equilibrium measure with infinite support.
\end{abstract}

\tableofcontents

%
\section{Introduction and main results}\label{Section1}
Over the last few decades, several authors (e.g. \cite{agz}, \cite{BG97}, \cite{fe},\cite{Hardy}, \cite{HP00}) have established global large deviation principles (LDPs) for the spectral measures of classical random matrix models such as the Gaussian Orthogonal, Unitary and Symplectic ensembles. Their analysis is based on the description of the joint law of the eigenvalues as a {\em continuous log-gas} (sometimes called {\em Coulomb gas}). More recently, \cite{BGG} proposed a discrete analogue of continuous log-gases, called {\em discrete $\beta$-ensembles} and the goal of this paper is to prove an LDP for the spectral measures corresponding to these models. 

In Section \ref{Section1.1} we recall the definition of the continuous log-gas on $\mathbb{R}$ and some of the results regarding its global large deviations. In Section \ref{Section1.2} we formulate our discrete setup and state our main results. 

%
\subsection{Continuous setting}\label{Section1.1}
Let $\Delta$ be a closed interval in $\mathbb{R}$, $N \in \mathbb{N}$, $\beta > 0$ and $V: \Delta \rightarrow \mathbb{R}$ be a continuous function. A {\em continuous log-gas} is a probability distribution on $\Delta^N$, whose density is 
\begin{equation}\label{clg}
\rho(x_1, \dots, x_N) = \frac{1}{Z_N} {\bf 1}\{x_1 > x_2 >  \cdots > x_N\} \prod_{1\le i<j\le N}(x_{i}-x_j)^{\beta}\prod_{i=1}^{N}\exp\left(-\frac{\beta N}{2} V(x_i) \right),
\end{equation}	
and $Z_N$ is a normalization constant. If $\Delta$ is compact the above measure is well-defined, and if $\Delta$ is unbounded one needs to assume that $V(x)$ grows sufficiently fast as $|x| \rightarrow \infty$ so that
\begin{equation}\label{clgPF}
Z_N := \int_{\Delta^N} {\bf 1} \{ x_1 > x_2 >  \cdots > x_N \} \prod_{1\le i<j\le N}(x_{i}-x_j)^{\beta}\prod_{i=1}^{N}\exp\left(-\frac{\beta N}{2} V(x_i)\right) dx_i < \infty.
\end{equation}	
The quantity $Z_N$ is usually referred to as the {\em partition function} of the model, the parameter $\beta$ is called the {\em inverse temperature} and $V(x)$ is called the {\em potential}.

The study of continuous log-gases for general $\beta > 0$ and potentials $V(x)$ is a rich subject with many important connections to different branches of mathematics. For example, when $\Delta = \mathbb{R}$, $V(x) = x^2$ and $\beta=1,2,$ and $4$, (\ref{clg}) is the joint density of the (ordered) eigenvalues of random matrices from the Gaussian Orthogonal, Unitary and Symplectic ensembles \cite{agz}. We refer the interested reader to the monographs \cite{agz,forrester,mehta} for additional background and a textbook treatment of the classical results on continuous log-gases.

A general question of interest asks about the limiting global distribution of the $x_i$'s as $N \rightarrow \infty$, i.e. about the convergence of the {\em empirical measures}
\begin{equation}\label{S1EM}
\mu_N = \frac{1}{N} \sum_{i = 1}^N \delta_{x_i},
\end{equation}
with $(x_1,\dots, x_N)$ distributed according to (\ref{clg}). Note that $\mu_N$ are random variables that take value in the space $\mathcal{M}(\Delta)$ of probability measures on $\Delta$, which we equip with the usual weak topology. It is known under general assumptions on the potential $V$ that the sequence $\{ \mu_N\}_{N \geq 1}$ converges almost surely to a deterministic probability measure $\me$, called the {\em equilibrium measure}, which is characterized as the unique minimizer of the weighted energy integral 
\begin{equation}\label{S1EF}
\begin{split}
&E_V(\mu) = \iint_{\mathbb{R}^2} k_V(x,y) \mu(dx) \mu(dx), \mbox{ over } \mu \in \mathcal{M}(\Delta), \mbox{ where } \\
& k_V(x,y) = \log |x - y|^{-1} + \frac{1}{2} V(x) + \frac{1}{2} V(y).
\end{split}
\end{equation} 
In fact, a stronger statement in this direction says that $\{ \mu_N\}_{N \geq 1}$ satisfy a large deviation principle on $\mathcal{M}(\Delta)$ with speed $N^2$ and good rate function $I_V(\mu) = (\beta/2) \cdot \left [E_V(\mu) - E_V(\me) \right]$. This result was first established by Ben Arous and Guionnet when $\Delta= \mathbb{R}$, $\beta = 2$ and $V(x) = x^2/2$ in \cite{BG97}. For the case of a general potential $V(x)$, which grows faster than a linear multiple of $\log|x|$ near $\pm \infty$, proofs of the LDP can be found in increasing generality as \cite[Theorem 5.4.3]{HP00},\cite[Theorem 2.6.1]{agz} and \cite[Theorem 1.1]{Hardy}. The LDP of the measures in (\ref{clg}) has also been established when the potential $V(x)$ in (\ref{clg}) is allowed to vary with $N$ in such a way that 
\begin{equation}\label{FeGrowth}
V_N(x) \geq (1 + \xi) \log (1 + x^2) \mbox{ for all $|x| \geq T$ and $N \geq 1$,} 
\end{equation}
where $\xi, T > 0$ are fixed, the functions $V_N$ are continuous and converge uniformly over compact sets of $\Delta$ to a function $V$ -- see \cite[Theorem 2.1]{fe}. 

The above few paragraphs aimed to give a brief overview of the global large deviation problem for continuous log-gases and summarize some of the main results that are available. We next turn to the discrete setup we investigate in the present paper.

%
\subsection{Discrete setting and main results}\label{Section1.2} In this article we consider a discrete analogue of (\ref{clg}), which was introduced in \cite{BGG}. To define the model we begin with some necessary definitions and notation. Let $\theta > 0, N \in \mathbb{N}$ and $a_N \in \mathbb{Z} \cup \{ - \infty\}$, $b_N \in \mathbb{Z} \cup \{ \infty \}$ with $a_N \leq b_N$. We set
\begin{equation}\label{GenState}
\begin{split}
&\mathbb{Y}_N(a_N, b_N) = \{  (\lambda_1, \dots, \lambda_N)\in \mathbb{Z}^N:   a_N \leq \lambda_N \leq \cdots \leq \lambda_1 \leq b_N  \}, \\
&\mathbb{W}^{\theta}_{N}(a_N, b_N)  = \{ (\ell_1, \dots, \ell_N):  \ell_i = \lambda_i + (N - i)\cdot\theta, \mbox{ with } (\lambda_1, \dots, \lambda_N) \in \mathbb{Y}_N(a_N, b_N) \}.
\end{split}
\end{equation}
We interpret $\ell_i$'s as locations of particles. If $\theta = 1$ then all particles live on the integer lattice, while for general $\theta$ the particle of index $i$ lives on the shifted lattice $\mathbb{Z} +(N - i) \cdot \theta$.

We define a probability measure $\mathbb{P}_N^{\theta}$ on $\mathbb{W}^{\theta}_{N}(a_N, b_N)$ through
\begin{equation}\label{PDef}
\mathbb{P}^{\theta}_N(\ell_1, \dots, \ell_N) := \frac1{Z_N} \prod_{1 \leq i < j \leq N} Q_{\theta}(\ell_i-\ell_j)  \prod_{i = 1}^N e^{- \theta N V_N(\ell_i/N)}, \hspace{5mm}  Q_{\theta}(x):=\frac{\Gamma(x + 1)\Gamma(x + \theta)}{\Gamma(x)\Gamma(x +1-\theta)}.
\end{equation}
Here $\Gamma$ is the Euler gamma function, $V_N$ is a continuous functions on $\mathbb{R}$ and
\begin{equation}\label{DBEPF}
Z_N:= \sum_{ (\ell_1, \dots, \ell_N) \in\mathbb{W}^{\theta}_{N}(a_N, b_N)} \prod_{1 \leq i < j \leq N} Q_{\theta}(\ell_i-\ell_j)  \prod_{i = 1}^N e^{- \theta N V_N(\ell_i/N)}
\end{equation}
is a normalization constant (also called the partition function). If $b_N - a_N < \infty$ then $Z_N < \infty$ and (\ref{PDef}) is a well-defined probability measure. If $b_N - a_N = \infty$ then one needs to assume that $V_N$ grows sufficiently fast as $|x| \rightarrow \infty$ to ensure that $Z_N < \infty$. We show in Lemma \ref{WellDef} that as long as 
\begin{equation}\label{VNgrowth}
\liminf_{|x| \rightarrow \infty} N \theta V_N(x)- ( \theta'_N + (N-1) \theta) \log (1 +x^2) > -\infty \mbox{ for some $\theta'_N > 1/2$},
\end{equation}
we have that $Z_N < \infty$ and hence (\ref{PDef}) is a well-defined probability measure.

The measures in (\ref{PDef}) are called {\em discrete $\beta$-ensembles} and were introduced in \cite{BGG} as discrete analogues of (\ref{clg}) and extensively studied. To see why one might consider (\ref{PDef}) as a discrete version of (\ref{clg}), note that $Q_{\theta}(\ell_i-\ell_j) \sim (\ell_i-\ell_j)^{2\theta}$ as $\ell_i - \ell_j \rightarrow \infty$ (see Lemma \ref{InterApprox}), which agrees with \eqref{clg} for $\beta=2\theta$. 

It is worth mentioning that there are other discrete analogues of (\ref{clg}); for example, one can consider the following measure on $\mathbb{W}^{1}_{N}(a_N, b_N)  $ as in (\ref{GenState})
\begin{equation}\label{Coulomb}
\begin{split}
& \mathbb{P}(\ell_1, \dots, \ell_N)  \propto \prod_{1\leq i<j \leq N} |\ell_i-\ell_j|^\beta \prod_{i = 1}^N e^{- \theta N V_N(\ell_i/N)}.
\end{split}
\end{equation}
When $\theta = 2\beta = 1$ the functional equation $\Gamma(z+1) = z \Gamma(z)$ gives $Q_{\theta}(x) = x^2$ so that the measures in (\ref{PDef}) and (\ref{Coulomb}) are the same. For general $\theta = 2\beta$, the measures in (\ref{PDef}) and (\ref{Coulomb}) are {\em different}, since for the former $\ell_i \in \mathbb{Z} +(N - i) \cdot \theta$, while for the latter $\ell_i \in \mathbb{Z}$ for $i = 1, \dots, N$. While both (\ref{PDef}) and (\ref{Coulomb}) are reasonable discretizations of (\ref{clg}), there is a much higher interest in the former coming from connections to integrable probability; specifically, uniform random tilings, $(z,w)$-measures and Jack measures --- see \cite[Section 1]{BGG} for more details. We also mention that (\ref{Coulomb}) appears to lack the integrability that is present in (\ref{PDef}). In particular, while both global and edge fluctuations have been successfully obtained for (\ref{PDef}) in \cite{BGG} and \cite{huang}, respectively, neither has been established for (\ref{Coulomb}), except when $\theta = 1$.\\

Similarly to the continuous setting, we are interested in obtaining a large deviation principle for the empirical measures
\begin{equation}\label{S1DEM}
\mu_N = \frac{1}{N} \sum_{i = 1}^N \delta_{\ell_i/N},
\end{equation}
as $N \rightarrow \infty$. As before, we view $\mu_N$ as random variables taking values in $\mathcal{M}(\mathbb{R}^n)$ with $n = 1$ (here $\mathcal{M}(\mathbb{R}^n)$ is the space of probability measures on $\mathbb{R}^n$, equipped with the weak topology). We mention that the weak topology on $\mathcal{M}(\mathbb{R}^n)$ is compatible with the L{\'e}vy metric $d_n$ defined for two measures $\mu, \nu \in \mathcal{M}(\mathbb{R}^n)$ by
\begin{equation}\label{LevyMetric}
d_n(\mu,\nu) = \inf \{\delta: \mu(F) \leq \nu (F^{\delta}) + \delta \hspace{2mm} \forall F \subseteq \mathbb{R}^n \mbox{ closed}\}, \mbox{ where }
\end{equation}
$F^{\delta} = \{x \in \mathbb{R}^n: \inf_{y \in F} \|x - y\|_n \leq \delta \}$ and $\| \cdot \|_n$ is the Euclidean distance on $\mathbb{R}^n$. Also $(\mathcal{M}( \mathbb{R}^n), d_n)$ is a Polish space, \cite[Theorem C.8]{agz}.

In order to formulate our large deviations theorem we require some additional notation that we now present. If $\Delta \subseteq \mathbb{R}$ is a closed interval, we let $\mathcal{M}(\Delta)$ denote the subset of $\mathcal{M}(\mathbb{R})$ consisting of probability measures $\mu$, whose support $\mathsf{Supp}(\mu)$ is contained in $\Delta$. If $\lambda(\Delta) \geq \theta$ (here $\lambda$ denotes the Lebesgue measure on $\mathbb{R}$) we let $\mathcal{M}_{\theta}(\Delta)$ denote the subset of $\mathcal{M}(\mathbb{R})$ consisting of probability measures $\mu$ that are absolutely continuous with respect to $\lambda$, are supported in $\Delta$ and have density that is bounded by $\theta^{-1}$. The assumption that $\lambda(\Delta) \geq \theta$  ensures that $\mathcal{M}_{\theta}(\Delta)$  is non-empty. The next statement summarizes the relevant facts we require about the function $E_V(\mu)$ from (\ref{S1EF}) when restricted to the set $\mathcal{M}_{\theta}(\Delta)$.

\begin{theorem}\label{ThmFunct} Let $\theta > 0$, $\Delta \subseteq \mathbb{R}$ be a closed interval such that $\lambda(\Delta) \geq \theta$ and $V: \mathbb{R} \rightarrow \mathbb{R}$ be a continuous function satisfying 
\begin{equation}\label{S1VGrowth}
\liminf_{|x| \rightarrow \infty} V(x) - \log(1 + x^2) > -\infty.
\end{equation}
Assume also that $k_V(x,y)$ and $E_V(\mu)$ are as in  (\ref{S1EF}). Then the following statements hold.
\begin{enumerate}
\item For each $\mu \in \mathcal{M}(\mathbb{R})$ the integral $E_V(\mu)$ is well-defined and $E_{V}(\mu) \in (-\infty, \infty]$. 
\item $F_V^{\theta} := \inf_{\mu \in \mathcal{M}_{\theta}(\Delta)} E_V(\mu)$ is finite and there is a unique $\me^{\theta} \in \mathcal{M}_{\theta}(\Delta)$ with $E_V(\me^{\theta}) = F^{\theta}_V$.
\item The function
\begin{equation}\label{S1Rate}
I_V^{\theta}(\mu):= \begin{cases} \theta ( E_V(\mu)  - F_V^{\theta}) &\mbox{ for $\mu \in \mathcal{M}_{\theta}(\Delta)$} \\ \infty & \mbox{ for $\mu \in \mathcal{M}(\mathbb{R}) \setminus  \mathcal{M}_{\theta}(\Delta)$ } \end{cases}
\end{equation}
is a good rate function (GRF) on $\mathcal{M}(\mathbb{R})$.
\item If $\mu \in \mathcal{M}_{\theta}(\Delta)$, $E_V(\mu) < \infty$ and there is a constant $c \in \mathbb{R}$ such that 
\begin{equation*}
\begin{split}
&\int_\R \left(\log|x-y|^{-1}+\frac{1}{2}\log( 1+x^2) \right)\mu(dx) + \frac{1}{2} V(y) \geq c \mbox{, for Lebesgue a.e. $y \in \mathsf{Supp}( \theta^{-1} \lambda - \mu)$} \cap \Delta, \\
&\int_\R \left(\log|x-y|^{-1}+\frac{1}{2}\log( 1+x^2) \right)\mu(dx) + \frac{1}{2} V(y) \leq c \mbox{, for Lebesgue a.e. $y \in \mathsf{Supp}(\mu)$}, 
\end{split}
\end{equation*}
then $\mu = \me^{\theta}$. Here $\mathsf{Supp}$ denotes the support of a measure and $\theta^{-1} \lambda$ denotes the rescaled by $\theta^{-1}$ Lebesgue measure on $\mathbb{R}$. 
\item If $V(x)$ satisfies the stronger, compared to (\ref{S1VGrowth}), growth condition 
\begin{equation}\label{S1VGrowthStrong}
\lim_{|x| \rightarrow \infty} V(x) - \log(1 + x^2) = \infty,
\end{equation}
then the measure $\me^{\theta}$ from (2) above has compact support.
\end{enumerate}
\end{theorem}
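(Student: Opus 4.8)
\emph{Proof plan.} The strategy is to remove the non-compactness of $\mathbb{R}$ with the conformal change of variables $x\mapsto\alpha(x):=2\arctan x$, a homeomorphism of $\mathbb{R}$ onto $(-\pi,\pi)$ and of $\mathbb{R}\cup\{\infty\}$ onto the circle $\mathbb{T}:=\mathbb{R}/2\pi\mathbb{Z}$. Set $W(x):=V(x)-\log(1+x^2)$; by \eqref{S1VGrowth} $W$ is continuous and bounded below on $\mathbb{R}$, and by \eqref{S1VGrowthStrong} it tends to $+\infty$ as $|x|\to\infty$. Using $(1+x^2)(1+y^2)-(x-y)^2=(1+xy)^2$ one has
\[
k_V(x,y)=\mathsf{p}(x,y)+\tfrac12W(x)+\tfrac12W(y),\qquad \mathsf{p}(x,y):=\log\frac{\sqrt{(1+x^2)(1+y^2)}}{|x-y|}\ \ge\ 0,
\]
and an elementary trigonometric identity gives $\mathsf{p}(x,y)=\log\big|\sin\tfrac{\alpha(x)-\alpha(y)}{2}\big|^{-1}$. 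Hence, pushing $\mu\in\mathcal{M}(\mathbb{R})$ forward by $\alpha$ to $\widetilde\mu\in\mathcal{M}(\mathbb{T})$,
\[
E_V(\mu)=\iint_{\mathbb{T}^2}\log\big|\sin\tfrac{\alpha-\beta}{2}\big|^{-1}\widetilde\mu(d\alpha)\widetilde\mu(d\beta)+\int_{\mathbb{T}}\widetilde W\,d\widetilde\mu,
\]
the external-field logarithmic energy on the compact circle, with a nonnegative l.s.c.\ kernel that equals $+\infty$ on the diagonal and with $\widetilde W$ l.s.c.\ and bounded below. In particular $k_V\ge-C$ on $\mathbb{R}^2$, so $E_V(\mu)$ is the integral of a bounded-below l.s.c.\ function and hence a well-defined element of $(-\infty,\infty]$, proving (1).

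For (2)--(3) I work on the compact space $\mathcal{M}(\mathbb{T})$. The energy $\mu\mapsto E_V(\mu)$ is weakly lower semicontinuous (the logarithmic kernel and $\widetilde W$ are l.s.c.\ and bounded below). Finiteness of $F_V^\theta$ follows from (1) for the lower bound and, for the upper bound, by testing against $\theta^{-1}\lambda$ restricted to a subinterval of $\Delta$ of length $\theta$ -- a compactly supported measure of density $\le\theta^{-1}$, hence of finite logarithmic energy and finite $V$-integral. Now take $(\mu_n)$ in $\mathcal{M}_\theta(\Delta)$ with $\sup_n E_V(\mu_n)<\infty$ (a minimizing sequence, or a sequence in a sublevel set of $I_V^\theta$); by compactness pass to a weak subsequential limit $\widetilde\nu\in\mathcal{M}(\mathbb{T})$. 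If $\widetilde\nu$ charged the identified point $[\pi]$ -- i.e.\ if mass escaped to $\infty$ -- then $\widetilde\nu$ would have an atom and hence infinite logarithmic energy, contradicting the uniform bound; so $\widetilde\nu$ corresponds to some $\nu\in\mathcal{M}(\mathbb{R})$, and passing the conditions $\mu_n\le\theta^{-1}\lambda$ and $\mathsf{Supp}(\mu_n)\subseteq\Delta$ to the limit gives $\nu\in\mathcal{M}_\theta(\Delta)$, while lower semicontinuity gives $E_V(\nu)\le\liminf_n E_V(\mu_n)$. This yields both a minimizer $\me^\theta$ and the weak compactness of every sublevel set of $I_V^\theta$; since $\mathcal{M}_\theta(\Delta)$ is weakly closed in $\mathcal{M}(\mathbb{R})$, $I_V^\theta$ is l.s.c.\ there too, giving (3). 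Uniqueness of $\me^\theta$ (completing (2)) is strict convexity of $E_V$ on $\{E_V<\infty\}$: for $\mu_1\ne\mu_2$ of finite energy, $\iint k_V\,d(\mu_1-\mu_2)\,d(\mu_1-\mu_2)=\iint\mathsf{p}\,d(\mu_1-\mu_2)\,d(\mu_1-\mu_2)$ (the $W$-terms cancel as $\mu_1-\mu_2$ has total mass $0$), which by $\log|\sin\tfrac{\theta}{2}|^{-1}=\log2+\sum_{n\ge1}n^{-1}\cos n\theta$ equals $\sum_{n\ge1}n^{-1}|\widehat{(\widetilde\mu_1-\widetilde\mu_2)}(n)|^2>0$, so $E_V\big(\tfrac{\mu_1+\mu_2}{2}\big)<\tfrac12E_V(\mu_1)+\tfrac12E_V(\mu_2)$.

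For (4), set $\Phi_\sigma(y):=\int\mathsf{p}(x,y)\,d\sigma(x)+\tfrac12W(y)$, which is bounded below and, when $\iint\mathsf{p}\,d\sigma\,d\sigma<\infty$, finite $\sigma$-a.e.; a direct computation shows the left-hand sides of the two displayed inequalities equal $\Phi_\mu(y)$, so the hypotheses read $\Phi_\mu\ge c$ $\lambda$-a.e.\ on $A:=\mathsf{Supp}(\theta^{-1}\lambda-\mu)\cap\Delta$ and $\Phi_\mu\le c$ $\lambda$-a.e.\ on $\mathsf{Supp}(\mu)$. Put $\nu:=\me^\theta$. Since $t\mapsto E_V((1-t)\nu+t\mu)$ is a polynomial in $t$ minimized at $t=0$ over $[0,1]$ inside the convex set $\mathcal{M}_\theta(\Delta)$, its derivative there is $\ge0$, i.e.\ $\mathcal{J}:=\int\Phi_\nu\,d(\mu-\nu)\ge0$. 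Splitting $\Delta$ (up to a $\lambda$-null set) into $A$ and the saturated set $B:=\Delta\setminus A$, on which $\mu=\theta^{-1}\lambda\ge\nu$ and $B\subseteq\mathsf{Supp}(\mu)$, and using $\Phi_\mu\ge c$ $\nu$-a.e.\ on $A$, $\Phi_\mu=c$ $\mu$-a.e.\ on $A$, $\Phi_\mu\le c$ a.e.\ on $B$, one finds (all integrals finite since $\iint\mathsf{p}\,d\mu^2,\iint\mathsf{p}\,d\nu^2$ and $\int W\,d\mu,\int W\,d\nu$ are)
\[
\mathcal{I}:=\int\Phi_\mu\,d(\nu-\mu)\ \ge\ c\big(\nu(A)+\nu(B)-\mu(A)-\mu(B)\big)=c\big(\nu(\Delta)-\mu(\Delta)\big)=0.
\]
On the other hand $\mathcal{I}+\mathcal{J}=2\iint\mathsf{p}\,d\mu\,d\nu-\iint\mathsf{p}\,d\mu\,d\mu-\iint\mathsf{p}\,d\nu\,d\nu=-\iint\mathsf{p}\,d(\mu-\nu)\,d(\mu-\nu)\le0$ by positive-definiteness; hence that quadratic form vanishes and $\mu=\nu=\me^\theta$.

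For (5), the standard first-variation argument for the minimizer $\nu=\me^\theta$ -- transport an infinitesimal amount of mass from a set of positive $\nu$-measure to a bounded set on which $\nu<\theta^{-1}\lambda$ with a uniform gap (such a set has positive $\lambda$-measure when $\lambda(\Delta)>\theta$; if $\lambda(\Delta)=\theta$ then $\Delta$ is bounded and there is nothing to prove) -- yields the Euler--Lagrange inequality $\Phi_\nu\le F$ $\nu$-a.e.\ for a finite $F$. But $\Phi_\nu(y)=\int\mathsf{p}(x,y)\,d\nu(x)+\tfrac12W(y)\ge\tfrac12W(y)$ because $\mathsf{p}\ge0$, and $W(y)\to+\infty$ as $|y|\to\infty$ by \eqref{S1VGrowthStrong}; hence $\Phi_\nu(y)>F$ for all large $|y|$, so $\mathsf{Supp}(\nu)\subseteq\{W\le2F\}$ is bounded, i.e.\ compact. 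The step I expect to be the crux is the no-escape-of-mass argument underlying (2)--(3): under only \eqref{S1VGrowth} the naive energy lower bound does not prevent a low-energy sequence from leaking mass to infinity, and the conformal transform is precisely what resolves this by turning such leakage into an atom at $[\pi]$, which the diagonal blow-up of the logarithmic kernel forbids -- provided one checks carefully that the transform is a homeomorphism of the relevant measure spaces and that lower semicontinuity genuinely transports the kernel's blow-up to weak limits.
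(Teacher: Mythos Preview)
Your approach is correct and is essentially the same as the paper's: both compactify $\mathbb{R}$ to a circle (the paper via the inverse stereographic projection $T$, you via $\alpha=2\arctan$, which are the same up to parametrization since your identity $|\sin\tfrac{\alpha(x)-\alpha(y)}{2}|=|x-y|/\sqrt{(1+x^2)(1+y^2)}$ is exactly the paper's $\|T(x)-T(y)\|_2$), then use lower semicontinuity and positive-definiteness of the circular logarithmic kernel to obtain existence, uniqueness, and the GRF property, with the no-escape-of-mass step you flag as the crux handled identically in both. The only cosmetic difference is in part~(4): the paper bounds $E_V(\me^\theta)-E_V(\mu)\ge 0$ directly and then invokes uniqueness of the minimizer, whereas you pair the first-variation inequality $\mathcal{J}\ge 0$ at $\me^\theta$ with the hypothesized inequalities on $\mu$ via the identity $\mathcal{I}+\mathcal{J}=-\iint\mathsf{p}\,d(\mu-\nu)^2\le 0$; both arguments are standard and equivalent.
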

\begin{remark}\label{RemFunct} If $V(x)$ is continuous and satisfies (\ref{S1VGrowth}) then we note from the inequality 
\begin{equation}\label{SQLog}
|x-y| \leq \sqrt{1 + x^2} \sqrt{1+y^2} \mbox{ for all $x,y \in \mathbb{R}$ }
\end{equation}
that $k_V(x,y)$ is lower bounded on $\mathbb{R}$ and so $E_V(\mu)$ is well-defined for every $\mu \in \mathcal{M}(\mathbb{R})$ and $E_V(\mu) \in (-\infty, \infty]$. Also, if $V(x)$ is continuous and satisfies (\ref{S1VGrowthStrong}) or if $\Delta$ is bounded then all the statements in Theorem \ref{ThmFunct} follow from \cite[Theorem 2.1]{ds97} and its proof, with the exception of part (4) where one needs to further assume that $\mu \in \mathcal{M}_{\theta}(\Delta)$ is compactly supported if $\Delta$ is unbounded. The non-trivial (and new) parts of Theorem \ref{ThmFunct} are showing that statements (2), (3) and (4) all hold when $\Delta$ is unbounded and $V(x)$ satisfies the growth condition (\ref{S1VGrowth}), rather than (\ref{S1VGrowthStrong}). We establish these three statements in Section \ref{Section2.3}. Our proof of (2) and (3) is based on adapting the arguments in the proof of \cite[Theorem 1.1]{Hardy} (which deals with the case when $\mathcal{M}_{\theta}(\Delta)$ is replaced with $\mathcal{M}(\Delta)$) and our proof of (4) is based on adapting the arguments in the proof of \cite[Theorem 2.1(d)]{ds97}.
\end{remark}

The function $I_V^{\theta}(\mu)$ from (\ref{S1Rate}) is the rate function in our large deviation principle, see Theorem \ref{ThmMain}. We next explain how we scale the parameters in the definition of $\mathbb{P}_N^{\theta}$ in (\ref{PDef}) as $N \rightarrow \infty$.\\

{\bf \raggedleft Assumption 1.} Let $a \in [-\infty, \infty)$ and $b \in (-\infty, \infty]$ be such that $a \leq b$. We assume that $a_N \in \mathbb{Z} \cup \{ - \infty\}$, $b_N \in \mathbb{Z} \cup \{ \infty \}$ with $a_N \leq b_N$ satisfying $\lim_{N \rightarrow \infty} N^{-1} a_N = a$ and $\lim_{N \rightarrow \infty} N^{-1} b_N = b$. We also denote $\Delta = [a, b + \theta] \cap \mathbb{R}$. \\

{\bf \raggedleft Assumption 2.} We assume that $V: \mathbb{R} \rightarrow \mathbb{R}$ is a continuous function, which satisfies (\ref{S1VGrowth}). We also assume that $V_N(x)$ is a sequence of continuous functions on $\mathbb{R}$ such that for each $N \in \mathbb{N}$ the inequality in (\ref{VNgrowth}) holds. As mentioned earlier, the inequality in (\ref{VNgrowth})  implies that $\mathbb{P}_N^{\theta}$ in (\ref{PDef}) is well-defined for any $a_N \in \mathbb{Z} \cup \{ - \infty\}$, $b_N \in \mathbb{Z} \cup \{ \infty \}$, see Lemma \ref{WellDef}. We further assume that at least one of the following two conditions hold:
\begin{enumerate}[label=(\alph*)]
\item There exist $\epsilon_N \geq 0$ such that $|V_N(x) - V(x)| \leq \epsilon_N $ for all $N \geq 1, x \in \mathbb{R}$ and $\lim_{N \rightarrow \infty} \epsilon_N = 0$.
\item There exist $\xi, T > 0$ such that (\ref{FeGrowth}) holds and for each compact set $K \subseteq \mathbb{R}$ we have $\lim_{N \rightarrow \infty} \sup_{x \in K} |V_N(x) - V(x)| = 0$.
\end{enumerate}
In words, Assumption 2 states that either $V_N$ converge uniformly to $V(x)$ on $\mathbb{R}$, which satisfies the mild growth condition (\ref{S1VGrowth}), or the $V_N$ converge to $V$ uniformly over compact sets, but have the uniform (in $N$) growth condition in (\ref{FeGrowth}). The only other aspect of Assumption 2, is that $V_N$ satisfy (\ref{VNgrowth}) so that the measure $\mathbb{P}_N^{\theta}$ in (\ref{PDef}) is well-defined.

With the above notation in place we can present the main result of the paper.
\begin{theorem}\label{ThmMain} Suppose that $\theta > 0$ and $\mathbb{P}_N^{\theta}$ is a sequence of probability measures on $\mathbb{W}^{\theta}_{N}(a_N, b_N) $ as in (\ref{GenState}) of the form (\ref{PDef}), where $a_N, b_N$ satisfy the conditions in Assumption 1 and $V_N$ satisfy the conditions in Assumption 2. If $\mu_N$ are as in (\ref{S1DEM}) for $(\ell_1, \dots, \ell_N)$ distributed according to $\mathbb{P}_N^{\theta}$, then the sequence of measures in $\mathcal{M}(\mathbb{R})$, given by the laws of $\mu_N$, satisfies an LDP with speed $N^2$ and good rate function $I_V^{\theta}(\mu)$ as in (\ref{S1Rate}), with $\Delta$ as in Assumption 1.
\end{theorem}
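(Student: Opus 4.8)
The plan is to establish the LDP via the standard two-step route: first prove a weak LDP (matching upper and lower bounds on balls in the Lévy metric) together with exponential tightness, which upgrades the weak LDP to a full LDP by \cite[Lemma 1.2.18]{agz}. The key object is the (unnormalized) energy functional evaluated on the empirical measure. Writing $\mathbb{P}_N^\theta$ as in \eqref{PDef}, one takes logarithms to get, up to the additive constant $-\log Z_N$,
\begin{equation*}
\log \Bigl( \prod_{i<j} Q_\theta(\ell_i - \ell_j) \prod_i e^{-\theta N V_N(\ell_i/N)} \Bigr) = \sum_{i<j} \log Q_\theta(\ell_i - \ell_j) - \theta N \sum_i V_N(\ell_i/N).
\end{equation*}
Using Lemma \ref{InterApprox} (the asymptotics $Q_\theta(x) \sim x^{2\theta}$, and more precisely a two-sided bound relating $\log Q_\theta(x)$ to $2\theta \log|x|$ with controlled error), the first sum is comparable to $2\theta \sum_{i<j} \log|\ell_i - \ell_j| = \theta N^2 \iint_{x \neq y} \log|x-y| \,\mu_N(dx)\mu_N(dy) + (\text{lower order})$, and the second sum is $\theta N^2 \int V_N \, d\mu_N$. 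Hence, informally, $\mathbb{P}_N^\theta$ has density of order $\exp(-\theta N^2 [E_{V}(\mu_N) + o(1)])$ against a reference measure, which is exactly the $N^2$-speed, rate $\theta(E_V(\mu) - F_V^\theta)$ heuristic; the factor $F_V^\theta$ emerges from the partition function asymptotics $N^{-2} \log Z_N \to -\theta F_V^\theta$.

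\textbf{Main steps, in order.}
\emph{(i) Reduction and discreteness constraint.} The crucial structural feature distinguishing the discrete from the continuous case is that the particles $\ell_i$ are \emph{separated}: consecutive $\ell_i$ differ by at least $\theta$, since $\ell_i - \ell_{i+1} = \lambda_i - \lambda_{i+1} + \theta \geq \theta$. Consequently any limit point of $\mu_N$ must lie in $\mathcal{M}_\theta(\Delta)$ — its density cannot exceed $\theta^{-1}$ — which explains why the rate function is $+\infty$ off $\mathcal{M}_\theta(\Delta)$. This separation also tames the logarithmic singularity of $Q_\theta$: the diagonal terms are bounded below, so the energy sum is well-controlled from below with no need for the continuous-case regularization by convolution. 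One still needs a quantitative lower bound on $\log Q_\theta$ near $0$ (the arguments $\ell_i-\ell_j$ are $\geq \theta$ apart in general but can be small when $\theta$ is small), supplied by Lemma \ref{InterApprox}.
\emph{(ii) Lower bound.} Fix $\mu \in \mathcal{M}_\theta(\Delta)$ with $E_V(\mu) < \infty$. Construct a near-deterministic configuration: partition $\Delta$ (or a large truncation) and place the $\ell_i$ on the lattice $\mathbb{Z} + (N-i)\theta$ so that $\mu_N$ approximates $\mu$ in $d_1$; the density bound $\theta^{-1}$ on $\mu$ is precisely what makes such a packing possible. Lower bound the probability of a Lévy-ball around $\mu$ by the (single) weight of configurations near this one, expand $\log Q_\theta$ via Lemma \ref{InterApprox}, and use continuity/convergence of $V_N$ (Assumption 2) plus lower semicontinuity to get $\liminf N^{-2}\log \mathbb{P}_N^\theta(B(\mu,\delta)) \geq -\theta E_V(\mu) + N^{-2}\log Z_N$, then let $\delta \to 0$.
\emph{(iii) Upper bound.} For a ball $B(\nu,\delta)$, bound $\prod_{i<j} Q_\theta(\ell_i-\ell_j) \prod_i e^{-\theta N V_N(\ell_i/N)}$ from above on this ball using the upper estimate on $\log Q_\theta$ and a truncation of the logarithm near the diagonal (the separation of particles makes the error from truncating $\log|x-y|$ near $0$ negligible, uniformly); combined with the weak-$*$ lower semicontinuity of $\mu \mapsto \iint_{|x-y|\geq \epsilon}\log|x-y|^{-1}\,d\mu\,d\mu + \int V\,d\mu$ and a counting bound on $|\mathbb{W}_N^\theta(a_N,b_N) \cap B(\nu,\delta)|$ (which is $e^{o(N^2)}$), one gets $\limsup N^{-2}\log \mathbb{P}_N^\theta(B(\nu,\delta)) \leq -\theta \inf_{B(\nu,\delta)\cap \mathcal{M}_\theta(\Delta)} E_V + N^{-2}\log Z_N$; shrink $\delta$.
\emph{(iv) Partition function and exponential tightness.} Combining (ii) and (iii) applied to $\nu = \me^\theta$ pins down $\lim N^{-2}\log Z_N = -\theta F_V^\theta$. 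Exponential tightness at speed $N^2$ uses the growth condition: in case (a) the uniform bound $V_N \geq V - \epsilon_N$ with $V$ superlogarithmic-ish via \eqref{S1VGrowth} suffices after noticing that $\mathcal{M}_\theta(\Delta)$ forces mass near a compact set (when $\Delta$ bounded this is automatic); in case (b) one uses \eqref{FeGrowth} directly to show configurations with a particle far out are exponentially ($N^2$) suppressed. Then \cite[Lemma 1.2.18]{agz} promotes the weak LDP to the full LDP with GRF $I_V^\theta$, which is a GRF by Theorem \ref{ThmFunct}(3).

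\textbf{The main obstacle} I expect is handling the unbounded-$\Delta$, slow-growth regime \eqref{S1VGrowth} cleanly — this is exactly the novelty flagged in Remark \ref{RemFunct}. Under only \eqref{S1VGrowth} the equilibrium measure $\me^\theta$ may have unbounded support, so there is no compact set absorbing all the action, and the interaction energy $\iint \log|x-y|^{-1}$ is only bounded below after the $\tfrac12\log(1+x^2)$ compensation in $k_V$; the upper-bound argument must carry this compensation through the truncation-of-$\log$ step without losing control at infinity, and exponential tightness must be extracted from the borderline bound \eqref{S1VGrowth} rather than the comfortable \eqref{S1VGrowthStrong}. In case (a) this is delicate because $V_N$ inherits only the weak growth of $V$; the argument presumably mirrors \cite[Theorem 1.1]{Hardy} and \cite[Theorem 2.1]{fe}, transplanting their compactification-via-$\tfrac12\log(1+x^2)$ trick (and, in case (b), their $N$-dependent growth bookkeeping) into the discrete setting, where the saving grace is the automatic particle separation that removes the need for the delicate mollification used in the continuous proofs. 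A secondary technical point is making the passage $\sum_{i<j}\log Q_\theta(\ell_i-\ell_j) \to \theta N^2 \iint \log|x-y|\,d\mu_N d\mu_N$ uniform enough in the regime where $\ell_i-\ell_j$ ranges over all scales from $\theta$ to $N\cdot\mathrm{diam}(\Delta)$, which is precisely the content one must extract from Lemma \ref{InterApprox}.
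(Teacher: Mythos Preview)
Your proposal shares many correct ingredients with the paper --- the role of the separation $\ell_i - \ell_{i+1} \geq \theta$ in forcing limits into $\mathcal{M}_\theta(\Delta)$, the lower bound via explicit lattice configurations approximating a given $\mu \in \mathcal{M}_\theta(\Delta)$, the replacement of $\log Q_\theta$ by $2\theta\log|x|$ via Lemma~\ref{InterApprox}, and the partition-function asymptotics. The lower-bound outline in (ii) is essentially what the paper does in Steps~2--3 of its proof (with Lemmas~\ref{ConvL} and \ref{CompactL} supplying the energy convergence and the reduction to compactly supported $\mu$).

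The substantive divergence is your organizing framework in (iv): you propose to prove a weak LDP on $\mathcal{M}(\mathbb{R})$ and then upgrade via \emph{exponential tightness}. The paper deliberately avoids this route. Instead, it pushes the empirical measures forward to $\mathcal{M}(\mathcal{S})$ via the stereographic map $T$, proves the weak LDP upper bound there (Lemma~\ref{S3WLDP}), and exploits the \emph{compactness} of $\mathcal{M}(\mathcal{S})$ to get the full upper bound immediately by a finite-cover argument (Step~1 of the proof, following \cite[Theorem 4.1.1]{DZ}). The result is then pulled back to $\mathcal{M}(\mathbb{R})$ using that $T_*$ is a homeomorphism onto $\{\nu:\nu(\{\mathsf{np}\})=0\}$. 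No tightness statement is ever needed.

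This is not merely a stylistic reorganization: the paper explicitly remarks (following \cite[Remark~1.5]{Hardy}) that under only the borderline growth \eqref{S1VGrowth}/\eqref{VNgrowth} it is \emph{not clear how to establish exponential tightness directly}. Your step~(iv) identifies this as ``the main obstacle'' and gestures at the compactification trick as the fix, but the compactification is not a tool for \emph{proving} tightness --- it is a device for \emph{replacing} it. If you keep the weak-LDP-plus-tightness scaffolding and try to extract tightness from \eqref{S1VGrowth} alone in case~(a), you will likely get stuck: the potential may grow only like $\log(1+x^2)$, which is exactly the threshold at which the interaction can compensate, and configurations with mass escaping to infinity need not be $e^{-cN^2}$-suppressed. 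The paper's reorganization around $T_*$ is what makes the argument go through under these minimal hypotheses; your sketch would work cleanly under Assumption~2(b) or when $\Delta$ is bounded, but has a genuine gap under Assumption~2(a) with unbounded $\Delta$.
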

Theorem \ref{ThmMain} is proved in Section \ref{Section3.1} and is based on an adaptation of an elegant idea from \cite{Hardy}, which goes back to \cite{Hardy2}. The core of this idea is to map the measures $\mu_N$ in Theorem \ref{ThmMain} to ones on the circle $\mathcal{S} := \{ \vec{x} = (x_1, x_2) \in \mathbb{R}^2: x_1^2 + (x_2 - 1/2)^2 = 1 \}$, using the inverse stereographic projection $T$, that maps the one-point compactification of $\mathbb{C}$ to the Riemann sphere, and restricting it to $\mathbb{R}$. As explained in \cite[Remark 1.5]{Hardy}, the advantage of working with the space of probability measures on $\mathcal{S}$ is that the latter is compact in the weak topology. One consequence of this compactness is that it suffices to prove a weak LDP upper bound for the measures $T_*\mu_N$ (the pushforwards of $\mu_N$ by $T$), and as a result we obtain a strong LDP upper bound for $T_*\mu_N$, and consequently for $\mu_N$. Another consequence of the compactness is that it allows one to circumvent the necessity of establishing an exponential tightness property for $\mu_N$, which is required in classical works on LDPs for continuous and discrete log-gases \cite{agz, BG97, fe, HP00}. In fact, as mentioned in \cite[Remark 1.5]{Hardy}, it is not even clear how to directly establish the exponential tightness of $\mu_N$ under the weaker growth in (\ref{VNgrowth}). 

While the core idea of our work is similar to \cite{Hardy}, there are several new challenges that we face, which come from the discreteness of our models in (\ref{PDef}). For example, when we prove the weak LDP upper bound of $T_*\mu_N$ in Lemma \ref{S3WLDP}, the arguments in \cite{Hardy} only work under Assumption 2(a), and provide the correct rate function only on the set $T_*(\mathcal{M}_{\theta}(\Delta))$, see Section \ref{Section3.2.1}. To complete the proof under Assumption 2(b) as well, and also find the correct rate function on all of probability measures on $\mathcal{S}$, we adapt some of the ideas from \cite{fe,jo}, see Sections \ref{Section3.2.2} and \ref{Section3.2.3}. Obtaining the strong LDP upper bound for $\mu_N$ from Lemma \ref{S3WLDP} is done in Step 1 of the proof of Theorem \ref{ThmMain} in Section \ref{Section3.1}, and essentially uses the argument in \cite{Hardy}, which is inspired by \cite[Theorem 4.1.1]{DZ}. For the LDP lower bound, \cite{Hardy} relies on \cite[Theorem 2.6.1]{agz}, which is not applicable in our case due to the discreteness of our models. Consequently, we need to develop this part of the proof separately, for which we rely on some ideas from discrete log-gases \cite{fe,jo}, as well as a couple of technical lemmas -- Lemmas \ref{ConvL} and \ref{CompactL}.

To summarize, we have attempted to prove Theorem \ref{ThmMain} under the weakest possible conditions, when the intervals of support and the potentials $V_N$ for the measures $\mathbb{P}^{\theta}_N$ in (\ref{PDef}) are allowed to vary with $N$, and we assume as little as possible about them. Under Assumption 2(a) one can adapt the arguments from \cite{Hardy}, and under Assumption 2(b) one can adapt the arguments from \cite{fe,jo} to get the LDP upper bound, but we need to modify both types of arguments to account for the varying nature of our supports $\mathbb{W}_N^{\theta}(a_N,b_N)$ and potentials $V_N$. For the LDP lower bound, we appropriately modify the argument from \cite{Hardy}, which is for continuous log-gases to our discrete setting.

%
\subsection{Outline} The rest of the article is organized as follows. In Section \ref{Section2.1}, we provide sufficient conditions for the measures $\mathbb{P}_N^{\theta}$ to be well-defined. Section \ref{Section2.2} explains the compactification argument from \cite[Section 2]{Hardy} and in Section \ref{Section2.3} we prove Theorem \ref{ThmFunct}. In Section \ref{Section3.1} we state a certain weak LDP upper bound for the pushforward measures of $\mu_N$ under the map $T$ in Section \ref{Section2.2}, this is Lemma \ref{S3WLDP}, as well as two technical results -- Lemmas \ref{ConvL} and \ref{CompactL}. Within the same section, we prove Theorem \ref{ThmMain} using these three results. Lemma \ref{S3WLDP} is proved in Section \ref{Section3.2}, while Lemmas \ref{ConvL} and \ref{CompactL} are proved in Section \ref{Section3.3}. In Section \ref{Section4} we give two applications of Theorem \ref{ThmMain}. One of them is to certain measures related to Jack symmetric functions, and the other is to certain discrete analogues of the Cauchy ensembles from \cite[Example 1.3]{Hardy}.

%
\subsection*{Acknowledgments} The authors would like to thank Vadim Gorin for useful comments on earlier drafts of the paper. E.D. is partially supported by NSF grant DMS:2054703.

%
\section{Preliminary results}\label{Section2} In Section \ref{Section2.1} we give sufficient conditions under which the measures in (\ref{PDef}) are well-defined. In Section \ref{Section2.2} we recall the compactification argument from \cite[Section 2]{Hardy} and in Section \ref{Section2.3} we present the proof of Theorem  \ref{ThmFunct}. We continue with the same notation as in Section \ref{Section1.2}.

%
\subsection{Well-posedness of $\mathbb{P}_N^{\theta}$}\label{Section2.1}

We recall the following result from \cite{DD21}.
\begin{lemma}\label{InterApprox}\cite[Lemma 2.14]{DD21} For any $x \geq \theta > 0$ we have
\begin{equation}\label{Sandwich}
Q_\theta(x) = \frac{\Gamma(x + 1)\Gamma(x+ \theta)}{\Gamma(x)\Gamma(x +1-\theta)} \in \left[  x^{2\theta} \cdot \exp(-(1+\theta)^3x^{-1}),  x^{2\theta} \cdot \exp((1+\theta)^3x^{-1}) \right].
\end{equation}
\end{lemma}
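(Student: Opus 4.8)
This is quoted as \cite[Lemma 2.14]{DD21}, so in the present paper it is simply invoked; here is how I would prove it from scratch. Since $x\ge\theta>0$, the four Gamma arguments $x$, $x+1$, $x+\theta$, $x+1-\theta$ are all positive — for the last one, $x+1-\theta\ge1$ — so $Q_\theta(x)>0$ and the assertion is equivalent to the two-sided estimate $\bigl|\log Q_\theta(x)-2\theta\log x\bigr|\le(1+\theta)^3x^{-1}$. The plan is to write this difference as a short integral of the digamma function $\psi:=(\log\Gamma)'$ and estimate the integrand pointwise.

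First I would record the identity
\[
\log Q_\theta(x)-2\theta\log x \;=\; \int_{x+1-\theta}^{\,x+\theta}\bigl(\psi(s)-\log x\bigr)\,ds ,
\]
read as a signed integral when $\theta<\tfrac12$. Indeed $\log Q_\theta(x)=\bigl(\log\Gamma(x+1)-\log\Gamma(x)\bigr)+\bigl(\log\Gamma(x+\theta)-\log\Gamma(x+1-\theta)\bigr)=\int_x^{x+1}\psi(s)\,ds+\int_{x+1-\theta}^{x+\theta}\psi(s)\,ds$; the first integral equals $\log x$ \emph{exactly}, because $\Gamma(x+1)=x\Gamma(x)$, while $\int_x^{x+1}\log x\,ds+\int_{x+1-\theta}^{x+\theta}\log x\,ds=\log x+(2\theta-1)\log x=2\theta\log x$, so subtracting gives the displayed formula. (As a check, for $\theta\in\{\tfrac12,1,2\}$ the right side is $0$, $0$, $\log(1-x^{-2})$, which matches $Q_\theta(x)=x$, $x^2$, $x^2(x^2-1)$.)

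Next I would bound the integrand via the classical inequalities $\log t-t^{-1}<\psi(t)<\log t-(2t)^{-1}$ for $t>0$, writing $\psi(s)-\log x=\bigl(\psi(s)-\log s\bigr)+\log(s/x)$. Here the hypothesis $x\ge\theta$ is used: $s$ runs over an interval of length $|2\theta-1|$ on which $s\ge\min(x+\theta,x+1-\theta)$, and this lower bound is $\ge x$ when $\theta\le1$ and is $\ge1$ — in fact, as one checks, $\ge x/\theta$ — when $\theta>1$; so on this interval $|\psi(s)-\log s|\le s^{-1}=O(x^{-1})$ and $|\log(s/x)|=|\log s-\log x|=O(x^{-1})$ by the mean value theorem, both with constants polynomial in $\theta$. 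Integrating over the length-$|2\theta-1|$ interval then yields $\bigl|\log Q_\theta(x)-2\theta\log x\bigr|\le C(\theta)\,x^{-1}$ for an explicit polynomial $C(\theta)$.

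The remaining work is to verify $C(\theta)\le(1+\theta)^3$ uniformly in $x\ge\theta$ and $\theta>0$; there is a lot of slack, since a more careful expansion gives $\log Q_\theta(x)-2\theta\log x=-\tfrac{\theta(\theta-1)(2\theta-1)}{6x^2}+O(x^{-3})$, so the difference is really $O(x^{-2})$. To stay under $(1+\theta)^3$ one should not apply the triangle inequality to the two pieces separately but keep the cancellation between $\int_{x+1-\theta}^{x+\theta}\log(s/x)\,ds$ and the $-\tfrac12\log\frac{x+\theta}{x+1-\theta}$ coming from the $-\tfrac1{2s}$ part of $\psi(s)-\log s$, so that the $x^{-1}$ term has a small coefficient; using the two-sided (not one-sided) digamma bound is essential for this. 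I expect the main obstacle to be exactly this constant bookkeeping in the regime $\theta>1$ with $x$ close to $\theta$, where the argument $x+1-\theta$ sits near its minimum value $1$ and the crude estimates are weakest — which is precisely the situation the hypothesis $x\ge\theta$ is tailored to control. An equivalent route uses Stirling's expansion $\log\Gamma(z)=(z-\tfrac12)\log z-z+\tfrac12\log2\pi+\mu(z)$ with $0<\mu(z)<(12z)^{-1}$: the polynomial-in-$z$ contributions to $\log Q_\theta(x)-2\theta\log x$ collapse to an $O(x^{-2})$ remainder and the four $\mu$-values contribute $O(x^{-1})$, again using $x+1-\theta\ge1$ to control $\mu(x+1-\theta)$.
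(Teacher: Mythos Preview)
The paper does not prove this lemma; it is quoted verbatim from \cite[Lemma 2.14]{DD21}, so there is no in-paper argument to compare against. Your sketch stands on its own, and the strategy is sound: the integral identity $\log Q_\theta(x)-2\theta\log x=\int_{x+1-\theta}^{x+\theta}(\psi(s)-\log x)\,ds$ is correct (your sanity checks at $\theta=\tfrac12,1,2$ confirm it), the digamma bounds are the right tool, and you have correctly located where the hypothesis $x\ge\theta$ enters---to keep $x+1-\theta\ge1$ and, equivalently, $s\ge x/\theta$ on the range of integration when $\theta>1$.

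You are also right that the naive triangle inequality on the split $\psi(s)-\log x=(\psi(s)-\log s)+\log(s/x)$ loses an order in $x$: each piece separately contributes a term of size $\frac{2\theta-1}{2x}$ with opposite signs, and only after this cancellation does the difference drop to $O(x^{-2})$. The Stirling route you mention at the end is the standard way to package exactly this cancellation and is likely closest to what the cited reference actually does. The only part left genuinely incomplete is the explicit verification that the resulting constant stays under $(1+\theta)^3$ uniformly in $x\ge\theta$; you flag this honestly and correctly identify the worst regime ($\theta>1$, $x$ near $\theta$), so the sketch is accurate as a proof plan, though not yet a finished proof.
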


\begin{lemma}\label{WellDef} Fix $\theta > 0$, $N \in \mathbb{N}$, $a_N \in \mathbb{Z} \cup \{ - \infty\}$, $b_N \in \mathbb{Z} \cup \{ \infty \}$ with $a_N \leq b_N$, and $\mathbb{W}^{\theta}_{N}(a_N, b_N)$ as in (\ref{GenState}). Suppose $V_N: \mathbb{R} \rightarrow \mathbb{R}$ is continuous and satisfies (\ref{VNgrowth}). For each $\ell= (\ell_1, \dots, \ell_N) \in \mathbb{W}^{\theta}_{N}(a_N, b_N)$ define 
$$W(\ell) = \prod_{1 \leq i < j \leq N} Q_{\theta}(\ell_i-\ell_j)  \prod_{i = 1}^N e^{- \theta N V_N(\ell_i/N)},$$
where $Q_{\theta}$ is as in (\ref{Sandwich}). Then, $W(\ell) > 0$ for all $\ell \in\mathbb{W}^{\theta}_{N}(a_N, b_N)$, and $Z_N : = \sum_{ \ell \in \mathbb{W}^{\theta}_{N}(a_N, b_N)} W(\ell)  \in (0, \infty)$. In particular, (\ref{PDef}) is a well-defined probability measure.
\end{lemma}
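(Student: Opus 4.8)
The plan is to show positivity of each weight $W(\ell)$ first, and then to dominate the sum defining $Z_N$ by a product of one-dimensional series that converge under the growth hypothesis (\ref{VNgrowth}). For positivity, fix $\ell = (\ell_1, \dots, \ell_N) \in \mathbb{W}^{\theta}_N(a_N, b_N)$. Since $\lambda_i = \ell_i - (N-i)\theta$ with $\lambda_N \leq \cdots \leq \lambda_1$ integers, for $i < j$ we have $\ell_i - \ell_j = (\lambda_i - \lambda_j) + (j - i)\theta \geq (j-i)\theta \geq \theta > 0$. By Lemma \ref{InterApprox} applied with $x = \ell_i - \ell_j \geq \theta$, we get $Q_\theta(\ell_i - \ell_j) > 0$; alternatively one simply notes that all four Gamma factors $\Gamma(x+1), \Gamma(x+\theta), \Gamma(x), \Gamma(x+1-\theta)$ are finite and positive for $x \geq \theta > 0$ when $\theta \le 1$, and for $\theta>1$ one uses the sandwich bound directly to see $Q_\theta(x)\in(0,\infty)$. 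Since $e^{-\theta N V_N(\ell_i/N)} > 0$ as $V_N$ is real-valued, we conclude $W(\ell) > 0$, and hence $Z_N > 0$ (the state space is nonempty, e.g.\ it contains the obvious minimal configuration when $a_N$ is finite, and in general is always nonempty).

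The substantive point is $Z_N < \infty$. The idea is to bound $W(\ell)$ by a quantity that factorizes over the coordinates $\ell_i$. Using the inequality $|x - y| \leq \sqrt{1+x^2}\sqrt{1+y^2}$ from (\ref{SQLog}), together with the asymptotics $Q_\theta(x) \leq x^{2\theta} e^{(1+\theta)^3 x^{-1}} \le C_\theta \cdot (1+x^2)^{\theta}$ for $x \ge \theta$ (the factor $e^{(1+\theta)^3 x^{-1}}$ is bounded on $[\theta,\infty)$ and $x^{2\theta}\le(1+x^2)^\theta$), one obtains
\begin{equation*}
\prod_{1 \leq i < j \leq N} Q_\theta(\ell_i - \ell_j) \leq C_\theta^{\binom{N}{2}} \prod_{1 \leq i < j \leq N} \left( 1 + (\ell_i - \ell_j)^2 \right)^{\theta} \leq C_\theta^{\binom{N}{2}} \prod_{i=1}^N (1 + \ell_i^2)^{(N-1)\theta},
\end{equation*}
where in the last step each factor $(1+(\ell_i-\ell_j)^2)^\theta \le (1+\ell_i^2)^\theta(1+\ell_j^2)^\theta$ is distributed to the two indices, and each index appears in $N-1$ pairs. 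Hence
\begin{equation*}
W(\ell) \leq C_\theta^{\binom{N}{2}} \prod_{i=1}^N (1 + \ell_i^2)^{(N-1)\theta} e^{-\theta N V_N(\ell_i/N)} = C_\theta^{\binom{N}{2}} \prod_{i=1}^N g_N(\ell_i),
\end{equation*}
where $g_N(t) := (1+t^2)^{(N-1)\theta} e^{-\theta N V_N(t/N)}$. Summing over all of $\mathbb{W}^{\theta}_N(a_N,b_N) \subseteq \{(\ell_1,\dots,\ell_N) : \ell_i \in \mathbb{Z}+(N-i)\theta\}$ and dropping the ordering and range constraints gives $Z_N \leq C_\theta^{\binom{N}{2}} \prod_{i=1}^N \big( \sum_{m \in \mathbb{Z}} g_N(m + (N-i)\theta) \big)$, so it remains to show each such one-dimensional lattice sum is finite.

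For that, fix $i$ and consider $S_i := \sum_{m\in\mathbb{Z}} g_N(m+(N-i)\theta)$. The hypothesis (\ref{VNgrowth}) says there exist $\theta'_N > 1/2$, a constant $c$, and $R>0$ so that for $|t| \ge R$ we have $N\theta V_N(t) \geq (\theta'_N + (N-1)\theta)\log(1+t^2) + c$; equivalently, with $t=\ell/N$, $\theta N V_N(\ell/N) \ge (\theta'_N+(N-1)\theta)\log(1+\ell^2/N^2)+c$ for $|\ell|\ge NR$. Therefore for lattice points with $|\ell|\ge NR$,
\begin{equation*}
g_N(\ell) = (1+\ell^2)^{(N-1)\theta} e^{-\theta N V_N(\ell/N)} \leq e^{-c} (1+\ell^2)^{(N-1)\theta} (1 + \ell^2/N^2)^{-\theta'_N - (N-1)\theta} \leq C' (1+\ell^2)^{-\theta'_N},
\end{equation*}
where $C'$ absorbs $e^{-c}$ and the bounded ratio $(1+\ell^2)^{(N-1)\theta}(1+\ell^2/N^2)^{-(N-1)\theta} \le N^{2(N-1)\theta}$. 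Since $\theta'_N > 1/2$, the tail $\sum_{|\ell| \geq NR} (1+\ell^2)^{-\theta'_N}$ over any fixed shifted lattice converges (it is comparable to $\int_{NR}^\infty (1+t^2)^{-\theta'_N}\,dt < \infty$ because $2\theta'_N>1$), while the finitely many remaining terms with $|\ell|<NR$ contribute a finite amount because $V_N$ is continuous hence bounded on the compact set $[-R,R]$. Thus each $S_i < \infty$, giving $Z_N < \infty$, and consequently (\ref{PDef}) defines a genuine probability measure. The main obstacle is purely bookkeeping: arranging the bound on $\prod Q_\theta(\ell_i-\ell_j)$ so that it splits into a product over single coordinates with exponent $(N-1)\theta$ exactly matching the $(N-1)\theta$ subtracted inside the $\liminf$ in (\ref{VNgrowth}), leaving the strictly-greater-than-$1/2$ surplus $\theta'_N$ to ensure convergence of the one-dimensional sums.
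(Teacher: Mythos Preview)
Your approach is essentially the same as the paper's: bound $Q_\theta$ via Lemma~\ref{InterApprox}, factorize the pairwise interaction using (\ref{SQLog}), and reduce $Z_N$ to a product of one-dimensional lattice sums that converge because of the surplus $\theta'_N > 1/2$ in (\ref{VNgrowth}). However, one step is incorrect as written.

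You claim $(1+(\ell_i-\ell_j)^2)^\theta \le (1+\ell_i^2)^\theta(1+\ell_j^2)^\theta$, but this fails in general: with $\ell_i = 1$, $\ell_j = -1$ the left side is $5$ and the right side is $4$. The inequality (\ref{SQLog}) you cite says only $(\ell_i-\ell_j)^2 \le (1+\ell_i^2)(1+\ell_j^2)$, without the extra ``$+1$'' on the left. The fix is immediate: skip the detour through $(1+x^2)^\theta$ and bound $Q_\theta(x) \le C_\theta\, x^{2\theta}$ for $x \ge \theta$ directly (since $e^{(1+\theta)^3 x^{-1}} \le e^{(1+\theta)^3/\theta}$ there), then apply (\ref{SQLog}) to $(\ell_i-\ell_j)^{2\theta}$. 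Alternatively, the weaker inequality $1+(a-b)^2 \le 2(1+a^2)(1+b^2)$ holds for all real $a,b$, so an extra factor of $2^{\theta\binom{N}{2}}$ rescues your displayed chain as written. Either way the rest of your argument goes through. (A minor arithmetic slip: your constant $C'$ should absorb $N^{2((N-1)\theta + \theta'_N)}$ rather than $N^{2(N-1)\theta}$, since the $(1+\ell^2/N^2)^{-\theta'_N}$ factor also needs rescaling to become $(1+\ell^2)^{-\theta'_N}$; this does not affect finiteness.) The paper sidesteps both of these bookkeeping issues by working at scale $\ell_i/N$ from the outset and by using continuity of $V_N$ to upgrade (\ref{VNgrowth}) to a global bound, so that the $(1+(\ell_i/N)^2)^\theta$ factors coming from (\ref{SQLog}) match the growth condition exactly.
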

\begin{proof}

The positivity of $ W(\ell) $ follows from the positivity of the gamma function on $(0, \infty)$ and the positivity of exponential functions. Thus, we only need to prove that 
\begin{equation}\label{L2R1}
Z_N^{\infty} := \sum_{ \ell \in \mathbb{W}_N^{\theta}(-\infty, \infty)} \prod_{1 \leq i < j \leq N} Q_{\theta}(\ell_i-\ell_j) \cdot  \prod_{i = 1}^N e^{- \theta N V_N(\ell_i/N)} < \infty.
\end{equation}
The continuity of $V_N$ and (\ref{VNgrowth}) imply that we can find $A > 0$ such that for all $x \in \mathbb{R}$
$$- \theta N V_N(x) \leq A - (\theta'_N+(N-1) \theta )\log(1+x^2)$$
Combining the latter with Lemma \ref{InterApprox}, we conclude that 
\begin{equation}\label{ZUB1}
\begin{split}
&Z_N^{\infty} = \sum_{ \ell \in \mathbb{W}_N^{\theta}(-\infty, \infty)} \prod_{1 \leq i < j \leq N} Q_{\theta}(\ell_i-\ell_j) \cdot  \prod_{i = 1}^N e^{- \theta N V_N(\ell_i/N)}    \\
&  \leq e^{AN+(1+\theta)^3B }N^{N(N-1)\theta} \hspace{-2mm} \sum\limits_{\ell\in\mathbb{W}_N^{\theta}(-\infty, \infty) }\prod\limits_{1\leq i<j\leq N}(\ell_i/N- \ell_j/N)^{2\theta}\prod\limits_{i=1}^Ne^{-(\theta'_N+(N-1)\theta)\log(1+(\ell_i/N)^2)},
\end{split}
\end{equation}
where we have set $B : = \sum_{1 \leq i < j \leq N} \frac{1}{(j-i) \theta}$ and used that for $ {\ell} \in \mathbb{W}_N^{\theta}(-\infty, \infty)$ we have
$$\sum_{1 \leq i < j \leq N} \frac{1}{\ell_i - \ell_j} \leq \sum_{1 \leq i < j \leq N} \frac{1}{(j-i) \theta} = B.$$
Combining (\ref{ZUB1}) with (\ref{SQLog}) we get that there exists $C > 0$, depending on $N$ and $\theta$, such that 
$$Z_N^{\infty}  \leq C \cdot \sum\limits_{{\ell}\in\mathbb{W}_N^{\theta}(-\infty, \infty) }\prod\limits_{i=1}^Ne^{-\theta'_N \log(1+(\ell_i/N)^2)} \leq C \cdot \prod_{i = 1}^N \left(\sum_{x \in \mathbb{Z}} \frac{1}{\left(1 + (x + (N-i)\cdot \theta)^2/ N^2 \right)^{\theta'_N}} \right).$$
The last inequality implies (\ref{L2R1}) since $\theta'_N > 1/2$ by assumption.
\end{proof}

%
\subsection{Compactification}\label{Section2.2} In this section, we describe the compactification procedure from \cite[Section 2]{Hardy}, and recall the results from that paper that we require. We mention that the setup in \cite{Hardy} is for $\mathbb{C}$, but can be readily adapted to $\mathbb{R}$, using the usual embedding of $\mathbb{R}$ in $\mathbb{C}$.

Let $\mathcal{S} \subseteq \mathbb{R}^2$ be given by 
$$\mathcal{S} := \{ \vec{x} = (x_1, x_2) \in \mathbb{R}^2: x_1^2 + (x_2 - 1/2)^2 = 1 \}.$$
In words, $\mathcal{S}$ is the circle of radius $1/2$, centered at the point $(0, 1/2)$. We let $T: \mathbb{R} \rightarrow \mathcal{S}$ be 
$$T(x) := \left( \frac{x}{1 + x^2}, \frac{x^2}{1 + x^2} \right),$$
and note that $T$ is a homeomorphism from $\mathbb{R}$ onto $\mathcal{S} \setminus \{ \mathsf{np} \}$, where we write $\mathsf{np} = (0,1)$. If $\Delta \subseteq \mathbb{R}$ is a closed interval, we set 
\begin{equation}\label{S2CompDelta}
\Delta_{\mathcal{S}} = T(\Delta) \cup \{ \mathsf{np}\},
\end{equation}
and note that $\Delta_{\mathcal{S}}$ is a closed subset of $\mathcal{S}$, and hence $\mathbb{R}^2$. For a closed subset $F \subseteq \mathbb{R}^2$ we endow it with the subspace topology, coming from the usual topology on $\mathbb{R}^2$, and corresponding Borel $\sigma$-algebra. In addition, we write $\mathcal{M}(F)$ for the set of probability measures in $\mathcal{M}(\mathbb{R}^2)$, whose support is contained in $F$. We endow $\mathcal{M}(F)$ with the weak topology, which is the same as the subspace topology coming from $\mathcal{M}(\mathbb{R}^2)$. In particular, if $d_2$ denotes the L{\'e}vy metric in (\ref{LevyMetric}) then $(\mathcal{M}(F), d_2)$ is a Polish space, and the metric topology is the same as the weak topology on $\mathcal{M}(F)$.

Given $\mu \in \mathcal{M}(\mathbb{R})$, we let $T_* \mu$ denote the push-forward of $\mu$ under the map $T$, i.e. for each Borel set $A \subseteq \mathcal{S}$ we have 
\begin{equation}\label{S2Push}
T_*\mu (A) := \mu (T^{-1} (A)).
\end{equation}
We now state the first result we require from \cite[Section 2]{Hardy}.
\begin{lemma}\label{S2TDiff}\cite[Lemma 2.1]{Hardy} For each closed interval $\Delta \subseteq \mathbb{R}$ the map $T_*$ is a homeomorphism from $\mathcal{M}(\Delta)$ to $\{ \nu \in \mathcal{M}(\Delta_\mathcal{S}): \nu(\{ \mathsf{np}\}) = 0\}.$
\end{lemma}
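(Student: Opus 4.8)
The plan is to verify in turn that $T_*$ is (i) a well-defined bijection from $\mathcal{M}(\Delta)$ onto the target set $\mathcal{T}:=\{\nu\in\mathcal{M}(\Delta_\mathcal{S}):\nu(\{\mathsf{np}\})=0\}$, (ii) continuous, and (iii) has continuous inverse. Throughout one uses only that $T$ is a homeomorphism of $\mathbb{R}$ onto $\mathcal{S}\setminus\{\mathsf{np}\}$, which is open (hence Borel) in $\mathcal{S}$, so that $T$ and $T^{-1}$ send Borel sets to Borel sets and $T^{-1}(\{\mathsf{np}\})=\emptyset$; note also $T(\Delta)=\Delta_\mathcal{S}\setminus\{\mathsf{np}\}$. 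For (i): if $\mu\in\mathcal{M}(\Delta)$ then $T_*\mu$ is supported on $T(\Delta)\subseteq\Delta_\mathcal{S}$ and $T_*\mu(\{\mathsf{np}\})=\mu(T^{-1}(\{\mathsf{np}\}))=0$, so $T_*\mu\in\mathcal{T}$; injectivity holds since $\mu(A)=T_*\mu(T(A))$ for all Borel $A\subseteq\Delta$; and given $\nu\in\mathcal{T}$, the condition $\nu(\{\mathsf{np}\})=0$ forces $\nu(T(\Delta))=1$, so $\mu:=(T^{-1})_*(\nu|_{\mathcal{S}\setminus\{\mathsf{np}\}})$ lies in $\mathcal{M}(\Delta)$ and a direct check gives $T_*\mu=\nu$. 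This last computation also identifies $(T_*)^{-1}$ as the map $\nu\mapsto(T^{-1})_*(\nu|_{\mathcal{S}\setminus\{\mathsf{np}\}})$.

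For (ii), let $\mu_n\to\mu$ in $\mathcal{M}(\Delta)$. For any $f\in C_b(\mathbb{R}^2)$ the function $f\circ T$ is bounded and continuous on $\Delta$, hence extends by the Tietze extension theorem to some $g\in C_b(\mathbb{R})$, so $\int f\,d(T_*\mu_n)=\int g\,d\mu_n\to\int g\,d\mu=\int f\,d(T_*\mu)$; thus $T_*\mu_n\to T_*\mu$ in $\mathcal{M}(\mathbb{R}^2)$, hence in $\mathcal{T}$ (with its subspace topology).

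For (iii), the substantive point: let $\nu_n\to\nu$ in $\mathcal{T}$ and put $\mu_n:=(T^{-1})_*(\nu_n|_{\mathcal{S}\setminus\{\mathsf{np}\}})$ and $\mu:=(T^{-1})_*(\nu|_{\mathcal{S}\setminus\{\mathsf{np}\}})$, both in $\mathcal{M}(\Delta)$ by (i). Fix $f\in C_b(\mathbb{R})$ and set $g:=f\circ T^{-1}$, a bounded continuous function on $\mathcal{S}\setminus\{\mathsf{np}\}$ that need not extend continuously across $\mathsf{np}$ — precisely the obstruction to a one-line argument. To handle it, fix $\epsilon>0$ and, using $\bigcap_{\delta>0}B_\delta=\{\mathsf{np}\}$ and $\nu(\{\mathsf{np}\})=0$, choose $\delta>0$ with $\nu(B_\delta)<\epsilon$, where $B_\delta$ is the closed ball of radius $\delta$ about $\mathsf{np}$ in $\mathbb{R}^2$; since $B_\delta$ is closed, the portmanteau theorem gives $\limsup_n\nu_n(B_\delta)\le\nu(B_\delta)<\epsilon$. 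On the closed set $\mathbb{R}^2\setminus B_\delta^\circ$ the function $g$ is bounded and continuous, so by Tietze it extends to $\tilde g\in C_b(\mathbb{R}^2)$ with $\|\tilde g\|_\infty\le\|f\|_\infty$; as $g=\tilde g$ off $B_\delta$,
\begin{equation*}
\Big|\int g\,d\nu_n-\int g\,d\nu\Big|\le\Big|\int\tilde g\,d\nu_n-\int\tilde g\,d\nu\Big|+2\|f\|_\infty\big(\nu_n(B_\delta)+\nu(B_\delta)\big).
\end{equation*}
The first term tends to $0$ by (ii)-type reasoning (narrow convergence tested against $\tilde g\in C_b(\mathbb{R}^2)$), and the $\limsup$ of the second is at most $4\|f\|_\infty\epsilon$; letting $\epsilon\downarrow0$ yields $\int f\,d\mu_n=\int g\,d\nu_n\to\int g\,d\nu=\int f\,d\mu$. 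Since $f\in C_b(\mathbb{R})$ was arbitrary, $\mu_n\to\mu$ in $\mathcal{M}(\mathbb{R})$, hence in $\mathcal{M}(\Delta)$, so $(T_*)^{-1}$ is continuous.

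I expect step (iii) — i.e.\ controlling mass near the north pole $\mathsf{np}$, where $T^{-1}$ degenerates — to be the only genuine difficulty; the rest is bookkeeping with pushforwards and Tietze extensions. An equivalent repackaging is to note that $T$ extends to a homeomorphism of the one-point compactification $\mathbb{R}\cup\{\infty\}$ onto the compact circle $\mathcal{S}$ (sending $\infty\mapsto\mathsf{np}$), under which $\mathcal{M}(\Delta)$ corresponds to the probability measures on $\Delta\cup\{\infty\}$ that give $\{\infty\}$ mass zero; the content of (iii) is then that on this subset the narrow topology inherited from $\mathcal{M}(\Delta\cup\{\infty\})$ agrees with the weak topology of $\mathcal{M}(\Delta)$, which is exactly the tightness-type estimate carried out above.
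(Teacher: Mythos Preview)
Your argument is correct. Note, however, that the paper does not supply its own proof of this lemma: it is quoted verbatim as \cite[Lemma 2.1]{Hardy} and used as a black box, so there is no ``paper's proof'' to compare against. Two small cosmetic remarks on your write-up: in step (ii) the appeal to Tietze is unnecessary, since $f\circ T$ is already a bounded continuous function on all of $\mathbb{R}$ (not merely on $\Delta$), so one may integrate it directly against $\mu_n$ and $\mu$; and in step (iii) the function $g=f\circ T^{-1}$ is defined only on $\mathcal{S}\setminus\{\mathsf{np}\}$, so the closed set on which you apply Tietze should be $\mathcal{S}\setminus B_\delta^\circ$ rather than $\mathbb{R}^2\setminus B_\delta^\circ$ --- this is indeed closed in $\mathbb{R}^2$ and the extension goes through exactly as you say. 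Your identification of the genuine content (controlling mass near $\mathsf{np}$ via portmanteau for closed sets) is spot on, and the alternative one-point-compactification viewpoint you sketch at the end is precisely how \cite{Hardy} motivates the construction.
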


Our next task is to reformulate the minimization problem of (\ref{S1EF}), which is defined for measures on $\mathbb{R}$, to one that is defined for measures on $\mathcal{S}$. Below we assume that $V(x)$ is a continuous function on $\mathbb{R}$ that satisfies (\ref{S1VGrowth}). We define the function $\mathcal{V}: \mathcal{S} \rightarrow (-\infty, \infty]$ through
\begin{equation}\label{S2VtoV}
\mathcal{V}(\vec{x}) = \begin{cases} V(y) - \log (1 + y^2) &\mbox{ if $\vec{x} \neq \mathsf{np}$ and $y = T^{-1}(\vec{x})$ } \\ \liminf_{|x| \rightarrow \infty}  V(x) - \log (1 + x^2)&\mbox{ if $\vec{x} = \mathsf{np}$.} \end{cases}
\end{equation}
The growth condition (\ref{S1VGrowth}) ensures that $\mathcal{V}$ is lower semi-continuous and bounded from below on $\mathcal{S}$ so that the function $F_{\mathcal{V}}: \mathcal{S} \times \mathcal{S} \rightarrow (-\infty, \infty]$, given by
\begin{equation}\label{S2Ker}
F_{\mathcal{V}}(\vec{x}, \vec{y}) = \log \| \vec{x} - \vec{y} \|_2^{-1} + \frac{1}{2} \mathcal{V}(\vec{x}) + \frac{1}{2} \mathcal{V}(\vec{y}), \hspace{2mm} \vec{x}, \vec{y} \in \mathcal{S},
\end{equation}
is lower semi-continuous and bounded from below on $\mathcal{S} \times \mathcal{S}$, where we recall that $\|\cdot \|_n$ is the Euclidean distance on $\mathbb{R}^n$. The latter implies that the weighted energy integral 
\begin{equation}\label{S2EF}
E_{\mathcal{V}}(\nu) = \iint_{\mathcal{S}^2} F_{\mathcal{V}}(\vec{x},\vec{y}) \nu(d\vec{x}) \nu(d\vec{y}), \hspace{2mm} \nu \in \mathcal{M}(\mathcal{S})
\end{equation}
is well-defined, and takes values in $(-\infty, \infty]$. In addition, from \cite[Equation (2.9)]{Hardy} we have that if $V(x)$ is continuous and satisfies (\ref{S1VGrowth}), and $E_V$ is as in (\ref{S1EF}), then
\begin{equation}\label{S2EFEq}
E_V(\mu) = E_{\mathcal{V}}(T_*\mu)  \mbox{ for all $\mu \in \mathcal{M}(\mathbb{R})$,} 
\end{equation}
provided that $T_*$ is as in (\ref{S2Push}), and $\mathcal{V}$ is as in (\ref{S2VtoV}). Equation (\ref{S2EFEq}) is the key identity, which allows us to transport the minimization problem in (\ref{S1EF}) over measures in $\mathcal{M}(\mathbb{R})$ to one over measures in $\mathcal{M}(\mathcal{S})$, the latter space being more convenient in view of its compactness.\\

We end this section by formulating a useful proposition, which can be found as \cite[Proposition 2.3]{Hardy}. As our formulation is slightly different, we will also provide the proof of the proposition for completeness. We mention that the core of our proof is the same as that of \cite[Proposition 2.3]{Hardy} and relies on an appropriate application of \cite[Theorem 2.5]{CKL98}. Our main contribution is in providing a more detailed justification of why \cite[Theorem 2.5]{CKL98} is applicable, compared to \cite[Proposition 2.3]{Hardy}.

\begin{proposition}\label{S2GRF} Let $V$ be a continuous function on $\mathbb{R}$ that satisfies (\ref{S1VGrowth}), and let $\mathcal{V}$ be as in (\ref{S2VtoV}). If $E_{\mathcal{V}}$ is as in (\ref{S2EF}), then 
\begin{enumerate}[label=(\alph*)]
\item For each $\alpha \in \mathbb{R}$ the set $\{ \nu \in \mathcal{M}(\mathcal{S}): E_{\mathcal{V}}(\nu) \leq \alpha \}$ is compact in $\mathcal{M}(\mathcal{S})$. 
\item The function $E_{\mathcal{V}}$ is strictly convex on $\mathcal{M}(\mathcal{S})$ in the sense that for any $\mu, \nu \in \mathcal{M}(\mathcal{S})$ and $t \in (0,1)$ we have
\begin{equation}\label{S2Convex}
E_{\mathcal{V}}(t \mu + (1- t) \nu) \leq t E_{\mathcal{V}}(\mu) + (1- t) E_{\mathcal{V}}(\nu), 
\end{equation}
and the inequality in (\ref{S2Convex}) is strict if $\mu \neq \nu$ (here $\infty < \infty$ is allowed).
\end{enumerate}
\end{proposition}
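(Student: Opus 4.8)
The plan is to prove (a) by exploiting the compactness of $\mathcal{S}$, and (b) by reducing to the energy principle for the logarithmic kernel. For (a): since $\mathcal{S}$ is a compact metric space, Prokhorov's theorem gives that $\mathcal{M}(\mathcal{S})$ is compact in the weak topology, so it suffices to show that $E_{\mathcal{V}}$ is lower semi-continuous on $\mathcal{M}(\mathcal{S})$ — then each set $\{\nu\in\mathcal{M}(\mathcal{S}):E_{\mathcal{V}}(\nu)\le\alpha\}$ is closed in a compact space, hence compact. To get lower semi-continuity I would use that $F_{\mathcal{V}}$ is lower semi-continuous and bounded below on the compact metric space $\mathcal{S}\times\mathcal{S}$ (recorded in the excerpt just before \eqref{S2EF}), and write it as an increasing pointwise limit $F_{\mathcal{V}}=\sup_{k\ge1}F_k$ of bounded Lipschitz functions $F_k\in C_b(\mathcal{S}^2)$, e.g. the truncated inf-convolutions $F_k(\vec x,\vec y)=\min\{k,\ \inf_{\vec u,\vec v}[F_{\mathcal{V}}(\vec u,\vec v)+k(\|\vec x-\vec u\|_2+\|\vec y-\vec v\|_2)]\}$. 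Monotone convergence then gives $E_{\mathcal{V}}(\nu)=\sup_{k\ge1}\iint_{\mathcal{S}^2}F_k\,d\nu\,d\nu$; since $\nu\mapsto\nu\otimes\nu$ is continuous from $\mathcal{M}(\mathcal{S})$ to $\mathcal{M}(\mathcal{S}^2)$ in the weak topologies and $F_k$ is bounded and continuous, each functional $\nu\mapsto\iint F_k\,d\nu\,d\nu$ is weakly continuous, so $E_{\mathcal{V}}$ is a pointwise supremum of continuous functions, hence lower semi-continuous. (This is essentially the ``principle of descent'', and one can instead quote \cite[Theorem 2.5]{CKL98} for it.)

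For (b), I would first split off the confining term. Writing $I(\nu)=\iint_{\mathcal{S}^2}\log\|\vec x-\vec y\|_2^{-1}\,\nu(d\vec x)\,\nu(d\vec y)$ for the pure logarithmic energy, we have $E_{\mathcal{V}}(\nu)=I(\nu)+\int_{\mathcal{S}}\mathcal{V}\,d\nu$. The key structural point is that $\operatorname{diam}(\mathcal{S})=1$, so $\log\|\vec x-\vec y\|_2^{-1}\ge0$ on $\mathcal{S}^2$ and $I$ is well-defined with values in $[0,\infty]$; and $\int_{\mathcal{S}}\mathcal{V}\,d\nu$ is well-defined in $(-\infty,\infty]$ (as $\mathcal{V}$ is bounded below) and affine in $\nu$, so it satisfies \eqref{S2Convex} with equality and can be ignored. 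Thus it remains to prove the claim for $I$. If $I(\mu)=\infty$ or $I(\nu)=\infty$, the right side of \eqref{S2Convex} is $\infty$ and there is nothing to prove (the parenthetical ``$\infty<\infty$ is allowed'' covering the strict case). Otherwise, expanding by bilinearity — legitimate because all terms are finite, as discussed below — yields the identity
\[
tI(\mu)+(1-t)I(\nu)-I\big(t\mu+(1-t)\nu\big)=t(1-t)\,I(\mu-\nu),\qquad I(\mu-\nu):=\iint_{\mathcal{S}^2}\log\|\vec x-\vec y\|_2^{-1}\,d(\mu-\nu)\,d(\mu-\nu).
\]
Hence both the convexity of $E_{\mathcal{V}}$ and its strictness reduce to the \emph{energy principle}: $I(\mu-\nu)\ge0$, with equality only if $\mu=\nu$.

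To conclude I would invoke \cite[Theorem 2.5]{CKL98}, after carefully verifying its hypotheses for the kernel $k(\vec x,\vec y)=\log\|\vec x-\vec y\|_2^{-1}$ on $\mathcal{S}$: it is symmetric, lower semi-continuous (continuous off the diagonal, $+\infty$ on it), bounded below (nonnegative, since $\operatorname{diam}(\mathcal{S})=1$), and strictly positive definite on the cone of finite-energy signed measures of total mass zero. This simultaneously gives that $I(\mu,\nu):=\iint k\,d\mu\,d\nu<\infty$ whenever $I(\mu),I(\nu)<\infty$ (which legitimizes the bilinear expansion above) and that $I(\mu-\nu)\ge0$ with equality iff $\mu=\nu$, because $\mu-\nu$ has total mass $0$. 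I expect the verification of the hypotheses of \cite[Theorem 2.5]{CKL98} to be the main obstacle — in particular the bookkeeping around the value $+\infty$ of $I$ and the fact that $\mathcal{V}$ may be $+\infty$ at $\mathsf{np}$ — and this is precisely the ``more detailed justification'' that the proof is meant to supply. As an alternative, self-contained route to the energy principle on the circle, one can use Fourier analysis: parametrizing $\mathcal{S}$ by the central angle $\phi\in[0,2\pi)$ one has $\|\vec x(\phi)-\vec x(\psi)\|_2=|\sin\tfrac{\phi-\psi}{2}|$ and the classical expansion $-\log|\sin\tfrac{u}{2}|=\log 2+\sum_{k\ge1}k^{-1}\cos(ku)$, so that for a finite-energy signed measure $\sigma$ one gets $I(\sigma)=(\log 2)\,\sigma(\mathcal{S})^2+\sum_{k\ge1}k^{-1}|\widehat\sigma(k)|^2$ with $\widehat\sigma(k)=\int_{\mathcal{S}}e^{ik\phi}\,d\sigma$; taking $\sigma=\mu-\nu$ (so $\sigma(\mathcal{S})=0$), the vanishing of $I(\mu-\nu)$ forces $\widehat{\mu-\nu}(k)=0$ for all $k\ge1$, hence $\mu=\nu$ by density of trigonometric polynomials in $C(\mathcal{S})$. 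The delicate point in this alternative is justifying the interchange of summation and integration for finite-energy measures.
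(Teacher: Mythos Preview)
Your plan is correct and structurally identical to the paper's proof: part (a) via lower semi-continuity (increasing approximation of $F_{\mathcal{V}}$ plus monotone convergence) together with compactness of $\mathcal{M}(\mathcal{S})$; part (b) via splitting off the affine term $\int\mathcal{V}\,d\nu$, reducing to strict convexity of the logarithmic energy $I$, and invoking \cite[Theorem 2.5]{CKL98} for the energy principle $I(\mu-\nu)\ge 0$ with equality iff $\mu=\nu$.

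The one substantive divergence is in how the hypothesis of \cite[Theorem 2.5]{CKL98} is verified. That theorem requires the signed measure $\mu-\nu$ to have finite energy, i.e.\ $I(|\mu-\nu|,|\mu-\nu|)<\infty$; this does \emph{not} come for free from the kernel properties you list, and it is exactly the ``more detailed justification'' the paper supplies. The paper's route is different from both of your suggestions: it proves the Cauchy--Schwarz type inequality $2I(\mu,\nu)\le I(\mu,\mu)+I(\nu,\nu)$ directly, using the Gaussian integral representation
\[
\log\|\vec x-\vec y\|_2^{-1}=\int_0^\infty\frac{1}{2t}\Big(e^{-1/(2t)}-e^{-\|\vec x-\vec y\|_2^2/(2t)}\Big)\,dt
\]
(here nonnegativity of the integrand, because $\operatorname{diam}\mathcal{S}=1$, lets Tonelli swap integrals) and then the Fourier transform of the Gaussian to show $\iint e^{-\|\vec x-\vec y\|_2^2/(2t)}\,d(\mu-\nu)\,d(\mu-\nu)\ge 0$. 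This immediately yields $I(|\mu-\nu|,|\mu-\nu|)\le I(\mu+\nu,\mu+\nu)<\infty$, after which \cite{CKL98} applies. Your Fourier-series alternative on the circle is valid and gives a self-contained proof of the energy principle, but the paper's Gaussian trick (borrowed from \cite[Lemma 2.6.2]{agz}) sidesteps the summation/integration interchange you flag and works verbatim in higher dimensions.
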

\begin{proof} For clarity, we split the proof into two steps. In the first step we prove the proposition modulo a certain inequality, see (\ref{S2Ineq}), and in the second step we establish (\ref{S2Ineq}). The inequality (\ref{S2Ineq}) will be used to show that \cite[Theorem 2.5]{CKL98} is applicable, and its proof is based on adapting some ideas from the proof of \cite[Lemma 2.6.2]{agz}.\\

{\bf \raggedleft Step 1.} We define for $\mu, \nu \in \mathcal{M}(\mathcal{S})$ the function 
\begin{equation}\label{S2MixedFunct}
I(\mu, \nu) := \iint_{\mathcal{S}^2} \log \| \vec{x} - \vec{y} \|_2^{-1} \mu(d\vec{x}) \nu(d\vec{y}),
\end{equation}
and note that as $ \| \vec{x} - \vec{y} \|_2 \leq 1$ the integrand is in $[0, \infty]$, so that the integral is well-defined. We claim that for any $\mu, \nu \in \mathcal{M}(\mathcal{S})$ we have
\begin{equation}\label{S2Ineq}
2I(\mu,\nu) \leq I(\mu,\mu) + I(\nu,\nu).
\end{equation} 
We will establish (\ref{S2Ineq}) in the second step. Here, we assume its validity and conclude the proof of the proposition.\\

We proceed to prove (a). Since $F_{\mathcal{V}}(\vec{x},\vec{y})$ is lower semi-continuous on $\mathcal{S}$, there exists an increasing sequence $F_{\mathcal{V}}^n(\vec{x},\vec{y})$ of continuous functions, which converge to $F_{\mathcal{V}}(\vec{x},\vec{y})$ from below. From the monotone convergence theorem
$$E_{\mathcal{V}}(\mu)=\sup\limits_n\iint_{\mathcal{S}^2} F_{\mathcal{V}}^n(\vec{x},\vec{y})\mu(\vec{x})d\mu(d\vec{y}).$$ 
We conclude that $E_\mathcal{V}$ is lower semi-continuous on $\mathcal{M}(\mathcal{S})$, and so $\{\nu\in\mathcal{M}(\mathcal{S}):E_\mathcal{V}(\nu)\leq \alpha\}$ is closed. Since $\mathcal{M}(\mathcal{S})$ is compact we conclude the same for $\{\nu\in\mathcal{M}(\mathcal{S}):E_\mathcal{V}(\nu)\leq \alpha\}$.

We next prove part (b). Since $\log \| \vec{x} - \vec{y} \|_2^{-1} \geq 0$ and $\mathcal{V}$ is lower bounded on $\mathcal{S}$ we have
$$E_\mathcal{V}(\nu)=I(\nu,\nu)+\int_{\mathcal{S}} \mathcal{V}(\vec{x}) \nu(d\vec{x}),$$
for each $\nu \in \mathcal{M}(\mathcal{S})$. Consequently, it suffices to prove $I(\nu,\nu)$ is strictly convex on $\mathcal{M}({S})$, i.e. (\ref{S2Convex}) holds with $E_{\mathcal{V}}(\nu)$ replaced with $I(\nu,\nu)$.

If $I(\mu,\mu)=\infty$ or $I(\nu,\nu)=\infty$, then the equation trivially holds (recall that $\infty<\infty$ is allowed). If both $I(\mu,\mu)<\infty$ and $I(\nu,\nu)<\infty$, then from (\ref{S2MixedFunct}) and (\ref{S2Ineq}), linearity of the integral and the inequality $\log \| \vec{x} - \vec{y} \|_2^{-1} \geq 0$ we get
\begin{equation}\label{S2Appl}
I(|\nu-\mu|,|\nu-\mu|)\leq I(\nu+\mu,\nu+\mu) = I(\mu,\mu)+I(\nu,\nu) +2I(\mu,\nu) <\infty.
\end{equation}
By linearity, we have 
$$I(t\mu+(1-t)\nu,t\mu+(1-t)\nu)=tI(\mu,\mu)+(1-t)I(\nu,\nu)-t(1-t)I(\mu-\nu,\mu-\nu).$$ 
Equation (\ref{S2Appl}) shows that \cite[Theorem 2.5]{CKL98} is applicable, and the latter gives $I(\mu-\nu,\mu-\nu)\geq 0$ with equality if and only if $\mu = \nu$. The last two statements conclude the proof of the strict convexity of $I(\nu,\nu)$ and hence part (b).\\

{\bf \raggedleft Step 2.} In this step, we prove (\ref{S2Ineq}).
From \cite[(2.6.19)]{agz} we have
\begin{equation*}
\log\| \vec{x} - \vec{y} \|_2^{-1}=\int_0^\infty \frac{1}{2t}\left (\exp\left (-\frac{1}{2t}\right)-\exp\left(-\frac{\| \vec{x} - \vec{y} \|_2^2}{2t} \right) \right)dt,
\end{equation*}
from which we conclude that  
\begin{equation}\label{NewI}
\begin{split}
I(\mu, \nu) =& \hspace{1mm} \iint_{\mathcal{S}^2} \int_0^{\infty} \frac{1}{2t}\left (\exp\left (-\frac{1}{2t}\right)-\exp\left(-\frac{\| \vec{x} - \vec{y} \|_2^2}{2t} \right) \right) \log \| \vec{x} - \vec{y} \|_2^{-1}dt \mu(d\vec{x}) \nu(d\vec{y}) \\
 = & \hspace{1mm} \int_0^{\infty} \iint_{\mathcal{S}^2} \frac{1}{2t}\left (\exp\left (-\frac{1}{2t}\right)-\exp\left(-\frac{\| \vec{x} - \vec{y} \|_2^2}{2t} \right) \right) \log \| \vec{x} - \vec{y} \|_2^{-1} \mu(d\vec{x}) \nu(d\vec{y})dt.
\end{split}
\end{equation}
In going from the first to the second line we used that $\| \vec{x} - \vec{y} \|_2 \leq 1$, which implies that the integrand is non-negative and the order of integration can be exchanged by Tonelli's theorem.

In view of (\ref{NewI}), we see that to show  (\ref{S2Ineq}) it suffices to prove that for each $t \in (0,\infty)$ we have 
\begin{equation*}
\begin{split}
& 2 \iint_{\mathcal{S}^2}\left (\exp\left (-\frac{1}{2t}\right)-\exp\left(-\frac{\| \vec{x} - \vec{y} \|_2^2}{2t} \right) \right) \log \| \vec{x} - \vec{y} \|_2^{-1} \mu(d\vec{x}) \nu(d\vec{y})   \\
& \leq \iint_{\mathcal{S}^2}\left (\exp\left (-\frac{1}{2t}\right)-\exp\left(-\frac{\| \vec{x} - \vec{y} \|_2^2}{2t} \right) \right) \log \| \vec{x} - \vec{y} \|_2^{-1} \mu(d\vec{x}) \mu(d\vec{y}) \\
& + \iint_{\mathcal{S}^2}\left (\exp\left (-\frac{1}{2t}\right)-\exp\left(-\frac{\| \vec{x} - \vec{y} \|_2^2}{2t} \right) \right) \log \| \vec{x} - \vec{y} \|_2^{-1} \nu(d\vec{x}) \nu(d\vec{y}).
\end{split}
\end{equation*}
At this point all the integrands have finite integrals and we can use linearity and symmetry of the first line in $\mu$ and $\nu$ to reduce the above inequality to
\begin{equation}\label{S2RI1}
0\leq \iint_{\mathcal{S}^2} \exp\left(-\frac{\| \vec{x} - \vec{y} \|_2^2}{2t} \right)(\mu-\nu)(d\vec{x})(\mu-\nu)(d\vec{y}).
\end{equation}
Writing as usual $\vec{x} = (x_1, x_2)$ and $\vec{y} = (y_1, y_2)$, and using the identity 
$$e^{-(x-y)^2/2t} = \sqrt{\frac{t}{2\pi}} \int_{\mathbb{R}} e^{\i (x-y) \lambda} e^{-t\lambda^2/2} d\lambda,$$
which is nothing but the characteristic function of a normal variable with mean $0$ and variance $t^{-1}$, we see that 
\begin{equation*}
\begin{split}
&\iint_{\mathcal{S}^2} \exp\left(-\frac{\| \vec{x} - \vec{y} \|_2^2}{2t} \right)(\mu-\nu)(d\vec{x})(\mu-\nu)(d\vec{y})  \\
& = \iint_{\mathcal{S}^2} \exp\left(-\frac{(x_1 - y_1)^2}{2t} -\frac{(x_2 - y_2)^2}{2t}  \right)(\mu-\nu)(d\vec{x})(\mu-\nu)(d\vec{y}) \\
& = \frac{t}{2\pi}\int_{\R}\int_{\R}e^{-t\lambda_1^2/2}e^{-t\lambda_2^2/2}\int_{\mathcal{S}}e^{\i (x_1 - y_1) \lambda_1} (\mu-\nu)(d\vec{x})\int_{\mathcal{S}}e^{\i (x_2 - y_2) \lambda_2} (\mu-\nu)(d\vec{y})d\lambda_1d\lambda_2  \\
& = \frac{t}{2\pi}\int_{\R}\int_{\R}e^{-t\lambda_1^2/2}e^{-t\lambda_2^2/2}\left |\int_{\mathcal{S}}e^{\i \lambda_1x_1+\i \lambda_2x_2}(\mu-\nu)(d\vec{x}) \right|^2d\lambda_1d\lambda_2.
\end{split}
\end{equation*}
The last equality implies (\ref{S2RI1}) as the last integrand is non-negative. This suffices for the proof.
\end{proof}

%
\subsection{Proof of Theorem \ref{ThmFunct}}\label{Section2.3} As explained in Remark \ref{RemFunct}, we only need to show that parts (2), (3) and (4) all hold when $\Delta$ is unbounded and $V(x)$ is a continuous function that satisfies the growth condition in (\ref{S1VGrowth}). We mention that the arguments below are inspired by the proofs of \cite[Theorem 2.1]{ds97} and \cite[Theorem 1.1]{Hardy}. For clarity we split the proof into two steps.\\

{\bf \raggedleft Step 1.} In this step we prove part (2). From our assumption that $\lambda (\Delta) \geq \theta$, we can find a closed interval $\Delta' = [a', a' + \theta] \subseteq \Delta$. We observe that if $\mu$ has density $\theta^{-1} \cdot {\bf 1}\{x \in [a',a' + \theta]\}$, then $\mu \in\mathcal{M}_\theta(\Delta)$ and $E_V(\mu)<\infty$. This implies that $\mathcal{M}_\theta(\Delta)$ is non-empty, $F_V^\theta<\infty$ and $E_V$ is not identically equal to $\infty$ on $\mathcal{M}_\theta(\Delta)$. 

From Proposition \ref{S2GRF} we know for every $\alpha \in \mathbb{R}$ that $ \left\{ \nu \in \mathcal{M}(\mathcal{S}) : E_{\mathcal{V}}(\nu) \leq \alpha \right\}$ is compact. In addition, $\mathcal{M}(\Delta_{\mathcal{S}})$ is a closed subset of $\mathcal{M}(\mathcal{S})$, since $\Delta_{\mathcal{S}}$ is a closed subset of $\mathcal{S}$. We thus conclude that for every $\alpha \in \mathbb{R} $ the set $ \left\{ \nu \in \mathcal{M}(\Delta_{\mathcal{S}}) : E_{\mathcal{V}}(\nu) \leq \alpha \right\}$ is a compact subset of $\mathcal{M}(\mathcal{S})$. We next note that if $\nu(\{ \mathsf{np} \}) > 0$ we have $E_{\mathcal{V}} (\nu) = \infty$ (due to the $\log \| \vec{x} - \vec{y} \|_2^{-1}$ term in (\ref{S2Ker})). The latter means that $ \left\{ \nu \in \mathcal{M}(\Delta_{\mathcal{S}}) : E_{\mathcal{V}}(\nu) \leq \alpha  \right\}$ is a compact subset of $\{ \nu \in \mathcal{M}(\Delta_\mathcal{S}): \nu(\{ \mathsf{np}\}) = 0\}$ for any $\alpha \in \mathbb{R}$.

From (\ref{S2EFEq}) we know that $E_V(\mu) = E_{\mathcal{V}}(T_*\mu)$ for all $\mu \in \mathcal{M}(\mathbb{R})$. Combining the latter with Lemma \ref{S2TDiff} we conclude for $\alpha \in \mathbb{R}$ that 
\begin{equation}\label{S2FH1}
T_* \left\{ \mu \in \mathcal{M} (\Delta) : E_V(\mu) \leq \alpha \right\} = \left\{ \nu \in \mathcal{M}(\Delta_{\mathcal{S}}) : E_{\mathcal{V}}(\nu) \leq \alpha \mbox{ and }\nu(\{ \mathsf{np}\}) = 0 \right\},
\end{equation}
and so $  \left\{ \nu \in \mathcal{M}(\Delta_{\mathcal{S}}) : E_{\mathcal{V}}(\nu) \leq \alpha \mbox{ and }\nu(\{ \mathsf{np}\}) = 0 \right\}$ is homeomorphic to $ \left\{ \mu \in \mathcal{M}(\Delta) : E_{V}(\mu) \leq \alpha  \right\}$. As the right side of (\ref{S2FH1}) is a compact subset of $\{ \nu \in \mathcal{M}(\Delta_\mathcal{S}): \nu(\{ \mathsf{np}\}) = 0\}$ (we explained this in the previous paragraph), we conclude that $\left\{ \mu \in \mathcal{M} (\Delta) : E_V(\mu) \leq \alpha \right\}$ is a compact subset of $\mathcal{M}(\Delta)$. This proves that $E_V$ has compact level sets in $\mathcal{M}(\Delta)$.

Since $E_V(\mu)=E_{\mathcal{V}}(T_*\mu)$ by (\ref{S2EFEq}), we have by (\ref{S2Convex}) for $t \in (0,1)$ and $\mu, \nu \in \mathcal{M}(\mathbb{R})$ that
$$E_{V}(t\mu+(1-t)\nu) \hspace{-0.2mm} = \hspace{-0.2mm} E_{\mathcal{V}}(tT_*\mu+(1-t)T_* \nu)\hspace{-0.2mm} \leq \hspace{-0.2mm} tE_{\mathcal{V}}(T_*\mu)+(1-t)E_{\mathcal{V}}(T_*\nu) \hspace{-0.2mm} = \hspace{-0.2mm} tE_{V}(\mu) + (1- t) E_{V}(\nu), $$
with strict inequality if $\mu \neq \nu$. We conclude that $E_V(\mu)$ is strictly convex on $\mathcal{M}(\mathbb{R})$. 

Note that $\mathcal{M}_{\theta}(\Delta)$ is a closed, convex subset of $\mathcal{M}(\Delta)$ and from above $\left\{ \mu \in \mathcal{M} (\Delta) : E_V(\mu) \leq \alpha \right\}$ is compact for each $\alpha \in \mathbb{R}$, and also convex by the convexity of $E_V(\mu)$. The latter observation shows that for each $\alpha \in \mathbb{R}$ the set $\left\{\mu \in \mathcal{M}_{\theta} (\Delta) : E_V(\mu) \leq \alpha \right\}$
is a compact, convex subset of $\mathcal{M}_{\theta}(\Delta)$.

Summarizing the above, we have that $E_V(\mu)$ is strictly convex on the non-empty convex set $\mathcal{M}_{\theta}(\Delta)$, has compact and convex level sets in $\mathcal{M}_{\theta} (\Delta)$ and is not identically equal to $\infty$. This implies the existence and uniqueness of its minimizer $\me^{\theta} \in \mathcal{M}_{\theta}(\Delta)$. This proves part (2).\\

{\bf \raggedleft Step 2.} In this step we prove parts (3) and (4). 

From our work in Step 1, we know that $E_V(\mu)$ has compact level sets in $\mathcal{M}_\theta(\Delta)$ and since $\mathcal{M}_{\theta}(\Delta)$ is a non-empty closed subset of $\mathcal{M}(\mathbb{R})$ we conclude that $I_V^{\theta}(\mu)$ has compact level sets. In addition, since $F_V^{\theta}$ is finite from Step 1, we conclude that $I_V^{\theta}(\mu) \in [0, \infty]$. This proves that $I_V^{\theta}$ is a good rate function and completes part (3). In the remainder of this step we prove part (4).\\

Let $\mu \in \mathcal{M}_{\theta}(\Delta)$ be such that $E_V(\mu) < \infty$ and there is a constant $c \in \mathbb{R}$ such that 
\begin{equation}\label{S2DSIneq}
\begin{split}
&\int_\R \left(\log|x-y|^{-1}+\frac{1}{2}\log( 1+x^2) \right)\mu(dx) + \frac{1}{2} V(y) \geq c \mbox{, for a.e. $y \in \mathsf{Supp}( \theta^{-1} \lambda - \mu)$} \cap \Delta, \\
&\int_\R \left(\log|x-y|^{-1}+\frac{1}{2}\log( 1+x^2) \right)\mu(dx) + \frac{1}{2} V(y) \leq c \mbox{, for a.e. $y \in \mathsf{Supp}(\mu)$},
\end{split}
\end{equation}
where a.e. is with respect to the Lebesgue measure on $\mathbb{R}$. We seek to prove that $\mu = \me^{\theta}$. We mention that the integral appearing in (\ref{S2DSIneq}) is well-defined and takes value in $(-\infty, \infty]$, since the integrand is lower bounded in $x$ on $\mathbb{R}$ for each $y \in \mathbb{R}$ and $\mu$ is a probability measure. The existence of $c$, the continuity of $V$ and the fact that $\mu$ is a probability measure with density bounded by $\theta^{-1}$ together imply that the integral in (\ref{S2DSIneq}) is finite for each $y \in \mathbb{R}$.

By a direct computation using the definition of $T$ in Section \ref{Section2.2} and (\ref{S2Push}) we have for all $y \in \mathbb{R}$
\begin{equation*}
\begin{split}
&\int_\R \left(\log|x-y|^{-1}+\frac{1}{2}\log( 1+x^2) \right)\mu(dx) + \frac{1}{2} V(y) = \int_{\mathcal{S}} \log \| \vec{x} - T(y) \|^{-1}_2 (T_*\mu)(d\vec{x}) + \frac{1}{2} \mathcal{V}(T(y)),
\end{split}
\end{equation*}
where $\mathcal{V}(\vec{y})$ is as in (\ref{S2VtoV}). The last equation and the linearity of $T_*$ shows (\ref{S2DSIneq}) is equivalent to 
\begin{equation}\label{S2DSIneq2}
\begin{split}
& \int_{\mathcal{S}} \log \| \vec{x} - \vec{y} \|^{-1}_2 (T_*\mu)(d\vec{x}) + \frac{1}{2} \mathcal{V}(\vec{y}) \geq c \mbox{, for a.e. $\vec{y} \in \mathsf{Supp}(  \theta^{-1} T_* \lambda-T_*\mu)$} \cap T(\Delta), \\
& \int_{\mathcal{S}} \log \| \vec{x} - \vec{y} \|^{-1}_2 (T_*\mu)(d\vec{x})  +\frac{1}{2} \mathcal{V}(\vec{y})\leq c \mbox{, for a.e. $\vec{y} \in \mathsf{Supp}(T_*\mu) \cap T(\Delta)$},
\end{split}
\end{equation}
where the a.e. refers to the uniform measure on $\mathcal{S}$.

Let us write $\nu = T_*\mu$ and recall that $E_V(\mu)=E_{\mathcal{V}}(\nu)$ by (\ref{S2EFEq}). In particular, we have $  E_{\mathcal{V}}(\nu) < \infty$, which together with the fact that $\mathcal{V}$ is lower bounded implies that 
\begin{equation}\label{S2Split1}
I(\nu, \nu) < \infty, \hspace{2mm}\int_{\mathcal{S}}\mathcal{V}(\vec{y}) \nu(d\vec{y})< \infty \mbox{ and } E_{\mathcal{V}}(\nu) = I(\nu,\nu) + \int_{\mathcal{S}}\mathcal{V}(\vec{y}) \nu(d\vec{y}),
\end{equation}
where we recall $I(\mu,\nu) $ was defined in (\ref{S2MixedFunct}). Setting $\nu_{\mathsf{eq}} = T_* \me^{\theta}$ and using that $E_{\mathcal{V}}(\nu_{\mathsf{eq}})  = E_V(\me^{\theta}) = F_V^{\theta} < \infty$, we see that (\ref{S2Split1}) holds with $\nu = \nu_{\mathsf{eq}}$ as well. Using (\ref{S2Split1}) and (\ref{S2Ineq}), we have
\begin{equation*}
\begin{split}
&E_{\mathcal{V}}(\nu_{\mathsf{eq}}) - E_{\mathcal{V}}(\nu) = I(\nu_{\mathsf{eq}}, \nu_{\mathsf{eq}}) + I(\nu, \nu) + 2 \int_{\mathcal{S}} \left[ \int_{\mathcal{S}}  \log \| \vec{x} - \vec{y} \|^{-1}_2 \nu(d\vec{x}) +   \frac{1}{2} \mathcal{V}(\vec{y}) \right]  (\nu_{\mathsf{eq}} - \nu)(d\vec{y})   \\
&- 2 I( \nu_{\mathsf{eq}}, \nu)  \geq 2 \int_{\mathcal{S}} \left[ \int_{\mathcal{S}}  \log \| \vec{x} - \vec{y} \|^{-1}_2 \nu(d\vec{x}) +   \frac{1}{2} \mathcal{V}(\vec{y}) \right]  (\nu_{\mathsf{eq}} - \nu)(d\vec{y}).
\end{split}
\end{equation*}

To complete the proof it suffices to show that 
\begin{equation}\label{S2FC1}
\begin{split}
 \int_{\mathcal{S}} \left[ \int_{\mathcal{S}}  \log \| \vec{x} - \vec{y} \|^{-1}_2 \nu(d\vec{x}) +   \frac{1}{2} \mathcal{V}(\vec{y}) \right]  (\nu_{\mathsf{eq}} - \nu)(d\vec{y}) \geq 0.
\end{split}
\end{equation}
Indeed, if (\ref{S2FC1}) holds then the last two equations show that $E_{\mathcal{V}}(\nu_{\mathsf{eq}}) - E_{\mathcal{V}}(\nu) \geq 0$, which by (\ref{S2EFEq}) implies $E_V(\me^{\theta}) \geq E_V(\mu)$. As $\me^{\theta}$ is the unique minimizer of $E_V$ over $\mathcal{M}_{\theta}(\Delta)$ and $\mu \in \mathcal{M}_{\theta}(\Delta)$ by assumption we conclude that $\mu = \me^{\theta}$. 

Let us denote 
$$f(\vec{y}) = \int_{\mathcal{S}}  \log \| \vec{x} - \vec{y} \|^{-1}_2 \nu(d\vec{x}) +   \frac{1}{2} \mathcal{V}(\vec{y}) - c.$$
Using that $\int_{\mathcal{S}} c (\nu_{\mathsf{eq}} - \nu)(d\vec{y})  = 0 $, that $\mathsf{Supp}(\nu), \mathsf{Supp}(\nu_{\mathsf{eq}})  \subseteq \Delta_{\mathcal{S}}$, and the fact that $\nu(\{ \mathsf{np}\}) = 0 = \nu_{\mathsf{eq}}(\{ \mathsf{np}\})$ (see Lemma \ref{S2TDiff}), we see that 
\begin{equation}\label{S2FC2}
\begin{split}
& \int_{\mathcal{S}} \left[ \int_{\mathcal{S}}  \log \| \vec{x} - \vec{y} \|^{-1}_2 \nu(d\vec{x}) +   \frac{1}{2} \mathcal{V}(\vec{y}) \right]  (\nu_{\mathsf{eq}} - \nu)(d\vec{y}) = I_1 + I_2 \mbox{, where }\\
&I_1 = \int_{E_1} f(\vec{y})  (\nu_{\mathsf{eq}} - \nu)(d\vec{y}) , \hspace{2mm} I_2 = \int_{E_2} f(\vec{y})  (\nu_{\mathsf{eq}} - \nu)(d\vec{y}) \mbox{, with }\\
&E_1 = \{ \vec{y} \in T(\Delta): f(\vec{y}) > 0 \} \mbox{ and }E_2 = \{ \vec{y} \in T(\Delta): f(\vec{y}) < 0 \}.
\end{split}
\end{equation}
Since $\me^{\theta} \in \mathcal{M}_{\theta}(\Delta)$ we have $\theta^{-1} T_* \lambda \geq \nu_{\mathsf{eq}}$. In addition, from (\ref{S2DSIneq2}) we have $(  \theta^{-1} T_* \lambda - \nu)(E_2) = 0$. Combining the last two statements we get
\begin{equation}\label{S2FC3}
\begin{split}
I_2 = \int_{E_2} f(\vec{y})  (\nu_{\mathsf{eq}} - \theta^{-1} T_* \lambda + \theta^{-1} T_* \lambda - \nu)(d\vec{y}) = \int_{E_2} f(\vec{y})  (\nu_{\mathsf{eq}} - \theta^{-1} T_* \lambda )(d\vec{y}) \geq 0.
\end{split}
\end{equation}
Also, from (\ref{S2DSIneq2}) we have $\nu(E_1) = 0$, and so
\begin{equation}\label{S2FC4}
\begin{split}
I_1= \int_{E_1} f(\vec{y})  (\nu_{\mathsf{eq}} - \nu)(d\vec{y}) =\int_{E_1} f(\vec{y})  \nu_{\mathsf{eq}}(d\vec{y}) \geq 0.
\end{split}
\end{equation}
Combining (\ref{S2FC2}), (\ref{S2FC3}) and (\ref{S2FC4}) we obtain (\ref{S2FC1}). This suffices for the proof.

%
\section{LDP for $\mu_N$}\label{Section3} The goal of this section is to prove Theorem \ref{ThmMain}. The proof is presented in Section \ref{Section3.1} and relies on a certain weak LDP upper bound for the pushforward measures of $\mu_N$ under the map $T$ in Section \ref{Section2.2}, this is Lemma \ref{S3WLDP}, as well as two technical results -- Lemmas \ref{ConvL} and \ref{CompactL}. Lemma \ref{S3WLDP} is proved in Section \ref{Section3.2} by adapting some of the arguments from \cite{fe, Hardy, jo}, while Lemmas \ref{ConvL} and \ref{CompactL} are proved in Section \ref{Section3.3}. We continue with the same notation as in Sections \ref{Section1.2} and \ref{Section2}.

%
\subsection{Proof of Theorem \ref{ThmMain}}\label{Section3.1} We begin this section by stating some results, which will be used in the proof of Theorem \ref{ThmMain}. 

The following lemma establishes a weak LDP upper bound for the measures $\{T_* \mu_N\}_{ N \geq 1}$, where $T_*$ is as in (\ref{S2Push}) and $\mu_N$ are as in (\ref{S1DEM}), and is proved in Section \ref{Section3.2}.
\begin{lemma}\label{S3WLDP} Continue with the same notation from Theorem \ref{ThmMain} and suppose that the same assumptions hold. Define
\begin{equation}\label{S3PF}
Z_N' = Z_N \cdot N^{-N(N-1)\theta},
\end{equation}
where $Z_N$ is as in (\ref{DBEPF}). Then, for any $\mu \in \mathcal{M}(\mathcal{S})$ we have
\begin{equation}\label{WLDPUB}
\limsup_{\delta \rightarrow 0+} \limsup_{N \rightarrow \infty} \frac{1}{N^2} \log \left( Z_N' \mathbb{P}_N^{\theta} \left( T_* \mu_N \in B(\mu,\delta) \right) \right) \leq \begin{cases}  - \theta \cdot E_{\mathcal{V}}(\mu) &\mbox{ if $\mu \in T_*( \mathcal{M}_{\theta}(\Delta))$ } \\ -\infty &\mbox{ if  $\mu \not \in T_*( \mathcal{M}_{\theta}(\Delta))$ }\end{cases},
\end{equation}
where $B(\mu,\delta) = \{\rho \in \mathcal{M}(\mathcal{S}): d_2(\rho, \mu) < \delta\}$ (here $d_2$ is L{\'e}vy metric as in (\ref{LevyMetric})), $\mu_N$ are as in (\ref{S1DEM}), $T_*$ is as in (\ref{S2Push}) and $E_{\mathcal{V}}$ is as in (\ref{S2EF}) with $\mathcal{V}$ as in (\ref{S2VtoV}). In (\ref{WLDPUB}) we use $\log 0 = -\infty$.
\end{lemma}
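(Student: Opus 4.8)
The plan is to obtain the weak upper bound by the standard route for log-gases, adapted to the discrete, compactified setting. First I would fix $\mu \in \mathcal{M}(\mathcal{S})$ and write, for $(\ell_1,\dots,\ell_N)$ distributed by $\mathbb{P}_N^\theta$,
\begin{equation*}
Z_N' \, \mathbb{P}_N^\theta\!\left(T_*\mu_N \in B(\mu,\delta)\right) = N^{-N(N-1)\theta} \sum_{\substack{\ell \in \mathbb{W}_N^\theta(a_N,b_N)\\ T_*\mu_N(\ell) \in B(\mu,\delta)}} \prod_{i<j} Q_\theta(\ell_i-\ell_j) \prod_{i=1}^N e^{-\theta N V_N(\ell_i/N)}.
\end{equation*}
Using Lemma \ref{InterApprox} to replace $Q_\theta(\ell_i-\ell_j)$ by $(\ell_i-\ell_j)^{2\theta}$ up to a multiplicative error $\exp(\pm (1+\theta)^3 \sum_{i<j}(\ell_i-\ell_j)^{-1})$, and noting as in the proof of Lemma \ref{WellDef} that this last sum is $O(N\log N) = o(N^2)$, the $Q_\theta$-product becomes $N^{N(N-1)\theta}\prod_{i<j}(\ell_i/N - \ell_j/N)^{2\theta} \cdot e^{o(N^2)}$, which cancels the prefactor $N^{-N(N-1)\theta}$. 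So the quantity to bound is, up to $e^{o(N^2)}$,
\begin{equation*}
\sum_{\substack{\ell:\, T_*\mu_N(\ell)\in B(\mu,\delta)}} \exp\!\left( 2\theta \sum_{i<j}\log|\ell_i/N - \ell_j/N| - \theta N \sum_{i=1}^N V_N(\ell_i/N) \right).
\end{equation*}

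Next I would express the exponent in terms of $\mu_N$ and the kernel $k_V$, being careful about the diagonal. Writing $\sum_{i<j}\log|\ell_i/N-\ell_j/N| = \tfrac{N^2}{2}\iint_{x\neq y}\log|x-y|\,\mu_N(dx)\mu_N(dy)$ and $N\sum_i V_N(\ell_i/N) = N^2\int V_N\,d\mu_N$, the exponent is $-\theta N^2 E_{V_N}^{\neq}(\mu_N) + (\text{error})$, where $E^{\neq}$ excludes the diagonal; here I would use Assumption 2 to replace $V_N$ by $V$ uniformly (case (a)) or uniformly on compacts together with the growth bound \eqref{FeGrowth} (case (b)) to control the tails — this tail control under (b) is where the argument from \cite{fe, jo} enters. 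The off-diagonal logarithm is handled by the usual truncation: since $\log|x-y|$ is bounded above on $\mathcal{S}$ after applying $T$ (indeed $\|\vec x-\vec y\|_2\le 1$), one replaces $\log|x-y|$ by $\min(\log|x-y|, M)$, pushes everything to $\mathcal{S}$ via \eqref{S2EFEq}-type identities, and uses that $\mu_N \mapsto \iint (\log\|\vec x-\vec y\|_2^{-1}\wedge M)^{\,}$ composed with $T_*$ is continuous, hence on $B(\mu,\delta)$ is close to its value at $\mu$; then let $\delta\to 0$ and $M\to\infty$ and invoke monotone convergence to recover $E_{\mathcal V}(\mu)$. The cardinality of the set of $\ell$'s with $T_*\mu_N \in B(\mu,\delta)$ inside the relevant bounded region is at most $e^{o(N^2)}$ (it is bounded by a polynomial number of choices to the power $N$), which contributes nothing on the $N^2$ scale; this combinatorial bound replaces the volume estimates used in the continuous case.

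Finally, the dichotomy in \eqref{WLDPUB}: if $\mu \notin T_*(\mathcal{M}_\theta(\Delta))$ then either $\mu$ charges $\mathsf{np}$, or $\mu$ is not supported in $\Delta_\mathcal{S}$, or its pullback is not absolutely continuous with density $\le \theta^{-1}$. In the first case $E_{\mathcal V}(\mu)=\infty$ and the bound is automatic from the previous paragraph. For the support condition, if $\mu$ puts mass outside $\Delta_\mathcal{S}$ then for small $\delta$ the event $\{T_*\mu_N\in B(\mu,\delta)\}$ is empty because $\ell_i/N \in [a_N/N, b_N/N + \text{(shift)}] \to \Delta$, giving $\log 0 = -\infty$. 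The density constraint $\le \theta^{-1}$ is the discrete analogue of the fact that the $\ell_i$ are spaced at least $\theta$ apart (since $\ell_i - \ell_{i+1} = \lambda_i - \lambda_{i+1} + \theta \ge \theta$); a standard packing/entropy argument shows that any weak limit of $\mu_N$ automatically lies in $\mathcal{M}_\theta(\Delta)$, so again the event is eventually empty for $\mu$ violating this, forcing $-\infty$. I expect the main obstacle to be making the tail estimates uniform under Assumption 2(b) — i.e. showing that particles $\ell_i/N$ far from the origin contribute negligibly to both the energy and the entropy when $V_N$ only satisfies the $N$-uniform lower growth \eqref{FeGrowth} rather than converging uniformly — and, relatedly, keeping track of the $N$-dependence of $\Delta_N := [a_N/N, b_N/N+(N-1)\theta/N\cdot 0]$ versus the limiting $\Delta$; this is precisely the point where the cited arguments must be modified rather than quoted.
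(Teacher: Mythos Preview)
Your overall strategy matches the paper's: replace $Q_\theta$ by $(\ell_i-\ell_j)^{2\theta}$ via Lemma~\ref{InterApprox} at cost $e^{O(N\log N)}$, push to $\mathcal{S}$ through $T$, truncate $F_{\mathcal V}$ from below by continuous $F_{\mathcal V}^M$, take $\delta\to 0$ then $M\to\infty$ with monotone convergence; and for the dichotomy, use $\mu(\{\mathsf{np}\})>0 \Rightarrow E_{\mathcal V}(\mu)=\infty$, and otherwise argue that $\mu_N$ is deterministically close to $\mathcal{M}_\theta(\Delta)$ so the event is empty for small $\delta$. All of this is correct and is exactly what the paper does.

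There is one genuine gap. Your step ``the cardinality of the set of $\ell$'s \dots\ inside the relevant bounded region is at most $e^{o(N^2)}$'' does not work when $\Delta$ is unbounded: the event $\{T_*\mu_N\in B(\mu,\delta)\}$ does \emph{not} confine the particles to a bounded region (a fixed positive fraction of the $\ell_i/N$ can run off to infinity while $T_*\mu_N$ stays $\delta$-close to $\mu$), so the sum over admissible $\ell$ is infinite and there is no combinatorial bound to quote. This issue is present already under Assumption~2(a), not only under~2(b).

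The paper resolves this by \emph{not} putting all of the potential into the energy functional. Under Assumption~2(a) (Section~\ref{Section3.2.1}) one writes $\theta N V_N(\ell_i/N) = \theta(N-1)\bigl(V(\ell_i/N)-\log(1+(\ell_i/N)^2)\bigr) + \theta V(\ell_i/N) + O(\epsilon_N N)$; the first piece combines with the interaction (via the identity $\|T(x)-T(y)\|_2 = |x-y|/\sqrt{(1+x^2)(1+y^2)}$) to produce $F_{\mathcal V}(\vec z_i,\vec z_j)$, while the leftover $\prod_i e^{-\theta V(\ell_i/N)}$ is kept inside the sum and bounded by $\sum_{\ell}\prod_i e^{-\theta V(\ell_i/N)} \le e^{A_2 N\log N}$ using the $\theta'>1/2$ growth from \eqref{VNgrowth}. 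Under Assumption~2(b) (Section~\ref{Section3.2.2}) the same idea appears as a $\rho$-split: a $(1-\rho)$-fraction of $\sum_{i\neq j}k_{V_N}$ goes into the energy, and the remaining $\rho$-fraction yields, via \eqref{FeGrowth}, a weight $\prod_i e^{-\rho\theta\xi(N-1)\log(1+(\ell_i/N)^2)}$ whose sum over all of $\mathbb{W}_N^\theta$ is again $e^{O(N\log N)}$; one lets $\rho\to 0$ at the end. In short, a piece of the one-body potential (or of the interaction) must be held back to make the sum over configurations converge; that is the replacement for your cardinality argument.
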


We next state two technical lemmas, which will be required. They are proved in Section \ref{Section3.3}.
\begin{lemma}\label{ConvL}
Let $\theta > 0$, $A > \theta/2$, $\mu^{\infty} \in \mathcal{M}_{\theta}([-A,A])$. Suppose that $\ell^N \in \mathbb{W}_N^{\theta}(-\infty, \infty)$ are such that if $\mu^N = N^{-1} \sum_{i = 1}^N \delta_{\ell^N_i/N}$ we have
\begin{enumerate}
\item $\lim_{N \rightarrow \infty} d_1(\mu^N, \mu^{\infty}) = 0$, where $d_1$ is the L{\'e}vy metric on $\mathcal{M}(\mathbb{R})$ as in (\ref{LevyMetric});
\item $\mu_N \in \mathcal{M}([-A,A])$.
\end{enumerate}  
Let $V_N$ be continuous functions on $[-A,A]$ for $N \in \mathbb{N} \cup \{\infty\}$, such that $\lim_{N \rightarrow \infty} \sup_{[-A,A]}|V_N(x) - V_{\infty}(x)| = 0$.
Then, we have
\begin{equation}\label{JK1}
\lim_{N \rightarrow \infty} \iint_{\mathbb{R}^2} {\bf 1}\{ x \neq y \} k_{V_N}(x,y) \mu^N(dx) \mu^N(dy) = E_V(\mu^{\infty}),
\end{equation}
where $k_V$ and $E_V$ are as in (\ref{S1EF}).
\end{lemma}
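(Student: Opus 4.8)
The plan is to decompose the kernel $k_{V_N}(x,y) = \log|x-y|^{-1} + \tfrac12 V_N(x) + \tfrac12 V_N(y)$ into its ``potential part'' and its ``logarithmic interaction part'', and handle each separately. For the potential part, $\iint {\bf 1}\{x\neq y\}(\tfrac12 V_N(x)+\tfrac12 V_N(y))\,\mu^N(dx)\mu^N(dy)$: since all $\mu^N$ and $\mu^\infty$ are supported in the fixed compact $[-A,A]$ where $V_N \to V_\infty$ uniformly and $V_\infty$ is continuous, and since $\mu^N \to \mu^\infty$ weakly, the off-diagonal indicator contributes a vanishing error (the diagonal has $\mu^N\otimes\mu^N$-mass $N \cdot N^{-2} = N^{-1} \to 0$), so this part converges to $\int V_\infty\,d\mu^\infty$. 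The content is therefore entirely in the logarithmic term: I must show $\iint {\bf 1}\{x\neq y\}\log|x-y|^{-1}\mu^N(dx)\mu^N(dy) \to \iint \log|x-y|^{-1}\mu^\infty(dx)\mu^\infty(dy)$.

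For the logarithmic term the natural strategy is a two-sided bound. The function $\log|x-y|^{-1}$ is bounded below on $[-A,A]^2$ and is upper semicontinuous on $\mathbb{R}^2$ after removing the diagonal singularity; truncating from below, $\log^M|x-y|^{-1} := \min(\log|x-y|^{-1}, M)$ is bounded and continuous, so $\iint \log^M|x-y|^{-1}\mu^N\mu^N \to \iint \log^M|x-y|^{-1}\mu^\infty\mu^\infty$ by weak convergence of $\mu^N\otimes\mu^N$, and letting $M\to\infty$ (monotone convergence on the limit side) gives the $\liminf$ inequality
\[
\liminf_{N\to\infty}\iint {\bf 1}\{x\neq y\}\log|x-y|^{-1}\mu^N(dx)\mu^N(dy) \;\ge\; \iint \log|x-y|^{-1}\mu^\infty(dx)\mu^\infty(dy).
\]
For the matching $\limsup$ upper bound I exploit the structure of $\mu^N$ coming from $\ell^N \in \mathbb{W}_N^\theta$: the points $\ell^N_i/N$ have pairwise gaps bounded below, $\ell^N_i/N - \ell^N_j/N \ge (i-j)\theta/N$ for $i<j$, so $\sum_{i\ne j}\log|\ell^N_i/N - \ell^N_j/N|^{-1} \le 2\sum_{1\le j<i\le N}\log\big(N/((i-j)\theta)\big)$, which after dividing by $N^2$ and using $\log(n!) \sim n\log n$ is $O(\log N / N) \to 0$; more to the point, since $\mu^\infty \in \mathcal{M}_\theta([-A,A])$ has density bounded by $\theta^{-1}$, its logarithmic energy is finite, and one can compare $\iint{\bf 1}\{x\ne y\}\log|x-y|^{-1}\mu^N\mu^N$ to $\iint\log|x-y|^{-1}\mu^\infty\mu^\infty$ by a regularization/convolution argument: smear each atom of $\mu^N$ over an interval of length $\theta/N$ to produce $\tilde\mu^N \in \mathcal{M}_\theta$, control $|\iint{\bf 1}\{x\ne y\}\log|x-y|^{-1}\mu^N\mu^N - \iint\log|x-y|^{-1}\tilde\mu^N\tilde\mu^N|$ by the gap estimate, and then use lower semicontinuity of the logarithmic energy together with $\tilde\mu^N \to \mu^\infty$ weakly. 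Either route yields $\limsup_{N\to\infty}\iint{\bf 1}\{x\ne y\}\log|x-y|^{-1}\mu^N\mu^N \le \iint\log|x-y|^{-1}\mu^\infty\mu^\infty$, and combined with the $\liminf$ bound this gives convergence.

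I expect the main obstacle to be the $\limsup$ (upper) bound for the logarithmic term, since $\log|x-y|^{-1}$ blows up on the diagonal and weak convergence alone gives only the lower bound; the discreteness of $\mu^N$ (its minimal gap $\theta/N$) and the density constraint $\mu^\infty \in \mathcal{M}_\theta$ are exactly what is needed to push through, so the bookkeeping of the regularization step — ensuring the smeared measure stays in $\mathcal{M}_\theta([-A',A'])$ for a slightly enlarged interval and that the error terms are genuinely $o(1)$ uniformly — is the delicate part. Everything else (the potential term, the diagonal mass estimate, the truncation argument) is routine given weak convergence on the fixed compact set and the uniform convergence $V_N \to V_\infty$.
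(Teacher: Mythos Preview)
Your decomposition into the potential part and the logarithmic part, and your handling of the potential part and of the ``easy'' inequality (your $\liminf$ for $\log|x-y|^{-1}$, the paper's (\ref{JK3})) via truncation, are correct and match the paper exactly.

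The gap is in your ``hard'' direction, the $\limsup$ upper bound for $\iint{\bf 1}\{x\ne y\}\log|x-y|^{-1}\mu^N\mu^N$. Two problems. First, the throwaway computation in your Route~1 is wrong: $2N^{-2}\sum_{1\le j<i\le N}\log\bigl(N/((i-j)\theta)\bigr) = 2N^{-2}\sum_{k=1}^{N-1}(N-k)\log(N/(k\theta))$ is $O(1)$, not $O(\log N/N)$ (Stirling gives a finite positive limit). And even if it did vanish, it would be saying the \emph{entire} off-diagonal logarithmic energy of $\mu^N$ tends to $0$, which is neither true nor what you want. Second, and more seriously, in your Route~2 you invoke ``lower semicontinuity of the logarithmic energy'' for $\tilde\mu^N\to\mu^\infty$. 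Lower semicontinuity yields $\liminf_N \iint\log|x-y|^{-1}\,\tilde\mu^N\tilde\mu^N \ge \iint\log|x-y|^{-1}\,\mu^\infty\mu^\infty$, which is the \emph{wrong direction} for the upper bound you need. What actually closes the argument is a uniform-integrability step: since every $\tilde\mu^N$ has density $\le\theta^{-1}$, the near-diagonal mass $\iint{\bf 1}\{|x-y|<\delta\}\log|x-y|^{-1}\tilde\mu^N\tilde\mu^N$ is dominated by $\theta^{-2}\iint_{[-A',A']^2}{\bf 1}\{|x-y|<\delta\}\log|x-y|^{-1}\,dx\,dy$, which is small uniformly in $N$; the far-diagonal part then converges by weak convergence.

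The paper bypasses the regularization entirely. For its (\ref{JK4}) it introduces a smooth cutoff $g_M$ equal to $1$ on $\{|x-y|\ge M^{-1}\}$ and $0$ on $\{|x-y|\le (2M)^{-1}\}$, handles the far part $\iint g_M\log|x-y|\,\mu^N\mu^N$ by continuity and weak convergence, and bounds the near-diagonal contribution directly from the atoms: using $|\ell_i^N-\ell_j^N|\ge|i-j|\theta$, only indices with $|i-j|\le K_{M,N}:=\lfloor N/(\theta M)\rfloor$ survive the cutoff, and Stirling gives $N^{-2}\sum_{i\ne j}{\bf 1}\{|\ell_i^N/N-\ell_j^N/N|\le M^{-1}\}\log|\ell_i^N/N-\ell_j^N/N| \ge \frac{2}{\theta M}\log\frac{1}{\theta M}+O(M^{-1})$ after $N\to\infty$, which vanishes as $M\to\infty$. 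This is exactly your ``gap estimate'' idea, but applied only to the near-diagonal piece rather than to the whole sum, which is what makes it work.
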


\begin{lemma}\label{CompactL}
Let $a \in [-\infty, \infty)$ and $b \in (-\infty, \infty]$ be such that $a < b$, $\theta > 0$, and set $\Delta = [a,b + \theta] \cap \mathbb{R}$. Let $V$ be a continuous function on $\mathbb{R}$ that satisfies (\ref{S1VGrowth}). If $\mu \in \mathcal{M}_{\theta}(\Delta)$, then we can find a sequence of measures $\mu_n \in \mathcal{M}_{\theta}(\mathbb{R})$ such that
\begin{enumerate}
\item $\lim_{n \rightarrow \infty} d_1(\mu_n, \mu) = 0$, where $d_1$ is the L{\'e}vy metic on $\mathcal{M}(\mathbb{R})$ as in (\ref{LevyMetric});
\item $\lim_{n \rightarrow \infty} E_V(\mu_n) = E_V(\mu)$, where $E_V$ is as in (\ref{S1EF});
\item for each $n \in \mathbb{N}$, we have that $\mathsf{Supp}(\mu_n)$ is compact and contained in the interior of $\Delta$. 
\end{enumerate} 
\end{lemma}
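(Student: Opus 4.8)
The plan is to prove Lemma \ref{CompactL} by successively modifying $\mu$ in three stages, each of which preserves the density bound $\theta^{-1}$, moves the support strictly inside $\Delta$, and perturbs $E_V$ by an arbitrarily small amount. First I would handle the two ``ends'' of the interval $\Delta = [a, b+\theta] \cap \R$ separately, dealing with whichever of them are infinite or finite.

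Step 1 (truncation at infinite ends). Suppose $a = -\infty$ (the case $b = +\infty$ is symmetric, and if both are finite this step is vacuous). For $R > 0$ large, consider the measure obtained by removing the mass of $\mu$ outside $[-R, \infty)$ and redistributing it. Concretely, let $m_R = \mu((-\infty,-R))$ and define $\mu^{(1)}_R$ by keeping $\mu$ restricted to $[-R+\theta, \infty) \cap \Delta$ unchanged on a suitable part and placing the removed mass $m_R$ as a uniform density $\theta^{-1}\mathbf{1}_{[c_R, c_R+\theta m_R]}$ on a short interval $[c_R, c_R + \theta m_R]$ located just inside the finite portion of $\Delta$ (e.g. near a fixed interior point) — one must check $\theta^{-1} \cdot \theta m_R = m_R$ so this is a valid probability measure with density $\le \theta^{-1}$, and that the new interval can be fitted disjointly from the bulk, shrinking with $m_R \to 0$. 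Since $\mu \in \mathcal{M}_\theta(\Delta)$, $m_R \to 0$ as $R \to \infty$, so $d_1(\mu^{(1)}_R, \mu) \to 0$. For the energy: using $k_V(x,y) = \log|x-y|^{-1} + \tfrac12 V(x) + \tfrac12 V(y) \ge k_V(x,y) \ge$ (lower bound coming from \eqref{S1VGrowth} and \eqref{SQLog}), one shows $\iint k_V \, d\mu\, d\mu < \infty$ forces the tail contributions $\iint_{\{x < -R\} \cup \{y<-R\}} k_V \, d\mu\,d\mu \to 0$; the added small uniform block contributes $o(1)$ to the energy because its total mass is $m_R \to 0$ and $V$, $\log|\cdot|^{-1}$ are integrable against it uniformly (the self-energy of a uniform measure of mass $m_R$ on an interval of length $\theta m_R$ is $O(m_R^2 \log(1/m_R))$, and its interaction with the bulk is $O(m_R)$). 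Hence $E_V(\mu^{(1)}_R) \to E_V(\mu)$. After this step we may assume $\Delta$ is a bounded interval $[\alpha, \beta]$.

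Step 2 (scaling away from finite endpoints). Now $\mathsf{Supp}(\mu) \subseteq [\alpha,\beta]$ with $\beta - \alpha \ge \theta$. Apply the affine contraction $\phi_s(x) = \alpha + \tfrac{\beta-\alpha}{2} + s\big(x - \alpha - \tfrac{\beta-\alpha}{2}\big)$ toward the midpoint with $s \in (0,1)$ close to $1$, or more carefully a contraction that maps $[\alpha,\beta]$ into $(\alpha,\beta)$; push $\mu$ forward to $\mu_s := (\phi_s)_*\mu$. The density of $\mu_s$ is $s^{-1}$ times the density of $\mu$ evaluated at $\phi_s^{-1}$, which exceeds $\theta^{-1}$ — so a pure contraction is \emph{not} allowed. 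Instead one should convolve/mix: replace $\mu$ by $(1-\eta)\mu + \eta \lambda_{[\alpha,\beta]}/(\beta-\alpha)$ first if needed to gain room, or better, use the standard device of \emph{spreading} the measure. The cleanest route: since $\mu$ has density $\le \theta^{-1}$ on $[\alpha,\beta]$ with $\beta-\alpha \ge \theta$, for small $\eta > 0$ the measure $\mu_\eta$ defined by shrinking the ``free space'' — formally, writing $\mu$ via its distribution function $F$ and setting $G(y) = (1-\eta) F\big(\text{(affine image)}\big)$ so that $\mu_\eta$ still has density bounded by $\theta^{-1}$ but support in $[\alpha + \eta', \beta - \eta']$ for some $\eta' = \eta'(\eta) > 0$ — works. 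Then $d_1(\mu_\eta, \mu) \to 0$ and, since the transformation is a uniformly small perturbation supported on a fixed compact set and $k_V$ is continuous off the diagonal with integrable singularity, dominated/uniform-continuity arguments give $E_V(\mu_\eta) \to E_V(\mu)$; here one invokes exactly the kind of continuity quantified in Lemma \ref{ConvL}.

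Step 3 (conclusion and combining). Combining the estimates, for any $\epsilon > 0$ choose $R$ large (Step 1) then $\eta$ small (Step 2) to produce $\mu_n \in \mathcal{M}_\theta(\R)$ with $d_1(\mu_n,\mu) < 1/n$, $|E_V(\mu_n) - E_V(\mu)| < 1/n$, and $\mathsf{Supp}(\mu_n)$ compact and contained in the interior of $\Delta$; relabeling along a diagonal subsequence gives the claimed sequence. I expect the main obstacle to be Step 1 when an endpoint of $\Delta$ is infinite: one must relocate escaping tail mass into the interior \emph{without} violating the density bound $\theta^{-1}$, which is why the removed mass $m_R$ must be placed as a uniform block of length exactly $\theta m_R$, and one must verify such a block fits disjointly inside $\Delta$ for all large $R$ and that its self-energy and cross-energy with the bulk vanish — the self-energy estimate $O(m_R^2 \log(1/m_R)) \to 0$ is the delicate quantitative point, together with checking the tail energy $\iint_{\max(|x|,|y|) > R} |k_V| \, d\mu\, d\mu \to 0$, which follows from $E_V(\mu) < \infty$ (if $E_V(\mu) = \infty$ the statement $\lim E_V(\mu_n) = E_V(\mu)$ is interpreted in $[0,\infty]$ and one instead arranges $E_V(\mu_n) \to \infty$, e.g. by noting the truncation can only be done while $E_V$ stays large, or more simply observing the claim is only needed for finite-energy $\mu$ in the application).
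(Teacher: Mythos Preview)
Your proposal has a genuine gap in Step 1, and Step 2 is left too vague to be assessed.

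In Step 1 you propose to put the removed tail mass $m_R$ back as a uniform block of density exactly $\theta^{-1}$ on an interval $[c_R, c_R+\theta m_R]$, and you note this must be ``fitted disjointly from the bulk''. But there is no reason such a disjoint interval exists: the measure $\mu$ may have strictly positive density on \emph{all} of $\Delta$ (the Cauchy measure on $\R$, which is in $\mathcal{M}_\theta(\R)$ for $\theta\le\pi$, is exactly such an example, and is the target application in Section~\ref{Section4.2}). If you overlay your block on a region where $f>0$, the resulting density exceeds $\theta^{-1}$ and the measure leaves $\mathcal{M}_\theta(\R)$. The paper's fix for this is the key idea you are missing: one shows that the set $A_\epsilon^c=\{x\in[c-\epsilon_1,d+\theta+\epsilon_1]:f(x)\le\theta^{-1}-\epsilon\}$ has Lebesgue measure at least $\epsilon$ for some small $\epsilon>0$ (because $\mu$ has total mass $1$ while $\theta^{-1}$ times the length of that interval exceeds $1$), and then spreads the removed mass $\rho_n$ uniformly over the \emph{set} $A_\epsilon^c$, not over an interval. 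Since $\rho_n/\lambda(A_\epsilon^c)\le\epsilon$ once $\rho_n\le\epsilon^2$, the combined density stays below $\theta^{-1}$. This works whether or not $\mathsf{Supp}(\mu)$ has gaps.

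Step 2 has the same underlying difficulty and you essentially acknowledge not having resolved it: contraction raises density, mixing with the uniform does not shrink support, and your distribution-function formula $G(y)=(1-\eta)F(\cdot)$ does not define a probability measure. The paper avoids splitting into cases altogether: it takes $[a_n,b_n]\nearrow\Delta$ with $[a_n,b_n]$ strictly in the interior, restricts $\mu$ to $[a_n,b_n]$, and redistributes the deficit $\rho_n=1-\mu([a_n,b_n])$ onto $A_\epsilon^c$ as above. This single construction simultaneously handles finite and infinite endpoints. For the energy convergence the paper uses the compactified functional $E_{\mathcal{V}}$: lower semicontinuity gives $\liminf E_V(\mu_n)\ge E_V(\mu)$, and writing $\mu_n=(1-\rho_n)\mu_n^1+\rho_n\mu^2$ together with the convexity from Proposition~\ref{S2GRF} gives $\limsup E_V(\mu_n)\le E_V(\mu)$, avoiding the ad hoc self- and cross-energy estimates you sketch.
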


With the above results in place we are ready to prove Theorem \ref{ThmMain}.
\begin{proof}[Proof of Theorem \ref{ThmMain}] The proof we present here is an adaptation of the proof of \cite[Theorem 1.1(c,d)]{Hardy}.
For clarity, we split the proof into three steps. \\

{\bf \raggedleft Step 1.} Note that it is enough to show that for any closed set $\mathcal{F} \subseteq \mathcal{M}(\mathbb{R})$,
\begin{equation}\label{GH1}
\limsup_{N \rightarrow \infty} \frac{1}{N^2} \log \left( Z_N' \mathbb{P}_N^{\theta} \left( \mu_N \in \mathcal{F} \right) \right) \leq  - \theta \cdot  \inf_{\mu \in \mathcal{F} \cap \mathcal{M}_{\theta}(\Delta) } E_V(\mu),
\end{equation}
and for any open set $\mathcal{O} \subseteq \mathcal{M}(\mathbb{R})$,
\begin{equation}\label{GH2}
\liminf_{N \rightarrow \infty} \frac{1}{N^2} \log \left( Z_N' \mathbb{P}_N^{\theta} \left( \mu_N \in \mathcal{O} \right) \right) \geq -  \theta \cdot \inf_{\mu \in \mathcal{O} \cap \mathcal{M}_{\theta}(\Delta) } E_V(\mu),
\end{equation}
where $Z_N'$ is as in (\ref{S3PF}). Indeed, if we take $\mathcal{F} = \mathcal{O} = \mathcal{M}(\mathbb{R})$ in (\ref{GH1}) and (\ref{GH2}) we get
$$\lim_{N \rightarrow \infty} \frac{1}{N^2} \log Z_N' = - \theta \cdot \inf_{\mu \in \mathcal{M}_{\theta}(\Delta)} E_V(\mu) = - \theta F_V^{\theta},$$
where the latter was defined in Theorem \ref{ThmFunct} and is finite. Combining the last equality with (\ref{GH1}), (\ref{GH2}), and the definition of $I_V^{\theta}$ in (\ref{S1Rate}), we conclude the statement of the theorem.

In the remainder of this step we establish (\ref{GH1}) and in Step 2 we prove (\ref{GH2}). The approach we take to proving (\ref{GH1}) is inspired by the proof of \cite[Theorem 4.1.1]{DZ}.
\\

Since (\ref{GH1}) is clear when $\mathcal{F} = \emptyset$, we assume that $\mathcal{F} \neq \emptyset$ is a closed subset of $\mathcal{M}(\mathbb{R})$. Then,
\begin{equation}\label{GH3}
\mathbb{P}_N^{\theta} \left( \mu_N \in \mathcal{F} \right) \leq \mathbb{P}_N^{\theta} \left( T_* \mu_N \in \mathrm{clo} ( T_*\mathcal{F}) \right), 
\end{equation}
where $\mathrm{clo}(T_*\mathcal{F})$ is the closure of $T_*\mathcal{F}$ in $\mathcal{M}(\mathcal{S})$. Let us fix $\epsilon > 0$ and introduce 
$$E_{\mathcal{V}}^\epsilon(\mu) = \begin{cases} \min \left( E_{\mathcal{V}}(\mu) - \epsilon, \epsilon^{-1} \right) &\mbox{ if } \mu \in T_*( \mathcal{M}_{\theta}(\Delta)), \\ \epsilon^{-1}& \mbox{ if } \mu \not \in T_*( \mathcal{M}_{\theta}(\Delta)).  \end{cases}$$
Then, from Lemma \ref{S3WLDP} for every $\mu \in \mathcal{M}(\mathcal{S})$ we can find $\delta_\mu > 0$ such that
\begin{equation}\label{GH4}
\limsup_{N \rightarrow \infty} \frac{1}{N^2} \log \left( Z_N' \mathbb{P}_N^{\theta} \left( T_* \mu_N \in B(\mu,\delta_{\mu}) \right) \right) \leq  - \theta E_{\mathcal{V}}^\epsilon(\mu).
\end{equation}
Since $\mathcal{M}(\mathcal{S})$ is compact, so is $\mathrm{clo}(T_*\mathcal{F})$, and thus we can find a finite number of measures $\nu_1, \dots, \nu_d \in \mathrm{clo}(T_*\mathcal{F})$, such that 
$$\mathbb{P}_N^{\theta} \left( T_* \mu_N \in \mathrm{clo} ( T_*\mathcal{F}) \right) \leq \sum_{i = 1}^d \mathbb{P}_N^{\theta} \left( T_* \mu_N \in B(\nu_i,\delta_{\nu_i}) \right).$$
Combining the latter with (\ref{GH3}) and (\ref{GH4}), we conclude that 
\begin{equation}\label{GH5}
\begin{split}
&\limsup_{N \rightarrow \infty} \frac{1}{N^2} \log \left( Z_N' \mathbb{P}_N^{\theta} \left( \mu_N \in \mathcal{F} \right) \right) \leq \max_{i = 1}^d \limsup_{N \rightarrow \infty} \frac{1}{N^2} \log \left( Z_N' \mathbb{P}_N^{\theta} \left( T_* \mu_N \in B(\nu_i,\delta_{\nu_i}) \right) \right) \\
&\leq  -\theta  \min_{i =1,\dots, d}  E_{\mathcal{V}}^\epsilon(\nu_i) \leq - \theta \inf_{\nu \in \mathrm{clo} ( T_*\mathcal{F}) } E_{\mathcal{V}}^{\epsilon}(\nu).
\end{split}
\end{equation}
Letting $\epsilon \rightarrow 0+$ in (\ref{GH5}), we obtain
\begin{equation*}
\begin{split}
&\limsup_{N \rightarrow \infty} \frac{1}{N^2} \log \left( Z_N' \mathbb{P}_N^{\theta} \left( \mu_N \in \mathcal{F} \right) \right) \leq - \theta \inf_{\nu \in \mathrm{clo} ( T_*\mathcal{F}) \cap T_*(\mathcal{M}_{\theta}(\Delta)) } E_{\mathcal{V}}(\nu) .
\end{split}
\end{equation*}
Finally, since $T_*$ is a homeomorphism between $\mathcal{M}(\mathbb{R})$ and $\{ \nu \in \mathcal{M}(\mathcal{S}): \nu (\{\mathsf{np}\}) = 0 \}$, we have
$$   \inf_{\nu \in \mathrm{clo} ( T_*\mathcal{F}) \cap T_*(\mathcal{M}_{\theta}(\Delta)) } E_{\mathcal{V}}(\nu) = \inf_{\nu \in  T_*\mathcal{F} \cap T_*(\mathcal{M}_{\theta}(\Delta)) } E_{\mathcal{V}}(\nu) =  \inf_{\mu \in  \mathcal{F} \cap \mathcal{M}_{\theta}(\Delta)} E_{V}(\mu),$$
where the last equality used (\ref{S2EFEq}). The last two equations imply (\ref{GH1}).\\

{\bf \raggedleft Step 2.} In this step we prove (\ref{GH2}). Note that it suffices to show that for each $\mu \in \mathcal{M}_{\theta}(\Delta)$ and open neighborhood $\mathcal{G} \subseteq \mathcal{M}(\mathbb{R})$, containing $\mu$, we have
\begin{equation}\label{GH6}
\liminf_{N \rightarrow \infty} \frac{1}{N^2} \log \left( Z_N' \mathbb{P}_N^{\theta} \left( \mu_N \in \mathcal{G} \right) \right) \geq -  \theta E_V(\mu).
\end{equation}
Note that by Lemma \ref{Sandwich} we have for all $N \in \mathbb{N}$ and $\ell \in \mathbb{W}_N^{\theta}(a_N,b_N)$
\begin{equation}\label{GH7}
\begin{split}
&Z_N' \mathbb{P}_N^{\theta} \left( \ell \right) = \exp\left( O \left(\sum_{ 1 \leq i < j \leq N} \frac{1}{(j-i)}  \right)  \right) \prod_{1 \leq i < j \leq N} \left(\ell_i/N - \ell_j/N \right)^{2\theta} \prod_{i = 1}^N e^{-\theta N V_N(\ell_i/N)} \\
& =  \exp\left( O \left(N \log N  \right)  \right) \prod_{1 \leq i < j \leq N} \left(\ell_i/N - \ell_j/N \right)^{2\theta} \prod_{i = 1}^N e^{-\theta N V_N(\ell_i/N)} \\
& =  \exp\left( O \left(N \log N  \right) - \theta N^2 \iint_{\mathbb{R}^2}{\bf 1}\{x \neq y\} k_{V_N}(x,y) \mu_N(dx) \mu_N(dy) - \theta \sum_{i = 1}^N V_N(\ell_i/N)  \right),
\end{split}
\end{equation}
where the constants in the big $O$ notations depend on $\theta$ alone, and may be different for different lines. We remark that in the first equality we used that $\ell_i - \ell_j \geq (j-i) \theta$ for $N \geq j > i \geq 1$.

If $\Delta = [a, a + \theta]$, i.e. $a = b$, then we have that $\mathcal{M}_{\theta}([a, a+\theta])$ contains a single element -- the uniform measure on $[a, a+ \theta]$. Thus $\mu$ has density $\theta^{-1} \cdot {\bf 1}\{x \in [a,a+\theta]\}$. Let $\ell^N \in \mathbb{W}_N^{\theta}(a_N, b_N)$ be given by $\ell^N_i = a_N + (N-i)\theta$ for $i = 1, \dots, N$, and set $\mu^N = N^{-1} \sum_{i = 1}^N \delta_{\ell^N_i/N}$. Since $\lim_{N \rightarrow \infty} N^{-1} a_N = a$, we conclude that $\mu^N$ weakly converge to $\mu$ (say by the Portmanteau theorem). In addition, we see that we can find a sufficiently large $A > 0$ so that $\mu^N$ satisfy the conditions of Lemma \ref{ConvL} ($\mu^{\infty} = \mu$ and $V_{\infty} = V$ here). From Lemma \ref{ConvL} and (\ref{GH7}) we conclude that
\begin{equation*}
\begin{split}
&\liminf_{N \rightarrow \infty} \frac{1}{N^2} \log \left( Z_N' \mathbb{P}_N^{\theta} \left( \mu_N \in \mathcal{G} \right) \right) \geq \liminf_{N \rightarrow \infty} \frac{1}{N^2} \log \left( Z_N' \mathbb{P}_N^{\theta} \left( \ell^N \right) \right) =  -\theta E_V(\mu),
\end{split}
\end{equation*}
which proves (\ref{GH6}) when $a = b$.\\

In the sequel we assume that $\Delta = [a, b + \theta]$ with $a < b$. In view of Lemma \ref{CompactL}, we see that it suffices to prove (\ref{GH6}) under the additional assumption that $\mathsf{Supp}(\mu)$ is compact and contained in the interior of $\Delta$. In particular, we assume that there exist $c,d \in \mathbb{R}$, with $c \leq d$ and $\epsilon > 0$ such that 
$$a \leq c - \epsilon, \hspace{2mm} d +  \epsilon \leq b, \hspace{2mm}  \mathsf{Supp}(\mu) \subseteq [c,d+\theta].$$

{\bf \raggedleft Claim:} There exists a sequence $\ell^N \in \mathbb{W}_N^{\theta}(-\infty, \infty)$ such that if $\mu^N = N^{-1} \sum_{i = 1}^N \delta_{\ell_i^N/N}$ we have
\begin{enumerate}
\item $\lim_{N \rightarrow \infty} d_1(\mu^N, \mu) = 0$;
\item $\ell^N \in \mathbb{W}_N^{\theta}(a_N, b_N)$ for all large enough $N$;
\item $\mathsf{Supp}(\mu^N) \subseteq [c-\epsilon, d+ \theta + \epsilon]$ for all large enough $N$. 
\end{enumerate}
We prove the claim in the next step. Here we assume its validity and conclude the proof of (\ref{GH6}).\\

We observe that, since $\ell^N \in \mathbb{W}_N^{\theta}(a_N, b_N)$ for all large enough $N$ and $\lim_{N \rightarrow \infty} d_1(\mu^N, \mu) = 0$,
$$\liminf_{N \rightarrow \infty} \frac{1}{N^2} \log \left( Z_N' \mathbb{P}_N^{\theta} \left( \mu_N \in \mathcal{G} \right) \right) \geq \liminf_{N \rightarrow \infty} \frac{1}{N^2} \log \left( Z_N' \mathbb{P}_N^{\theta} \left( \ell^N \right) \right).$$
On the other hand, since $\mathsf{Supp}(\mu^N) \subseteq [c-\epsilon, d+ \theta + \epsilon]$ for all large enough $N$, we conclude that we can find a sufficiently large $A > 0$ so that $\mu^N$ satisfy the conditions of Lemma \ref{ConvL} ($\mu^{\infty} = \mu$ and $V_{\infty} = V$ here). From Lemma \ref{ConvL} and (\ref{GH7}), we conclude that
\begin{equation*}
\begin{split}
&\lim_{N \rightarrow \infty} \frac{1}{N^2} \log \left( Z_N' \mathbb{P}_N^{\theta} \left( \ell^N \right) \right) =  -\theta E_V(\mu).
\end{split}
\end{equation*}
The last two equations prove (\ref{GH6}) when $a < b$.\\

{\bf \raggedleft Step 3.} In this step we construct $\ell^N$ as in the claim in Step 2. Let us denote the density of $\mu$ by $f(x)$, and note that since $\mu \in \mathcal{M}_{\theta}([c,d+\theta])$ we may assume that $0 \leq f(x) \leq \theta^{-1}$ for all $x \in \mathbb{R}$, $f(x) = 0$ for $x \not \in [c,d + \theta]$. We let $y_i$, for $i =1, \dots, N$, denote the quantiles of $\mu$, defined as the smallest real numbers such that 
$$\int_{-\infty}^{y_i} f(x)dx = \frac{i -1/2}{N}.$$
Since $f(x) = 0$ for $x \not \in [c,d + \theta]$, we know that $y_i \in [c, d + \theta]$ for all $i = 1,\dots, N$. 

We now let $\ell^N_i$ denote the largest element in $\mathbb{Z} + (N-i) \theta$, which is less than or equal to $N y_{N-i+1}$. We claim that $\ell^N = (\ell^N_1, \dots, \ell^N_N) \in \mathbb{W}^N_{\theta}(-\infty, \infty)$, or equivalently we want 
$$\lambda_1^N \geq \cdots \geq \lambda_N^N \mbox{, where } \lambda_i^N = \ell_i^N - (N-i)\theta.$$
Suppose, for the sake of contradiction, that $\lambda_i^N - \lambda_{i-1}^N \geq 1$ for some $i \in \{2, \dots, N\}$. Then,
$$\ell_{i-1}^N + 1 = \lambda^N_{i-1} + 1 + (N-i+1)\theta \leq \ell_i^N + \theta \leq Ny_{N-i+1} + \theta = Ny_{N - i +2} + N(y_{N-i+1} - y_{N-i+2}) + \theta.$$
On the other hand, as $f(x) \in [0, \theta^{-1}]$, we have 
$$\frac{1}{N} = \int_{y_{N-i+1}}^{y_{N-i+2}} f(x) dx \leq \theta^{-1}  (y_{N-i+2} - y_{N-i+1}) \implies N (y_{N-i+1} - y_{N-i+2})\leq - \theta.$$
Combining the last two inequalities we get $\ell^N_{i-1} + 1 \leq Ny_{N - i +2}$, which contradicts the maximality of $\ell^N_{i-1}$. As we got our desired contradiction, we conclude that $\ell^N  \in \mathbb{W}^N_{\theta}(-\infty, \infty)$.\\

In the remainder of this step, we prove that $\ell^N$ satisfy the three conditions of the claim. We readily observe that $\mu^N$ weakly converge to $\mu$ (say by the Portmanteau theorem), which establishes the first statement. In addition, by construction we have
$$ Nc - 1 \leq N y_1 -1 \leq \ell_N^N \mbox{ and } \ell_1^N \leq N y_N \leq N(d+ \theta),$$
which readily establishes third statement in the claim. In addition, since $\lim_{N \rightarrow \infty} N^{-1} a_N = a \leq c - \epsilon$ and $\lim_{N \rightarrow \infty} N^{-1} b_N = b \geq d + \epsilon$, we see that the last inequality implies $a_N \leq \ell_N^N$ and $ \ell_1^N \leq b_N + (N-1)\theta$ for all large $N$, which proves the second statement in the claim.
\end{proof}

%
\subsection{Weak LDP upper bound for $\{ T_* \mu_N\}_{N \geq 1}$ }\label{Section3.2}  In this section we prove Lemma \ref{S3WLDP}. Our proof is split into three parts -- these are Sections \ref{Section3.2.1}, \ref{Section3.2.2} and \ref{Section3.2.3}. In Sections \ref{Section3.2.1} and \ref{Section3.2.2} we prove that for each $\mu \in \mathcal{M}(\mathcal{S})$ we have
\begin{equation}\label{FG1}
\limsup_{\delta \rightarrow 0+} \limsup_{N \rightarrow \infty} \frac{1}{N^2} \log \left( Z_N' \mathbb{P}_N^{\theta} \left( T_* \mu_N \in B(\mu,\delta) \right) \right) \leq - \theta \cdot E_{\mathcal{V}}(\mu),
\end{equation}
under Assumption 2(a), and Assumption 2(b), respectively. In Section \ref{Section3.2.3} we prove (\ref{WLDPUB}).

%
\subsubsection{Proof under Assumption 2(a)}\label{Section3.2.1} In this section we prove (\ref{FG1}) when Assumption 2(a) holds. The proof we present here is adapted from \cite[Proposition 2.3]{Hardy} and for clarity is split into two steps.\\

{\bf \raggedleft Step 1.} In this step, we introduce some relevant notation and establish a few technical estimates, which will be used in the next step. 

 If $T$ is as in Section \ref{Section2.2}, one directly verifies that 
\begin{equation}\label{Q1}
\|T(x) - T(y)\|_2 = \frac{|x-y|}{\sqrt{1 + x^2} \cdot \sqrt{1 + y^2}} \mbox{ for } x,y \in \mathbb{R}.
\end{equation}
From (\ref{VNgrowth}) with $N = 1$, we know that there exists $\theta' > 1/2$ such that 
$$\liminf_{|x| \rightarrow \infty} \theta V(x) - \theta' \log (1 + x^2) > - \infty.$$
The latter and the continuity of $V$ implies that there is a constant $A_1 \geq 0$ such that 
$$ - \theta' \log (1 + x^2) + A_1 \geq - \theta V(x) \mbox{ for all $x \in \mathbb{R}$}.$$
The latter inequality implies that for some $A_2 > 0$, depending on $\theta$ and $V$, and $N \geq 2$ 
\begin{equation}\label{Q2}
\sum_{ \ell \in \mathbb{W}_N^{\theta}(a_N, b_N)} e^{-\theta V(\ell_i/N)} \leq \prod_{i =1}^N \left(\sum_{x \in \mathbb{Z}} \exp \left( -\theta V\left(\frac{x + (N-i)\theta}{N}\right) \right) \right) \leq \exp \left(A_2 N \log N\right).
\end{equation}

If $\mathcal{V}$ is as in (\ref{S2VtoV}) and $F_{\mathcal{V}}$ as in (\ref{S2Ker}), we know that $F_{\mathcal{V}}$ is lower bounded and lower semi-continuous on $\mathcal{S} \times \mathcal{S}$. This ensures the existence of continuous functions $\{F_{\mathcal{V}}^M \}_{M \geq 1}$ such that $F_{\mathcal{V}}^M$ increase pointwise to $F_{\mathcal{V}}$. By replacing $F_{\mathcal{V}}^M$ with $\min( M, F_{\mathcal{V}}^M)$ we may also assume that $F_{\mathcal{V}}^M \leq M$. 

With the above notation we can proceed with the main argument in the next step.\\

{\bf \raggedleft Step 2.} Combining (\ref{GH7}) with (\ref{Q1}), and setting $\vec{z}_i = T(\ell_i/N)$ for $i =1, \dots, N$, we conclude that
\begin{equation*}
\begin{split}
Z_N' \mathbb{P}_N^\theta(T_{*}\mu_N\in B(\mu,\delta)) \leq & \hspace{1mm} e^{C_{\theta} N \log N + \theta \epsilon_N N^2} \sum_{\substack{\ell \in \mathbb{W}_N^{\theta}(a_N, b_N) : \\ T_* \mu_N \in B(\mu,\delta) }} \prod_{1 \leq i < j \leq N} \left\| \vec{z}_i- \vec{z}_j \right\|_2^{2\theta} \\
&\times \prod_{i = 1}^N e^{-\theta (N-1) (V(\ell_i/N) - \log (1 + (\ell_i/N)^2)) - \theta V(\ell_i/N)} ,
\end{split}
\end{equation*}
where $C_{\theta}$ is a positive constant that depends on $\theta$ alone, and $\epsilon_N$ are as in Assumption 2(a). If $\{F_{\mathcal{V}}^M \}_{M \geq 1}$ are as in Step 1, then we see that the last inequality implies for $N \geq 2$ and $M \in \mathbb{N}$ 
\begin{equation}\label{S3E2}
\begin{split}
&Z_N' \mathbb{P}_N^\theta(T_{*}\mu^N\in B(\mu,\delta))  \\
& \leq e^{C_{\theta} N \log N + \theta \epsilon_N N^2} \sum_{\substack{\ell \in \mathbb{W}_N^{\theta}(a_N, b_N) : \\ T_* \mu_N \in B(\mu,\delta) }} \exp \left( - \theta \sum_{1 \leq i \neq j \leq N} F_{\mathcal{V}}(\vec{z}_i, \vec{z}_j)  \right) \prod_{i = 1}^N e^{-\theta V(\ell_i/N)}  \\
& \leq e^{C_{\theta} N \log N + \theta \epsilon_N N^2} \hspace{-6mm} \sum_{\substack{\ell \in \mathbb{W}_N^{\theta}(a_N, b_N) : \\ T_* \mu_N \in B(\mu,\delta) }} \hspace{-6mm} \exp \left( - \theta N^2 \iint_{\vec{x} \neq \vec{y}}  F^M_{\mathcal{V}}(\vec{x}, \vec{y}) T_* \mu_N(d\vec{x}) T_* \mu_N(d\vec{y})  \right) \prod_{i = 1}^N  e^{-\theta V(\ell_i/N)},
\end{split}
\end{equation}

Moreover, we have $\mathbb{P}_N^\theta$-almost surely 
$$T_* \mu_N \otimes T_* \mu_N ( \{ (\vec{x},\vec{y}) \in \mathcal{S} \times \mathcal{S}: \hspace{1mm} \vec{x} = \vec{y} \}) = \frac{1}{N},$$
which implies that on the event $\{ T_{*}\mu_N\in B(\mu,\delta)\}$ we have 
\begin{equation}\label{Q3}
\begin{split}
\iint_{\vec{x} \neq \vec{y}}  F^M_{\mathcal{V}}(\vec{x}, \vec{y}) T_* \mu_N(d\vec{x}) T_* \mu_N(d\vec{y}) &  \geq  \iint_{\mathcal{S}^2}  F^M_{\mathcal{V}}(\vec{x}, \vec{y}) T_* \mu_N(d\vec{x}) T_* \mu_N(d\vec{y}) - \frac{1}{N} \max_{\mathcal{S} \times \mathcal{S}}   F^M_{\mathcal{V}}  \\
& \geq \inf_{\nu \in B(\mu,  \delta)} \iint_{\mathcal{S}^2}  F^M_{\mathcal{V}}(\vec{x}, \vec{y}) \nu(d\vec{x}) \nu(d\vec{y}) - \frac{M}{N},
\end{split}
\end{equation}
where in the last inequality we used that $F^M_{\mathcal{V}} \leq M$. 

Combining the last inequality with (\ref{Q2}) and (\ref{S3E2}) we conclude that for some $B_M$, depending on $\theta$, $V$ and $M$, and $N \geq 2$
\begin{equation*}
\begin{split}
&N^{-2} \log \left( Z_N' \mathbb{P}_N^\theta(T_{*}\mu^N\in B(\mu,\delta)) \right) \\
&\leq  - \theta \inf_{\nu \in B(\mu,  \delta)} \iint_{\mathcal{S}^2}  F^M_{\mathcal{V}}(\vec{x}, \vec{y}) \nu(d\vec{x}) \nu(d\vec{y}) +   B_M \cdot  N^{-1} \log N + \theta \epsilon_N,
\end{split}
\end{equation*}
which implies that 
\begin{equation*}
\begin{split}
&\limsup_{N \rightarrow \infty} \frac{1}{N^2}  \log \left( Z_N' \mathbb{P}_N^\theta(T_{*}\mu^N\in B(\mu,\delta)) \right) \leq  - \theta \inf_{\nu \in B(\mu,  \delta)} \iint_{\mathcal{S}^2}  F^M_{\mathcal{V}}(\vec{x}, \vec{y}) \nu(d\vec{x}) \nu(d\vec{y}).
\end{split}
\end{equation*}
The last inequality and the continuity of $F^M_{\mathcal{V}}$ on $\mathcal{S} \times \mathcal{S}$ implies
\begin{equation*}
\begin{split}
&\limsup_{\delta \rightarrow 0+}\limsup_{N \rightarrow \infty} \frac{1}{N^2}  \log \left( Z_N' \mathbb{P}_N^\theta(T_{*}\mu^N\in B(\mu,\delta)) \right) \leq  - \theta  \iint_{\mathcal{S}^2}  F^M_{\mathcal{V}}(\vec{x}, \vec{y}) \mu(d\vec{x}) \mu(d\vec{y}).
\end{split}
\end{equation*}
Letting $M \rightarrow \infty$ in the last inequality and using that, by the monotone convergence theorem, the right side converges to $-\theta E_{\mathcal{V}}(\mu)$ we conclude (\ref{FG1}).

%
\subsubsection{Proof under Assumption 2(b)}\label{Section3.2.2} In this section we prove (\ref{FG1}) when Assumption 2(b) holds. In particular, we have that $V_N$ converge uniformly over compact subsets to $V$, and satisfy the growth condition (\ref{FeGrowth}). Notice that by the continuity of $V_N$ and the uniform convergence to $V$ over compacts, we may shift $V_N$ and $V$ by the same positive constant (which of course does not affect $\mathbb{P}_N^{\theta}$), so that
\begin{equation}\label{R1}
V_N(x) \geq (1 + \xi) \log (1 + x^2) \mbox{ for all $x \in \mathbb{R}$ and $N \geq 1$.} 
\end{equation}
The proof we present here is adapted from \cite{fe,jo} and for clarity is split into two steps.\\

{\bf \raggedleft Step 1.} In this step we introduce some relevant notation and establish a few technical estimates, which will be used in the next step. 

Using that $|x-y|^2 \leq (1 + x^2) (1 + y^2)$ and (\ref{R1}) we have that 
\begin{equation}\label{R2}
- k_{V_N}(x,y) =  \log |x - y| - \frac{1}{2} V_N(x) - \frac{1}{2} V_N(y) \leq - \frac{\xi}{2} \log (1 + x^2) - \frac{\xi}{2} \log (1 + y^2),
\end{equation}
where we recall that $k_V$ was introduced in (\ref{S1EF}). In addition, we have that there exists a constant $A_1 > 0$, depending on $\theta$ alone, such that for all $N \geq 2$ and $\alpha \geq 1$
\begin{equation}\label{R3}
\begin{split}
&\sum_{\ell \in \mathbb{W}_N^{\theta}(a_N, b_N)} \prod_{i = 1}^N \exp \left( - \alpha \log (1 +  (\ell_i/N)^2) \right) \\
&\leq \prod_{i = 1}^N \left( \sum_{x \in \mathbb{N}} \exp \left( - \alpha \log \left(1 + \frac{[x + (N-i)\theta ]^2}{N^2}\right) \right)\right)  \leq \exp (A_1 N \log N).
\end{split}
\end{equation}

Let $\mathcal{V}_N$ be as in (\ref{S2VtoV}) and $F_{\mathcal{V}_N}$ as in (\ref{S2Ker}) with $V$ replaced with $V_N$. For $M \in \mathbb{N}$ we define 
\begin{equation}\label{R4}
F^M_{\mathcal{V}_N}(\vec{x}, \vec{y}) = \min \left(\log \| \vec{x} - \vec{y} \|_2^{-1}, M/2 \right) + \frac{1}{2} \min \left( \mathcal{V}_N(\vec{x}), M/2\right) + \frac{1}{2} \min\left( \mathcal{V}_N(\vec{y}), M/2 \right),
\end{equation}
and note that from (\ref{R1}) and (\ref{S2VtoV}) there is an $M$ dependent neighborhood $U_M$ around the point $(\mathsf{np}, \mathsf{np}) \in \mathcal{S} \times \mathcal{S}$ such that $F^M_{\mathcal{V}_N}(\vec{x}, \vec{y}) =  M$ for $\vec{x}, \vec{y} \in U_M$. In particular, we conclude that $ F^M_{\mathcal{V}_N}$ are continuous on $\mathcal{S} \times \mathcal{S}$. In addition, from the uniform convergence of $V_N$ to $V$ over compact sets, we conclude that the sequence
\begin{equation}\label{R5}
a_N^M:= \sup_{ \vec{x}, \vec{y} \in \mathcal{S}} \left| F^M_{\mathcal{V}_N}(\vec{x}, \vec{y}) - F^M_{\mathcal{V}}(\vec{x}, \vec{y}) \right| \mbox{ satisfies } \lim_{N \rightarrow \infty} a_N^M = 0.
\end{equation}

With the above notation we can proceed with the main argument in the next step.\\

{\bf \raggedleft Step 2.} Let us fix $\rho \in (0,1)$. Combining (\ref{GH7}), (\ref{Q1}), (\ref{R1}) and (\ref{R2}), and setting $\vec{z}_i = T(\ell_i/N)$ for $i =1, \dots, N$, we conclude that for $M,N \geq 1$ we have
\begin{equation*}
\begin{split}
&Z_N' \mathbb{P}_N^\theta(T_{*}\mu_N\in B(\mu,\delta)) \leq  \hspace{1mm} e^{C_{\theta} N \log N} \sum_{\substack{\ell \in \mathbb{W}_N^{\theta}(a_N, b_N) : \\ T_* \mu_N \in B(\mu,\delta) }} \exp \left( - \theta (1- \rho) \sum_{1 \leq i \neq j \leq N} F_{\mathcal{V}_N}(\vec{z}_i, \vec{z}_j)  \right)  \\
&\times   \exp \left( - \theta \rho \sum_{1 \leq i \neq j \leq N} k_{V_N}(\ell_i/N, \ell_j/N) - \theta \sum_{i = 1}^N V_N(\ell_i/N)\right)  \leq e^{C_{\theta} N \log N} \sum_{\substack{\ell \in \mathbb{W}_N^{\theta}(a_N, b_N) : \\ T_* \mu_N \in B(\mu,\delta) }} \\
& \exp \left( - \theta  (1- \rho) N^2 \iint_{\vec{x} \neq \vec{y}}  F^M_{\mathcal{V}_N}(\vec{x}, \vec{y}) T_* \mu_N(d\vec{x}) T_* \mu_N(d\vec{y})  -  \theta\rho \xi (N-1)  \sum_{i = 1}^N \log (1 + (\ell_i/N)^2)  \right),
\end{split}
\end{equation*}
where $ F^M_{\mathcal{V}_N}(\vec{x}, \vec{y})$ are as in Step 2, and $C_{\theta}$ is a positive constant that depends on $\theta$ alone.

Arguing as in (\ref{Q3}) we have
\begin{equation*}
\begin{split}
\iint_{\vec{x} \neq \vec{y}}  F^M_{\mathcal{V}_N}(\vec{x}, \vec{y}) T_* \mu_N(d\vec{x}) T_* \mu_N(d\vec{y})  & \geq \inf_{\nu \in B(\mu,  \delta)} \iint_{\mathcal{S}^2}  F^M_{\mathcal{V}_N}(\vec{x}, \vec{y}) \nu(d\vec{x}) \nu(d\vec{y}) - \frac{M}{N} \\
& \geq  \inf_{\nu \in B(\mu,  \delta)} \iint_{\mathcal{S}^2}  F^M_{\mathcal{V}}(\vec{x}, \vec{y}) \nu(d\vec{x}) \nu(d\vec{y}) - a^M_N - \frac{M}{N} ,
\end{split}
\end{equation*}
where we recall that $F^M_{\mathcal{V}_N}$ were defined in Step 1, and satisfy $F^M_{\mathcal{V}_N} \leq M$, while $a_N^M$ are as in (\ref{R5}).

Combining the last two inequalities with (\ref{R3}) we conclude that for $N$ large enough so that $\theta\rho \xi (N-1)  \geq 1$, and $N \geq 2$, and $M \geq 1$ we have
\begin{equation*}
\begin{split}
Z_N' \mathbb{P}_N^\theta(T_{*}\mu_N\in B(\mu,\delta)) \leq &  \exp \left( - \theta  (1- \rho) N^2\inf_{\nu \in B(\mu,  \delta)} \iint_{\mathcal{S}^2}  F^M_{\mathcal{V}}(\vec{x}, \vec{y}) \nu(d\vec{x}) \nu(d\vec{y}) \right) \\
& \times \exp \left(  C_{\theta} N \log N + \theta (1-\rho) N^2 a^M_N + MN  + A_1 N \log N \right).
\end{split}
\end{equation*}
Using that $\lim_{N \rightarrow \infty} a_N^M = 0$, see (\ref{R5}), we conclude that
\begin{equation*}
\begin{split}
\limsup_{N \rightarrow \infty} \frac{1}{N^2}  \log \left( Z_N' \mathbb{P}_N^\theta(T_{*}\mu^N\in B(\mu,\delta)) \right) \leq - \theta  (1- \rho) \inf_{\nu \in B(\mu,  \delta)} \iint_{\mathcal{S}^2}  F^M_{\mathcal{V}}(\vec{x}, \vec{y}) \nu(d\vec{x}) \nu(d\vec{y}).
\end{split}
\end{equation*}
The last inequality and the continuity of $F^M_{\mathcal{V}}$ on $\mathcal{S} \times \mathcal{S}$ implies
\begin{equation*}
\begin{split}
&\limsup_{\delta \rightarrow 0+}\limsup_{N \rightarrow \infty} \frac{1}{N^2}  \log \left( Z_N' \mathbb{P}_N^\theta(T_{*}\mu^N\in B(\mu,\delta)) \right) \leq  - \theta (1- \rho) \iint_{\mathcal{S}^2}  F^M_{\mathcal{V}}(\vec{x}, \vec{y}) \mu(d\vec{x}) \mu(d\vec{y}).
\end{split}
\end{equation*}
We may now let $M \rightarrow \infty$ above, and note that the right side converges to $ - \theta (1-\rho) E_{\mathcal{V}}(\mu)$ by the monotone convergence theorem, and subsequently take $\rho \rightarrow 0+$ to get (\ref{FG1}).

%
\subsubsection{Proof of Lemma \ref{S3WLDP}}\label{Section3.2.3}  In this section we conclude the proof of Lemma \ref{S3WLDP}. For clarity we split the proof into two steps.\\

{\bf \raggedleft Step 1.} If $\mu \in T_*(\mathcal{M}_{\theta}(\Delta))$, then we have that (\ref{WLDPUB}) follows from (\ref{FG1}), which was established in Sections \ref{Section3.2.1} and \ref{Section3.2.2} above. We may thus assume that $\mu \in \mathcal{M}(\mathcal{S})$ and $\mu \not \in T_*(\mathcal{M}_{\theta}(\Delta))$.

If $\mu(\{\mathsf{np}\}) > 0$, then we have that $E_{\mathcal{V}}(\mu) = \infty$, because of the term $\log \| \vec{x} - \vec{y}\|_2^{-1}$ in $F_{\mathcal{V}}$, see (\ref{S2Ker}). In particular, we see that in this case (\ref{WLDPUB}) again follows from (\ref{FG1}), and we may assume that $\mu(\{\mathsf{np}\}) = 0$.

From Lemma \ref{S2TDiff} we know that $T_*$ is a homeomorphism between $\mathcal{M}(\mathbb{R})$ and $\{ \nu \in \mathcal{M}(\mathcal{S}): \nu (\{\mathsf{np}\}) = 0 \}$, and so there is a unique measure $\rho \in \mathcal{M}(\mathbb{R})$ such that $T_*\rho = \mu$. Since $\mu \not \in T_*(\mathcal{M}_{\theta}(\Delta))$ we know that $\rho \not \in \mathcal{M}_{\theta}(\Delta)$.

We claim that there exist $\epsilon_0 > 0$ and $N_0 \in \mathbb{N}$, such that for $N \geq N_0$ we have
\begin{equation}\label{PR1}
\mathbb{P}_N^{\theta} \left(  d_1(\rho, \mu_N) < \epsilon_0 \right)  = 0,
\end{equation}
where $d_n$ is the L{\'e}vy metric on $\mathcal{M}(\mathbb{R}^n)$ as in (\ref{LevyMetric}). We will prove (\ref{PR1}) in Step 2 below. For now, we assume its validity and conclude the proof of (\ref{WLDPUB}).\\

Since $T_*$ is a homeomorphism between $\mathcal{M}(\mathbb{R})$ and $\{ \nu \in \mathcal{M}(\mathcal{S}): \nu (\{\mathsf{np}\}) = 0 \}$, we conclude that there exists $\delta_0 > 0$ such that 
$$B(\mu, \delta_0) \cap \{ \nu \in \mathcal{M}(\mathcal{S}): \nu (\{\mathsf{np}\}) = 0 \} \subseteq T_* \{ \rho' \in \mathcal{M}(\mathbb{R}): d_1(\rho, \rho') < \epsilon_0\}.$$
The latter implies that for $\delta \in (0, \delta_0]$ and $N \geq N_0$
\begin{equation*}
\begin{split}
\mathbb{P}_N^{\theta} \hspace{-0.5mm} \left( T_* \mu_N \in B(\mu,\delta) \right) = \mathbb{P}_N^{\theta}\hspace{-0.5mm}  \left( T_* \mu_N \in B(\mu,\delta)\hspace{-0.5mm}  \cap \hspace{-0.5mm}  \{ \nu \in \mathcal{M}(\mathcal{S}) \hspace{-0.5mm} : \hspace{-0.5mm}  \nu (\{\mathsf{np}\}) = 0 \} \hspace{-0.5mm}   \right) \leq  \mathbb{P}_N^{\theta} \hspace{-0.5mm}  \left( d_1(\rho, \mu_N) < \epsilon_0   \right) = 0, 
\end{split}
\end{equation*}
which implies (\ref{WLDPUB}).\\

{\bf \raggedleft Step 2.} In this step we prove (\ref{PR1}). Observe that $\mathcal{M}_{\theta}(\Delta) = \mathcal{M}(\Delta) \cap \mathcal{M}_{\theta}(\mathbb{R})$ and both $ \mathcal{M}(\Delta)$ and $\mathcal{M}_{\theta}(\mathbb{R})$ are closed subsets of $\mathcal{M}(\mathbb{R})$. Since $\rho \not \in \mathcal{M}_{\theta}(\Delta) $ we conclude that there exists $\epsilon \in (0,1)$, such that at least one of the following holds:
\begin{enumerate}
\item $d_1(\rho, \rho') > 2\epsilon \mbox{ for all } \rho' \in \mathcal{M}_{\theta}(\mathbb{R})$,
\item  $d_1(\rho, \rho') > 2\epsilon \mbox{ for all } \rho' \in \mathcal{M}(\Delta) $.
\end{enumerate}

Suppose first that $d_1(\rho, \rho') > 2\epsilon \mbox{ for all } \rho' \in \mathcal{M}_{\theta}(\mathbb{R})$. We take $N_0 = \lceil \theta \epsilon^{-1} \rceil$ and proceed to prove (\ref{PR1}) with this choice of $N_0$ and $\epsilon_0 = \epsilon$. From (\ref{S1DEM}) we know that $\mu_N = \frac{1}{N} \sum_{i = 1}^N \delta_{\ell_i/N},$
and we let 
$$\tilde{\mu}_N(x) = \sum_{i = 1}^N \theta^{-1} \cdot {\bf 1} \{ x \in [\ell_i/N, \ell_i/N + \theta/N)\}.$$
Notice that $\tilde{\mu}_N$ is a probability density function on $\mathbb{R}$, and using the same letter to denote the corresponding measure we have $\tilde{\mu}_N \in \mathcal{M}_{\theta}(\mathbb{R})$. Here we implicitly used that $\ell_i - \ell_j \geq \theta (j-i)$ for $1 \leq i < j \leq N$. Furthermore, we have from (\ref{LevyMetric}) that 
$$d_1(\mu_N, \tilde{\mu}_N) \leq \theta N^{-1},$$
which implies that for $N \geq N_0$ we have 
$$\mathbb{P}_N^{\theta} \left(  d_1(\rho, \mu_N) < \epsilon \right) \leq  \mathbb{P}_N^{\theta} \left(  d_1(\rho, \tilde{\mu}_N) < \epsilon + \theta N^{-1} \right) \leq \mathbb{P}_N^{\theta} \left(  d_1(\rho, \tilde{\mu}_N) < 2\epsilon \right) = 0,$$
where the last equality used that $\tilde{\mu}_N \in \mathcal{M}_{\theta}(\mathbb{R})$ so that $ d_1(\rho, \tilde{\mu}_N) > 2\epsilon$. This establishes (\ref{PR1}) when $d_1(\rho, \rho') > 2\epsilon \mbox{ for all } \rho' \in \mathcal{M}_{\theta}(\mathbb{R})$.\\

Finally, we suppose that $d_1(\rho, \rho') > 2\epsilon \mbox{ for all } \rho' \in \mathcal{M}(\Delta) $. In particular, $\Delta \neq \mathbb{R}$ and so we have that either $\Delta = [a, \infty)$, $\Delta =(-\infty, b + \theta]$ or $\Delta = [a,b + \theta]$ for some finite $a, b$ with $a \leq b$. As the three cases are handled quite similarly, we will only consider the case when $\Delta = [a,b + \theta]$.

From Assumption 1, we know that $\lim_{N \rightarrow \infty} N^{-1} a_N = a$ and $\lim_{N \rightarrow \infty} N^{-1} b_N = b$. The latter implies that there is $N_0 \in \mathbb{N}$ such that for $N \geq N_0$ and $A_1 = \lfloor \epsilon N/4 \rfloor$, $B_1 = \lfloor (1-\epsilon/4) N \rfloor$ we have
\begin{equation}\label{PR2}
\begin{split}
1 \leq A_1 \leq B_1 \leq N, \hspace{1mm} \frac{b_N + (N-A_1)\theta}{N}  \leq b + \theta, \hspace{1mm} \frac{a_N + (N-B_1) \theta}{N} \geq a, \hspace{1mm} \frac{A_1 + N - B_1}{N} < \epsilon.
\end{split}
\end{equation}
Below we proceed to prove (\ref{PR1}) for this choice of $N_0$ and $\epsilon_0 = \epsilon$.

Throughout we assume that $N \geq N_0$. For $\ell \in \mathbb{W}^{\theta}_{N}(a_N, b_N)$ we define 
$$A = \begin{cases} \min \{i \in \{1, \dots, N\} : \ell_i  \leq Nb + N \theta \} &\mbox{ if } \ell_N \leq Nb + N \theta, \\ 0 &\mbox{ if } \ell_N > Nb + N \theta, \end{cases} \mbox{ and }$$
$$B = \begin{cases} \max \{i \in \{1, \dots, N\} : \ell_i  \geq Na  \} &\mbox{ if } \ell_1 \geq Na \\ 0 &\mbox{ if } \ell_1 < Na  \end{cases}.$$
Observe that from (\ref{PR2}) we have $1 \leq A \leq A_1 \leq B_1 \leq B \leq N$ for all $\ell \in \mathbb{W}^{\theta}_{N}(a_N, b_N)$. 

If $\mu_N = \frac{1}{N} \sum_{i = 1}^N \delta_{\ell_i/N}$ we define 
$$\tilde{\mu}_N = \frac{(A-1) + (N-B)}{N}\delta_{\ell_A/N} + \frac{1}{N} \sum_{i = A}^B \delta_{\ell_i/N}.$$
Note that $\tilde{\mu}_N \in \mathcal{M}(\Delta)$ and also 
\begin{equation*}
\begin{split}
d_1(\mu_N, \tilde{\mu}_N) \leq \sup_{F \subseteq \mathbb{R}} \left|\mu_N(F) - \tilde{\mu}_N(F) \right|  = \frac{(A-1) + (N-B)}{N} \leq \frac{A_1 + N - B_1}{N} < \epsilon,
\end{split}
\end{equation*}
where the supremum is over closed subsets $F$ of $\mathbb{R}$, and in the last inequality we used (\ref{PR2}). The last inequality implies that for $N \geq N_0$ we have 
$$\mathbb{P}_N^{\theta} \left(  d_1(\rho, \mu_N) < \epsilon \right) \leq \mathbb{P}_N^{\theta} \left(  d_1(\rho, \tilde{\mu}_N) < 2\epsilon \right) = 0,$$
where the last equality used that $\tilde{\mu}_N \in \mathcal{M}(\Delta)$ so that $ d_1(\rho, \tilde{\mu}_N) > 2\epsilon$. This establishes (\ref{PR1}).\\

%
\subsection{Proof of technical lemmas}\label{Section3.3} In this section we present the proofs of Lemmas \ref{ConvL} and \ref{CompactL}.

\begin{proof}[Proof of Lemma \ref{ConvL}] We first observe that 
\begin{equation*}
\begin{split}
\iint_{\mathbb{R}^2} {\bf 1}\{ x \neq y \} k_{V_N}(x,y) \mu^N(dx) \mu^N(dy) = &\iint_{[-A,A]^2}{\bf 1}\{ x \neq y \} \log |x -y|^{-1} \mu^N(dx) \mu^N(dy) \\
&+ \frac{N-1}{N} \int_{[-A,A]} V_N(x) \mu^N(dx).
\end{split}
\end{equation*}
Since $V_N$ converge to $V$ uniformly on $[-A,A]$ and $\mu^N$ converge weakly to $\mu^{\infty}$, we see that to prove (\ref{JK1}) it suffices to show that 
\begin{equation}\label{JK2}
\lim_{N \rightarrow \infty} \iint_{[-A,A]^2} {\bf 1}\{ x \neq y \} \log |x -y| \mu^N(dx) \mu^N(dy) = \iint_{[-A,A]^2} \log |x -y| \mu^\infty(dx) \mu^\infty(dy).
\end{equation}
For $M \in \mathbb{N}$ we let $f_M(x,y) = \max(\log |x -y|, - M)$ and then note that 
$$ \iint_{[-A,A]^2} {\bf 1}\{ x \neq y \} \log |x -y| \mu^N(dx) \mu^N(dy) \leq \iint_{[-A,A]^2} f_M(x,y) \mu^N(dx) \mu^N(dy) + \frac{M}{N},$$
which implies from the weak convergence of $\mu^N$ to $\mu^{\infty}$ and the continuity of $f_M$ that
$$\limsup_{N \rightarrow \infty} \iint_{[-A,A]^2} {\bf 1}\{ x \neq y \} \log |x -y| \mu^N(dx) \mu^N(dy)  \leq \iint_{[-A,A]^2} f_M(x,y) \mu^\infty(dx) \mu^\infty(dy).$$
Letting $M \rightarrow \infty$ in the last equation and using the dominated convergence theorem, with dominating function $\theta^{-2} {\bf 1}\{ (x,y) \in [-A,A]^2\} \cdot | \log |x-y||$, we obtain
\begin{equation}\label{JK3}
\limsup_{N \rightarrow \infty} \iint_{[-A,A]^2} {\bf 1}\{ x \neq y \} \log |x -y| \mu^N(dx) \mu^N(dy) \leq \iint_{[-A,A]^2} \log |x -y| \mu^\infty(dx) \mu^\infty(dy).
\end{equation}
We mention that in deriving (\ref{JK3}) we used that $\mu^{\infty} \in \mathcal{M}_{\theta}([-A,A])$.\\

In view of (\ref{JK3}), we see that to show (\ref{JK2}) it suffices to prove 
\begin{equation}\label{JK4}
\liminf_{N \rightarrow \infty} \iint_{[-A,A]^2} {\bf 1}\{ x \neq y \} \log |x -y| \mu^N(dx) \mu^N(dy) \geq \iint_{[-A,A]^2} \log |x -y| \mu^\infty(dx) \mu^\infty(dy).
\end{equation}
Let $\epsilon > 0$ be given, and for $M \in \mathbb{N}$ let $g_M(x,y)$ be a smooth function on $[-A, A]^2$, such that 
$$1 \geq g_M(x,y) \geq 0, \hspace{2mm} g_M(x,y) = 1 \mbox{ for } |x-y| \geq M^{-1}, \mbox{ and } g_M(x,y) = 0 \mbox{ for }|x-y| \leq (2M)^{-1}.$$
We note that for each $M, N \geq 1$
\begin{equation}\label{JK5}
\begin{split}
&\iint_{[-A,A]^2} \hspace{-3mm} {\bf 1}\{ x \neq y \} \log |x -y| \mu^N(dx) \mu^N(dy) \geq \iint_{[-A,A]^2} g_M(x,y) \log |x -y| \mu^N(dx) \mu^N(dy) \\
&+ N^{-2} \sum_{i = 1}^N \sum_{j =1, j \neq i}^N {\bf 1}\{ |\ell^N_i/N - \ell^N_j/N| \leq M^{-1}\} \log |\ell^N_i/N - \ell^N_j/N|  
\end{split}
\end{equation}
If we set $K_{M,N} = \lfloor N \theta^{-1} M^{-1} \rfloor$, we note that $N/M \geq K_{M,N} \theta$, $(K_{M,N} + 1)\theta > N/M$. The latter inequalities, combined with the statement $|\ell^N_i - \ell^N_j| \geq |i-j| \theta$ for $1 \leq i\neq j \leq N$, imply
\begin{equation}\label{JK6}
\begin{split}
&\sum_{i = 1}^N \sum_{j =1, j \neq i}^N {\bf 1}\{ |\ell^N_i/N - \ell^N_j/N| \leq M^{-1}\} \log |\ell^N_i/N - \ell^N_j/N|  \geq 2N \cdot \sum_{i = 1}^{K_{M,N}} \log \frac{i \theta}{N} \\
&= 2N K_{M,N}  \log \theta + 2N \log \left( K_{M,N}! \right) - 2N K_{M,N} \log N.
\end{split}
\end{equation}
From  \cite[Equation (1)]{Rob} we have
\begin{equation}\label{EqnRob}
n! = \sqrt{2\pi} n^{n+1/2} e^{-n} \cdot e^{r_n}, \mbox{ where } \frac{1}{12 n +1 } < r_n < \frac{1}{12n} \mbox{  for all $n \in \mathbb{N}$},
\end{equation}
and so if $N$ is sufficiently large, depending on $M,\theta$, we have
\begin{equation}\label{JK7}
\begin{split}
& 2N \log \left( K_{M,N}! \right) \geq 2N K_{M,N} \log K_{M,N} - 2N K_{M,N} +O(N \log N) \\
&= 2 N K_{M,N} \log N + \frac{2N^2}{\theta M} \log \left( \frac{1}{\theta M} \right) - 2N K_{M,N} + O(N \log N) , 
\end{split}
\end{equation}
where the constants in the big $O$ notations depend on $M, \theta$ and are possibly different. In deriving the equality in (\ref{JK7}) we used that $K_{M,N} = N \theta^{-1} M^{-1} + O(1)$.

Combining (\ref{JK5}), (\ref{JK6}) and (\ref{JK7}), we conclude that for each $M \in \mathbb{N}$
\begin{equation*}
\begin{split}
&\liminf_{N \rightarrow \infty} \iint_{[-A,A]^2} \hspace{-3mm} {\bf 1}\{ x \neq y \} \log |x -y| \mu^N(dx) \mu^N(dy) \\
&   \geq \liminf_{N \rightarrow \infty} \iint_{[-A,A]^2} g_M(x,y) \log |x -y| \mu^N(dx) \mu^N(dy) + \frac{2}{\theta M} \log \left( \frac{1}{\theta M} \right) + \frac{2 [\log \theta - 1]}{ \theta M} \\
& = \iint_{[-A,A]^2} g_M(x,y) \log |x -y| \mu^\infty(dx) \mu^\infty(dy)+ \frac{2}{\theta M} \log \left( \frac{1}{\theta M} \right) + \frac{2 [\log \theta - 1]}{ \theta M} ,
\end{split}
\end{equation*}
where in the last equality we used the weak convergence of $\mu^N$ to $\mu^{\infty}$ and the continuity of $g_M(x,y) \cdot \log |x-y|$. Taking $M \rightarrow \infty$ in the last line we get (\ref{JK4}) once we utilize the dominated convergence theorem with dominating function $\theta^{-2} {\bf 1}\{ (x,y) \in [-A,A]^2\} \cdot | \log |x-y||$.
\end{proof}

\begin{proof}[Proof of Lemma \ref{CompactL}] For clarity, we split the proof into two steps.  In the first step, we introduce some useful notation for our argument, and construct the measures $\mu_n$. In the second step, we show that the $\mu_n$ constructed in Step 1 satisfy the conditions of the lemma.\\

{\bf \raggedleft Step 1.} Observe that our assumption that $b > a$ implies that there are $c,d \in \mathbb{R}$, $c \leq d$ and $\epsilon_1 \in (0,1)$ such that $a \leq c - \epsilon_1$, $b \geq d + \epsilon_1$. We let $f(x)$ be the density of $\mu$, and since $\mu \in \mathcal{M}_{\theta}(\Delta)$ we may assume that $\theta^{-1} \geq f(x) \geq 0$ for all $x \in \mathbb{R}$.

For $\epsilon > 0$, we let $A_{\epsilon} = \{x \in [c - \epsilon_1, d + \theta + \epsilon_1]: f(x) > \theta^{-1} - \epsilon\}$ and $A^c_{\epsilon} = \{x \in [c - \epsilon_1, d + \theta + \epsilon_1 ]: f(x) \leq \theta^{-1} - \epsilon\}$. If $\lambda$ denotes the Lebesgue measure on $\mathbb{R}$, we see that for all $\epsilon \in (0, \theta^{-1})$ we have 
$$(\theta^{-1} - \epsilon) \lambda(A_{\epsilon}) \leq \mu(A_{\epsilon}) \leq 1,$$
which implies that there exists $\epsilon_2 \in (0, \theta^{-1})$ sufficiently small so that $\lambda ( A^c_{\epsilon_2}) \geq \epsilon_1$. We put $\epsilon = \min(\epsilon_1, \epsilon_2)$ and note that $\lambda ( A^c_{\epsilon}) \geq \epsilon$. 

Let $\{ a_n \}_{ n\in \mathbb{N}}$, $\{ a_n \}_{ n\in \mathbb{N}}$ be such that:
\begin{enumerate}
\item $a_{n+1} < a_n$ and $b_{n+1} > b_n$ for all $n \in \mathbb{N}$, $a_1 \leq c$, $b_1 \geq d$,
\item $\lim_{n \rightarrow \infty} a_n = a$, $\lim_{n \rightarrow \infty} b_n = b$.
\end{enumerate} 
We let $\rho_n = 1 - \mu([a_n, b_n])$ and observe that since $\mu \in \mathcal{M}_{\theta}(\Delta)$ (and thus has no atoms), we have $\lim_{n \rightarrow \infty} \rho_n = 0$. The latter implies that there exists $N_0 \in \mathbb{N}$ such that for $n \geq N_0$ we have $\rho_n \leq \epsilon^2$. 

Let us define the functions $g_n$ through 
\begin{equation}\label{S3DefGN}
g_n(x) = \begin{cases} f(x) \cdot {\bf 1}\{ x \in [a_n, b_n]\} + \frac{\rho_n}{\lambda( A^c_{\epsilon})}  \cdot {\bf 1} \{ x\in A_\epsilon^c\} &\mbox{ if } n \geq N_0, \\ \theta^{-1} \cdot {\bf 1}\{ x \in [c, c+ \theta]\} & \mbox{ if } n < N_0. \end{cases}
\end{equation}
It is clear from the definition of $\rho_n$ that $g_n(x)$ are probability density functions on $\mathbb{R}$, and we let $\mu_n$ be the corresponding measures.\\

{\bf \raggedleft Step 2.} We proceed to prove that $\mu_n$ satisfy the conditions of the lemma. 

We first check that $\mu_n \in \mathcal{M}_{\theta}(\mathbb{R})$. The latter is clear if $n < N_0$, so we assume that $n \geq N_0$. From (\ref{S3DefGN}), we see that  
$$ g_n(x) \leq f(x) \leq \theta^{-1} \mbox{ if $x \not \in A_{\epsilon}^c$, and } g_n(x) \leq (\theta^{-1} - \epsilon) + \frac{\rho_n}{\lambda( A^c_{\epsilon})} \leq \theta^{-1},$$
where we used that $\rho_n \leq \epsilon^2$ and $\lambda( A^c_{\epsilon}) \geq \epsilon$. Thus $\mu_n \in \mathcal{M}_{\theta}(\mathbb{R})$.

By construction, we know that $\mu_n$ are supported on $[a_n, b_n]$, and $\mu_n$ weakly converge to $\mu$ (say by the Portmanteau theorem). Thus we only need to show that 
\begin{equation*}
\limsup_{n \rightarrow \infty} E_V(\mu_n) \leq E_V(\mu) \mbox{ and }\liminf_{n \rightarrow \infty} E_V(\mu_n) \geq E_V(\mu),
\end{equation*}
which in view of (\ref{S2EFEq}) is equivalent to
\begin{equation}\label{LBE}
\limsup_{n \rightarrow \infty} E_{\mathcal{V}}(\nu_n) \leq E_{\mathcal{V}}(\nu) \mbox{ and }\liminf_{n \rightarrow \infty} E_{\mathcal{V}}(\nu_n) \geq E_{\mathcal{V}}(\nu),
\end{equation}
where $\nu_n = T_* \mu_n$, $\nu  = T_*\mu$ as in (\ref{S2Push}). In the remainder we focus on proving (\ref{LBE}).\\

As $F_{\mathcal{V}}(\vec{x}, \vec{y})$ as in (\ref{S2Ker}) is lower semi-continuous on $\mathcal{S} \times \mathcal{S}$, we know that there exists an increasing sequence of continuous functions $ F^M_{\mathcal{V}}(\vec{x}, \vec{y})$ that converge pointwise to $F_{\mathcal{V}}(\vec{x}, \vec{y})$ as $M \rightarrow \infty$. The latter shows that 
\begin{equation*}
\liminf_{n \rightarrow \infty} E_{\mathcal{V}}(\nu_n) \geq \liminf_{n \rightarrow \infty} \iint_{\mathcal{S}^2} F^M_{\mathcal{V}}(\vec{x}, \vec{y}) \nu_n(d\vec{x}) \nu_n(d\vec{y}) = \iint_{\mathcal{S}^2} F^M_{\mathcal{V}}(\vec{x}, \vec{y}) \nu(d\vec{x}) \nu(d\vec{y}),
\end{equation*}
where we used the continuity of $F^M_{\mathcal{V}}$ and the fact that $\nu_n$ converge weakly to $\nu$ (this follows from the weak convergence of $\mu_n$ to $\mu$ and Lemma \ref{S2TDiff}). By the monotone convergence theorem, the right side above converges to $ E_{\mathcal{V}}(\nu)$ as $M\rightarrow \infty$, which proves the second inequality of (\ref{LBE}).\\

 Let us write $\mu_n^1$ to be the probability measure with density $(1- \rho_n)^{-1} f(x) \cdot {\bf 1}\{ x \in [a_n, b_n]\} $ and $\mu^2$ the one with density $[\lambda( A^c_{\epsilon})]^{-1} \cdot {\bf 1} \{ x\in A_\epsilon^c\}$ for $n \geq N_0$. We also let $\nu_{n}^1 = T_* \mu_{n,1}$ and $\nu^2 = T_* \mu^2$. Then, we have for $n \geq N_0$  
$$\mu_n = (1- \rho_n) \cdot \mu_n^1 + \rho_n \cdot \mu^2\mbox{ and }\nu_n = (1- \rho_n) \cdot \nu_n^1 + \rho_n \cdot \nu^2.$$
Using the convexity of $E_{\mathcal{V}}$ on $\mathcal{M}(\mathcal{S})$, see Proposition \ref{S2GRF}, we know that 
$$E_{\mathcal{V}}(\nu_n)  \leq (1 - \rho_n)  \cdot E_{\mathcal{V}}(\nu^1_n) + \rho_n \cdot E_{\mathcal{V}}(\nu^2).$$
Notice that 
$$ \left|  E_{\mathcal{V}}(\nu^2) \right| = \left|  E_{V}(\mu^2) \right| \leq \epsilon^{-2} \cdot \iint_{[c - \epsilon_1, d + \theta + \epsilon_1]^2} |\log|x-y|| dx dy + \sup_{x \in [c - \epsilon_1, d + \theta + \epsilon_1]} |V(x)| < \infty,$$
which implies that $\lim_{n \rightarrow \infty} \rho_n \cdot E_{\mathcal{V}}(\nu^2) = 0$ as $\lim_{n \rightarrow \infty} \rho_n = 0$.

Combining the last few statements, we see that to prove the first inequality in (\ref{LBE}) it suffices to show that
\begin{equation}\label{FDE2}
\begin{split}
&\lim_{n \rightarrow \infty} (1 - \rho_n)  \cdot E_{\mathcal{V}}(\nu^1_n) =  E_{\mathcal{V}}(\nu) \iff \lim_{n \rightarrow \infty} (1 - \rho_n)^2  \cdot E_{\mathcal{V}}(\nu^1_n) =  E_{\mathcal{V}}(\nu)  \iff \\
& \lim_{n \rightarrow \infty} \iint_{\mathcal{S}^2}{\bf 1}\{\vec{x}, \vec{y}\in T([a_n, b_n])\} \cdot F_{\mathcal{V}}(\vec{x}, \vec{y}) \nu(d\vec{x})\nu(d\vec{y}) =  \iint_{\mathcal{S}^2} F_{\mathcal{V}}(\vec{x}, \vec{y}) \nu(d\vec{x})\nu(d\vec{y}).
\end{split}
\end{equation} 
Notice that $T([a_n, b_n])$ form an increasing sequence of sets and since $\nu(T([a_n, b_n])) = 1 - \rho_n$, we have $\nu \left( \cup_{n \geq 1} T([a_n, b_n]) \right) = 1$. The latter and the lower boundedness of $F_{\mathcal{V}}(\vec{x}, \vec{y})$ on $\mathcal{S}\times \mathcal{S}$ allows us to conclude the second line in (\ref{FDE2}) from the monotone convergence theorem.
\end{proof}

%
\section{Applications}\label{Section4} In this section, we give two brief applications of Theorem \ref{ThmMain}. In Section \ref{Section4.1}, we consider certain measures related to Jack symmetric functions, and in Section \ref{Section4.2}, we consider discrete analogues of the Cauchy ensembles from \cite[Example 1.3]{Hardy}. We continue with the notation from Section \ref{Section1.2}.

%
\subsection{Jack measures}\label{Section4.1} Fix $\theta, t \in (0, \infty)$, $N \in \mathbb{N}$. Let  $\P^{\operatorname{Jack}}_{N }$ be the measure on $\mathbb{W}^{\theta}_{N}(0, \infty) $, given by
\begin{align}\label{jackM2}
\P^{\operatorname{Jack}}_{N } (\ell_1, \dots, \ell_N)= \frac{1}{Z_N} \cdot \prod_{1\le i<j\le N} Q_{\theta}(\ell_i - \ell_j) \prod_{i=1}^N e^{-  \theta N V_N(\ell_i/N)},
\end{align}
where $Q_{\theta}$ is as in (\ref{PDef}),
\begin{equation}\label{jackM3}
V_N(x) = \frac{1}{\theta N}\log\frac{\Gamma(Nx+1 )}{(t\theta N)^{Nx}}, \mbox{ and }Z_N=  \Gamma(\theta)^{-N}e^{t\theta N^2}(t \theta N)^{\frac{N(N-1)}2} \cdot \prod_{i=1}^N\Gamma(i\theta).
\end{equation}
The measure $\P^{\operatorname{Jack}}_{N }$ arises as a special case of the {\em Jack measures}, which are probability measures on partitions related to Jack symmetric functions, and in turn are special cases of the Macdonald measures from \cite{bor-cor}. We refer the interested reader to \cite[Section 6.3]{DD21}, where the relationship to Jack symmetric functions is explained in detail and it is shown that $\P^{\operatorname{Jack}}_{N }$ is a well-defined probability measure on $\mathbb{W}^{\theta}_{N}(0, \infty) $. We also mention here that the measure $\P^{\operatorname{Jack}}_{N } $ was previously studied in \cite{misha}, where it arises as the time $tN$ distribution of a certain Markov process on partitions, which is a discrete version of $\beta$-Dyson Brownian motion (here $\beta = 2 \theta$).

We have the following result about the measures $\P^{\operatorname{Jack}}_{N }$.
\begin{corollary}\label{S4LDP1}Fix $\theta, t \in (0, \infty)$, $N \in \mathbb{N}$ and let $\P^{\operatorname{Jack}}_{N }$ be as in (\ref{jackM2}). Let $\mu_N = N^{-1}\sum_{i = 1}^N \delta_{\ell_i/N}$ be the empirical measures of $ (\ell_1, \dots, \ell_N)$, distributed according to $\P^{\operatorname{Jack}}_{N }$. Then, the sequence of measures in $\mathcal{M}(\mathbb{R})$, given by the laws of $\mu_N$, satisfies an LDP with speed $N^2$ and good rate function
\begin{equation}\label{IVJack}
I_V^{\operatorname{Jack}}(\mu):= 
\begin{cases} \theta ( E_V(\mu)  - E_V(\me^{\operatorname{Jack}}) ) &\mbox{ for $\mu \in \mathcal{M}_{\theta}([0,\infty))$} \\
 \infty & \mbox{ for $\mu \in \mathcal{M}(\mathbb{R}) \setminus  \mathcal{M}_{\theta}([0,\infty))$ }
 \end{cases}
\end{equation}
where $E_V$ is as in (\ref{S1EF}) and $V(x) = \theta^{-1} \left(x\log x - \log(e t \theta) x \right)$.  Here $\me^{\operatorname{Jack}}$ is a probability measure on $[0, \infty)$ with density $\phi^{\operatorname{Jack}}$, which for $t\ge 1$ is equal to
\begin{align*}
\phi^{\operatorname{Jack}}(x)=\begin{cases} (\theta\pi)^{-1}\operatorname{arccot}\left(\dfrac{x+\theta(t-1)}{\sqrt{4\theta tx-[x+\theta(t-1)]^2}}\right) &  \mbox{for } x\in(\theta(\sqrt{t}-1)^2,\theta(\sqrt{t}+1)^2), \\ 0 & \mbox{otherwise.}
\end{cases}
\end{align*}
and for $t\in (0,1)$ is given by
\begin{align*}
\phi^{\operatorname{Jack}}(x)=\begin{cases}
(\theta\pi)^{-1}\operatorname{arccot}\left(\dfrac{x+\theta(t-1)}{\sqrt{4\theta tx-[x+\theta(t-1)]^2}}\right) &  \mbox{for } x\in(\theta(\sqrt{t}-1)^2,\theta(\sqrt{t}+1)^2),
\\ \theta^{-1} & \mbox{for }0 \leq x<\theta(\sqrt{t}-1)^2,  \\ 0 & \mbox{for }  x>\theta(\sqrt{t}+1)^2. 
\end{cases}
\end{align*} 
\end{corollary}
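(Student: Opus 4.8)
The plan is to deduce Corollary \ref{S4LDP1} directly from Theorem \ref{ThmMain} and then to identify the equilibrium measure $\me^{\theta}$ furnished by Theorem \ref{ThmFunct} with the explicit measure of density $\phi^{\operatorname{Jack}}$. Thus the argument splits into two parts: (i) checking that $\P^{\operatorname{Jack}}_{N}$ fits the framework of Theorem \ref{ThmMain} with $\Delta = [0,\infty)$ and potential $V(x)=\theta^{-1}(x\log x-\log(et\theta)x)$, which yields the LDP with rate function $I_V^{\theta}$; and (ii) verifying that the measure $\mu$ with density $\phi^{\operatorname{Jack}}$ satisfies the variational characterization of Theorem \ref{ThmFunct}(4), so that uniqueness of the minimizer forces $\mu = \me^{\theta} =: \me^{\operatorname{Jack}}$ and $I_V^{\theta} = I_V^{\operatorname{Jack}}$.

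For part (i), Assumption 1 holds with $a_N=0$, $b_N=\infty$, so $a=0$, $b=\infty$ and $\Delta = [0,\infty)$. The values of $V$ and of the functions $V_N$ in (\ref{jackM3}) on $(-\infty,0)$ are irrelevant for $\P^{\operatorname{Jack}}_{N}$ and for $I_V^{\theta}$ (which is $+\infty$ off $\mathcal{M}_{\theta}([0,\infty))$), so I would extend all of them by $x^2$ on $(-\infty,0)$; then $V:\R\to\R$ is continuous and satisfies (\ref{S1VGrowth}) because $V(x)\sim\theta^{-1}x\log x$ as $x\to+\infty$. That $\P^{\operatorname{Jack}}_{N}$ is well-defined is proved in \cite[Section 6.3]{DD21}; alternatively (\ref{VNgrowth}) holds with $\theta'_N=1$ since Stirling gives $N\theta V_N(x)\sim Nx\log(x/(t\theta))$ as $x\to+\infty$. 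A routine application of Stirling's approximation with the explicit remainder (\ref{EqnRob}) gives $\sup_{x\in K}|V_N(x)-V(x)| = O_K(N^{-1}\log N)\to 0$ for every compact $K\subseteq\R$ — the only delicate point is $x$ near $0$, handled using $V_N(0)=0=V(0)$ and the continuity of $x\mapsto x\log x$ — while the lower bound $\Gamma(Nx+1)\ge (Nx/e)^{Nx}$ (valid for $Nx\ge 1$) gives $V_N(x)\ge\theta^{-1}x\log(x/(et\theta))$ for $x\ge 1$ and all $N\ge 1$, which exceeds $(1+\xi)\log(1+x^2)$ for $x\ge T$ with $T$ large, so (\ref{FeGrowth}) holds. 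Hence Assumption 2(b) is satisfied and Theorem \ref{ThmMain} applies, giving the LDP with speed $N^2$ and good rate function $I_V^{\theta}$ with $\Delta=[0,\infty)$.

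For part (ii), let $\mu$ have density $\phi^{\operatorname{Jack}}$. One reads off from the formulas that $0\le\phi^{\operatorname{Jack}}\le\theta^{-1}$ (since $\operatorname{arccot}$ takes values in $(0,\pi)$) and that $\mathsf{Supp}(\mu)\subseteq[0,\infty)$ is compact, so $\mu\in\mathcal{M}_{\theta}([0,\infty))$ and $E_V(\mu)<\infty$. Writing $h(y)$ for the left-hand side in Theorem \ref{ThmFunct}(4), one computes $h'(y) = -\,\mathrm{p.v.}\!\int\frac{\phi^{\operatorname{Jack}}(x)}{y-x}\,dx + \tfrac12 V'(y)$ with $V'(y)=\theta^{-1}\log(y/(t\theta))$, so the equality part of the conditions amounts to the singular integral identity $\mathrm{p.v.}\!\int\frac{\phi^{\operatorname{Jack}}(x)}{y-x}\,dx = \tfrac{1}{2\theta}\log\frac{y}{t\theta}$ on the non-saturated part of the support. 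I would establish this, together with the required inequalities off the support and on the saturated region $[0,\theta(1-\sqrt t)^2]$ that appears when $t\in(0,1)$, by computing the Stieltjes transform $G(z)=\int\frac{\phi^{\operatorname{Jack}}(x)}{z-x}\,dx$ in closed form: one finds an explicit branch of $\tfrac{1}{2\theta}\log(\,\cdot\,)$ whose boundary imaginary part on the cut recovers $\phi^{\operatorname{Jack}}$ (this is where $\arg(u+iv)=\operatorname{arccot}(u/v)$ enters) and whose boundary real part recovers $\tfrac12 V'$ up to the additive constant coming from the $\tfrac12\int\log(1+x^2)\,\mu(dx)$ term; choosing $c$ to absorb that constant then yields both inequalities. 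Alternatively, since $\P^{\operatorname{Jack}}_{N}$ is the time-$tN$ marginal of the discrete $\beta$-Dyson process, one may instead quote the limit-shape computation from \cite{misha} (or \cite[Section 6.3]{DD21}) and invoke uniqueness of the minimizer in Theorem \ref{ThmFunct}(2).

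The main obstacle is exactly this identification: carrying out the Stieltjes-transform computation for $\phi^{\operatorname{Jack}}$ and then verifying the obstacle-problem inequalities globally on $[0,\infty)$ — in particular checking that the density matches $\theta^{-1}$ continuously at the hard edge $x=\theta(1-\sqrt t)^2$ when $t<1$, and that $h\ge c$ holds on $[\theta(\sqrt t+1)^2,\infty)$ (and, when $t<1$, $h\le c$ on the saturated interval). Everything else — the Stirling asymptotics for $V_N$, the growth estimates needed for Assumption 2(b), and the appeal to Theorem \ref{ThmMain} — is routine.
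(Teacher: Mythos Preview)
Your proposal is correct and follows essentially the same route as the paper: verify Assumptions 1 and 2(b) via Stirling estimates for $V_N$ (the paper cites \cite[Section 6.3]{DD21} for these bounds rather than redoing them), apply Theorem \ref{ThmMain}, and then identify $\me^{\theta}$ with $\me^{\operatorname{Jack}}$. The only substantive difference is in part (ii): the paper simply quotes \cite[Lemma 6.11]{DD21} for the fact that $\me^{\operatorname{Jack}}$ is the unique minimizer of $E_V$ on $\mathcal{M}_{\theta}([0,\infty))$, whereas you sketch a direct verification of the variational conditions in Theorem \ref{ThmFunct}(4) via the Stieltjes transform (while also noting the citation alternative). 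Either works; the paper's choice is shorter, yours is more self-contained. One cosmetic point: the paper extends $V,V_N$ to $(-\infty,0)$ by even reflection rather than by $x^2$, but both extensions are continuous at $0$ and satisfy Assumption 2(b), so this makes no difference.
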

\begin{proof} We first observe that $\P^{\operatorname{Jack}}_{N }$ is of the form (\ref{PDef}) with $a_N = 0$ and $b_N = \infty$ for each $N \in \mathbb{N}$ so that Assumption 1 from Section \ref{Section1.2} is satisfied with $\Delta = [0, \infty)$.

As explained in \cite[Section 6.3]{DD21}, the functions $V_N$ are continuous on $[0, \infty)$ and there exists a constant $A > 0$, depending on $\theta, t$, such that for all $N \geq 1$ and $x \geq 0$ we have 
$$A + V_N(x) \geq 2 \log (1 + x^2).$$
In addition, in \cite[Section 6.3]{DD21} it is shown that for each $n \in \mathbb{N}$ there is a constant $A_n > 0$ (depending on $n$, $\theta$ and $t$) such that for all $N \geq 1$
$$\sup_{x \in [0, n]} |V_N(x) - V(x)| \leq A_n N^{-1}\log (N+1).$$
The latter observations show that $V_N$ and $V$ satisfy the conditions in Assumption 2(b) (here we extend $V$ and $V_N$ to $\mathbb{R}$ by setting $V(-x) = V(x)$ and $V_N(-x) = V_N(x)$ for $x \geq 0$). We conclude from Theorem \ref{ThmMain} that the sequence of measures in $\mathcal{M}(\mathbb{R})$, given by the laws of $\mu_N$, satisfies an LDP with speed $N^2$ and good rate function $I_V^{\theta}$ as in that theorem.

What remains is to show that $I_V^{\theta} = I_V^{\operatorname{Jack}}$ as in (\ref{IVJack}). From \cite[Lemma 6.11]{DD21} we have that $\me^{\operatorname{Jack}}$ is the unique minimizer of $E_V$ on $\mathcal{M}_{\theta}([0,\infty))$, which in view of Theorem \ref{ThmFunct} shows that $I_V^{\theta} = I_V^{\operatorname{Jack}}$.
\end{proof}

%
\subsection{Discrete Cauchy ensembles}\label{Section4.2} Fix $\theta \in (1/2, \infty)$, $N \in \mathbb{N}$. Let  $\P^{\operatorname{Cauchy}}_{N}$ be the measure on $\mathbb{W}^{\theta}_{N}(-\infty, \infty) $, given by
\begin{align}\label{CauchyM2}
\P^{\operatorname{Cauchy}}_{N } (\ell_1, \dots, \ell_N)= \frac{1}{Z_N} \cdot \prod_{1\le i<j\le N} Q_{\theta}(\ell_i - \ell_j) \prod_{i=1}^N e^{-  \theta N V(\ell_i/N)},
\end{align}
where $Q_{\theta}$ is as in (\ref{PDef}), $V(x) = \log (1 + x^2)$ and $Z_N$ is a normalization constant. We mention that $Z_N$ is finite and $\P^{\operatorname{Cauchy}}_{N }$ a well-defined probability measure on $\mathbb{W}^{\theta}_{N}(-\infty, \infty)$ from Lemma \ref{WellDef}. Here it is important that $\theta > 1/2$ and if $\theta \in (0, 1/2]$ then (\ref{CauchyM2}) is not a well-defined probability measure since for $W(\ell) = \prod_{1 \leq i < j \leq N} Q_{\theta}(\ell_i-\ell_j)  \prod_{i = 1}^N e^{- \theta N V(\ell_i/N)}$ we have
\begin{equation*}
\begin{split}
&\sum_{{\ell} \in \mathbb{W}^{\theta}_{N}(-\infty, \infty) } W({\ell}) \geq \sum_{\substack{ {\ell} \in \mathbb{W}^{\theta}_{N}(-\infty, \infty): \\ \ell_i = (N-i)\theta \mbox{ for }i = 2, \dots, N }} W({\ell}) = C_1\sum_{n = 0}^{\infty} \prod_{j = 2}^N  Q_{\theta}(n + (j-1)\theta  )   \\
&\times e^{-  \theta N \log (1 + [n + (N-1) \theta]^2/N^2)} \geq C_2\sum_{n = 0}^{\infty} e^{ \sum_{j = 2}^N 2\theta \log (n + (j-1)\theta ) -  \theta N \log (1 + [n + (N-1) \theta]^2/N^2)} = \infty.
\end{split}
\end{equation*}
In the last set of inequalities we have that $C_1, C_2$ are positive constants, depending on $\theta$ and $N$. The first inequality on the second line follows from Lemma \ref{InterApprox} and the last equality follows from the fact that if $c_n$ is the $n$-th summand we have $c_n \sim n^{-2\theta}$ as $n \rightarrow \infty$ and $\theta \in (0, 1/2]$. 

When $\theta = 1$ we recall that $\ell_i \in \mathbb{Z}$ for all $i \in \{1, \dots, N\}$ and $Q_{\theta}(x) = x^2$, in which case we observe that $\P^{\operatorname{Cauchy}}_{N }$ from (\ref{CauchyM2}) is a discrete analogue of the {\em Cauchy ensemble} from \cite[Example 1.3]{Hardy}.

We have the following result about the measures $\P^{\operatorname{Cauchy}}_{N }$.
\begin{corollary}\label{S4LDP2}Fix $\theta \in (1/2, \pi]$, $N \in \mathbb{N}$ and let $\P^{\operatorname{Cauchy}}_{N}$ be as in (\ref{CauchyM2}). Let $\mu_N = N^{-1}\sum_{i = 1}^N \delta_{\ell_i/N}$ be the empirical measures of $(\ell_1, \dots, \ell_N)$, distributed according to $\P^{\operatorname{Cauchy}}_{N} $. Then, the sequence of measures in $\mathcal{M}(\mathbb{R})$, given by the laws of $\mu_N$, satisfies an LDP with speed $N^2$ and good rate function
\begin{equation}\label{IVCauchy}
I_V^{\operatorname{Jack}}(\mu):= 
\begin{cases} \theta ( E_V(\mu)  - E_V(\me^{\operatorname{Cauchy}}) ) &\mbox{ for $\mu \in \mathcal{M}_{\theta}(\mathbb{R})$} \\
 \infty & \mbox{ for $\mu \in \mathcal{M}(\mathbb{R}) \setminus  \mathcal{M}_{\theta}(\mathbb{R})$ }
 \end{cases}
\end{equation}
where $E_V$ is as in (\ref{S1EF}). Here $\me^{\operatorname{Cauchy}}$ is the Cauchy distribution on $\mathbb{R}$, i.e. the one with density 
\begin{align*}
\phi^{\operatorname{Cauchy}}(x)= \frac{1}{\pi(1 + x^2)}. 
\end{align*}
\end{corollary}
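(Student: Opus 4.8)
The plan is to verify that $\P^{\operatorname{Cauchy}}_N$ satisfies the hypotheses of Theorem \ref{ThmMain}, and then to identify the equilibrium measure $\me^{\theta}$ in the resulting rate function with the Cauchy law $\me^{\operatorname{Cauchy}}$ by means of the variational characterization in Theorem \ref{ThmFunct}(4). For the first part: here $a_N=-\infty$ and $b_N=\infty$ for every $N$, so Assumption 1 holds with $a=-\infty$, $b=\infty$ and $\Delta=\R$; the potential is $N$-independent, $V_N\equiv V$ with $V(x)=\log(1+x^2)$, which is continuous and satisfies (\ref{S1VGrowth}) (the $\liminf$ there equals $0$). To see that (\ref{VNgrowth}) holds, note $N\theta V_N(x)-(\theta_N'+(N-1)\theta)\log(1+x^2)=(\theta-\theta_N')\log(1+x^2)$, so choosing $\theta_N'=\theta>1/2$ makes this quantity identically $0$, which is bounded below; this is exactly where the hypothesis $\theta>1/2$ enters, consistent with the divergence computation preceding the statement which shows the measure is ill-defined for $\theta\le 1/2$. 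Assumption 2(a) then holds trivially with $\epsilon_N\equiv 0$. Theorem \ref{ThmMain} yields the LDP with speed $N^2$ and good rate function $I_V^{\theta}$ of (\ref{S1Rate}) with $\Delta=\R$, and since $F_V^{\theta}=E_V(\me^{\theta})$ by Theorem \ref{ThmFunct}(2), it remains only to prove $\me^{\theta}=\me^{\operatorname{Cauchy}}$.

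For this I would apply Theorem \ref{ThmFunct}(4) with $\mu=\me^{\operatorname{Cauchy}}$. Its density $(\pi(1+x^2))^{-1}$ is bounded by $\pi^{-1}\le\theta^{-1}$ precisely because $\theta\le\pi$, so $\me^{\operatorname{Cauchy}}\in\mathcal{M}_{\theta}(\R)$; this is where the upper bound $\theta\le\pi$ is used, and it is exactly the condition that keeps the $\mathcal{M}_{\theta}$-constraint inactive. Both $\mathsf{Supp}(\me^{\operatorname{Cauchy}})$ and $\mathsf{Supp}(\theta^{-1}\lambda-\me^{\operatorname{Cauchy}})$ equal $\R$ (the second density $\theta^{-1}-(\pi(1+x^2))^{-1}$ is nonnegative and positive for a.e.\ $x$), so the two inequalities in Theorem \ref{ThmFunct}(4) both collapse to the single requirement that the effective potential
\[
U(y):=\int_{\R}\Big(\log|x-y|^{-1}+\tfrac12\log(1+x^2)\Big)\frac{dx}{\pi(1+x^2)}+\tfrac12\log(1+y^2)
\]
be Lebesgue-a.e.\ equal to a constant $c$.

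The only genuinely non-routine step is computing $U$, and this is what I expect to be the main (if modest) obstacle. The key classical identity is that the logarithmic potential of the standard Cauchy law is
\[
\int_{\R}\log|x-y|\,\frac{dx}{\pi(1+x^2)}=\tfrac12\log(1+y^2),\qquad y\in\R,
\]
which one proves by differentiating in $y$ and evaluating the resulting principal-value integral by partial fractions (using $\mathrm{p.v.}\int_{\R}\frac{dx}{(x-y)(1+x^2)}=-\pi y/(1+y^2)$ together with the fact that the potential vanishes at $y=0$). Combined with $\frac{1}{\pi}\int_{\R}\frac{\log(1+x^2)}{1+x^2}\,dx=2\log 2$ (substitute $x=\tan\phi$ and use $\int_0^{\pi/2}\log\cos\phi\,d\phi=-\tfrac{\pi}{2}\log 2$), this gives $U(y)=-\tfrac12\log(1+y^2)+\log 2+\tfrac12\log(1+y^2)=\log 2$ for every $y$. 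Taking $c=\log 2$, all hypotheses of Theorem \ref{ThmFunct}(4) are met — the same two integrals also give $E_V(\me^{\operatorname{Cauchy}})=-\log 2+2\log 2=\log 2<\infty$ — hence $\me^{\theta}=\me^{\operatorname{Cauchy}}$. Comparing (\ref{S1Rate}) with (\ref{IVCauchy}) then finishes the proof.
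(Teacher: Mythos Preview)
Your proof is correct. The verification of Assumptions 1 and 2(a) and the application of Theorem~\ref{ThmMain} are exactly as in the paper. For the identification $\me^{\theta}=\me^{\operatorname{Cauchy}}$, the paper's main proof instead cites \cite[Example~1.3 and Remark~2.2]{Hardy}, which shows via a symmetry argument that $\me^{\operatorname{Cauchy}}$ minimizes $E_V$ over all of $\mathcal{M}(\R)$; since $\me^{\operatorname{Cauchy}}\in\mathcal{M}_\theta(\R)$ for $\theta\le\pi$, it must then also minimize over the smaller set $\mathcal{M}_\theta(\R)$, giving $\me^{\theta}=\me^{\operatorname{Cauchy}}$ by Theorem~\ref{ThmFunct}(2). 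Your route through the variational characterization in Theorem~\ref{ThmFunct}(4) and the explicit computation of the logarithmic potential of the Cauchy law is precisely the alternative argument the paper records in the Remark immediately following its proof. The paper's approach is shorter (it outsources the potential-theoretic computation), while yours is self-contained and makes the constant $c=\log 2$ explicit.
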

\begin{remark} Let us explain the restriction of the parameter $\theta$ in Corollary \ref{S4LDP2}. As we explained in the beginning of the section, we require that $\theta > 1/2$ so that $\P^{\operatorname{Cauchy}}_{N}$ is well-defined. The requirement that $\theta \leq \pi$ is imposed so that the minimizer of $ E_V$ is precisely the Cauchy distribution. In general, Theorem \ref{ThmMain} is applicable to $ \P^{\operatorname{Cauchy}}_{N}$ for any $\theta > 1/2$ and implies that the laws of $\mu_N$, satisfies an LDP with speed $N^2$ and good rate function $I_V^{\theta}$ as in Theorem \ref{ThmFunct}. If $\me^{\theta}$ is as in Theorem \ref{ThmFunct} for $\Delta = \mathbb{R}$ and $V = \log (1 + x^2)$ we will see in the proof of Corollary \ref{S4LDP2} that $\me^{\theta} = \me^{\operatorname{Cauchy}}$, provided that $\theta \in (0, \pi]$. If $\theta > \pi$ then $\me^{\operatorname{Cauchy}} \not \in \mathcal{M}_{\theta}(\mathbb{R})$ and so we necessarily have $\me^{\theta} \neq \me^{\operatorname{Cauchy}}$.

 It would be interesting to also find a formula for $\me^{\theta}$ when $\theta > \pi$, and one possible way to approach this question is to first {\em guess} a formula $\me^{\theta}$ using ideas that are similar to those in \cite[Section 4]{ds97}, \cite[Section 5]{fe} and \cite[Section 6]{jo}. Once a formula for $\me^{\theta}$ is obtained, one can use the variational characterization of the equilibrium measure, see part (4) of Theorem \ref{ThmFunct}, to verify that it is indeed the correct one. 

We will not pursue the formula for $\me^{\theta}$ when $\theta > \pi$ and refer the interested reader to  \cite[Section 4]{ds97}, \cite[Section 5]{fe}, \cite[Section 6]{jo} and more recently \cite[Section 6]{DD21}, for related contexts where the approach we described above has been carried out.
\end{remark}

\begin{proof} We first observe that $\P^{\operatorname{Cauchy}}_{N }$ is of the form (\ref{PDef}) with $a_N = -\infty$ and $b_N = \infty$ for each $N \in \mathbb{N}$ so that Assumption 1 from Section \ref{Section1.2} is satisfied with $\Delta = \mathbb{R}$. It is also clear that $V(x)$ satisfies the conditions of Assumption 2(a) with $\theta_N' = \theta$ in (\ref{VNgrowth}). We conclude from Theorem \ref{ThmMain} that the sequence of measures in $\mathcal{M}(\mathbb{R})$, given by the laws of $\mu_N$, satisfies an LDP with speed $N^2$ and good rate function $I_V^{\theta}$ as in that theorem. This is true for any $\theta > 1/2$.

What remains is to show that $I_V^{\theta} = I_V^{\operatorname{Cauchy}}$ as in (\ref{IVCauchy}) when $\theta \in (1/2,\pi]$. In \cite[Example 1.3 and Remark 2.2]{Hardy} it was shown using an elegant symmetry argument that $\me^{\operatorname{Cauchy}}$ is the unique minimizer of $E_V$ over $\mathcal{M}(\mathbb{R})$ and since $\me^{\operatorname{Cauchy}} \in \mathcal{M}_{\theta}(\mathbb{R})$ when $\theta \in (0, \pi]$ we conclude that $ \me^{\theta}  = \me^{\operatorname{Cauchy}}$ from part (2) of Theorem \ref{ThmFunct}. This proves that $I_V^{\theta} = I_V^{\operatorname{Cauchy}}$ when $\theta \in (1/2,\pi]$.
\end{proof}
\begin{remark}
In the last part of the above proof, one can also deduce that $ \me^{\theta}  = \me^{\operatorname{Cauchy}}$ when $\theta \in (1/2,\pi]$ from part (4) of Theorem \ref{ThmFunct}. Indeed, by a direct computation for all $y \in \mathbb{R}$ 
\begin{equation}\label{S4ER1}
\int_\R \left(\log|x-y|^{-1}+\frac{1}{2}\log( 1+x^2) \right)\me^{\operatorname{Cauchy}}(dx) + \frac{1}{2} \log(1 + y^2) = \int_\R \frac{\log( 1+x^2) dx}{2\pi (1 + x^2)} \in (0,\infty).
\end{equation}
One also has from (\ref{S4ER1}) that  
\begin{equation*}
\begin{split}
E_V(\me^{\operatorname{Cauchy}}) = \hspace{2mm}& \int_{\mathbb{R}}\int_\R \left(\log|x-y|^{-1}+\frac{1}{2}\log( 1+x^2) + \frac{1}{2}\log( 1+y^2)  \right)\me^{\operatorname{Cauchy}}(dx) \me^{\operatorname{Cauchy}}(dy) \\
& = \int_\R \int_\R \frac{\log( 1+x^2) dx}{2\pi (1 + x^2)} \cdot  \frac{dy}{\pi (1 + y^2)} = \int_\R \frac{\log( 1+x^2) dx}{2\pi (1 + x^2)}  
\end{split}
\end{equation*}
The fact that the integral in (\ref{S4ER1}) does not depend on $y \in \mathbb{R}$, $E_V(\me^{\operatorname{Cauchy}}) < \infty$, and $ \me^{\operatorname{Cauchy}} \in \mathcal{M}(\mathbb{R})$ together imply $\me^{\theta}  = \me^{\operatorname{Cauchy}}$ in view of part (4) of Theorem \ref{ThmFunct}.
\end{remark}

\bibliographystyle{alphaabbr} 
\bibliography{PD}

\begin{thebibliography}{BGG17}

\bibitem[AGZ10]{agz}
G.~Anderson, A.~Guionnet, and O.~Zeitouni.
\newblock {\em An introduction to random matrices}.
\newblock Number 118. Cambridge University Press, 2010.

\bibitem[BAG97]{BG97}
G.~Ben~Arous and A.~Guionnet.
\newblock Large deviations for {W}igner's law and {V}oiculescu's
  non-commutative entropy.
\newblock {\em Probab. Theory Relat. Fields}, 108(4):517--542, 1997.

\bibitem[BC14]{bor-cor}
A.~Borodin and I.~Corwin.
\newblock Macdonald processes.
\newblock {\em Probab. Theory Relat. Fields}, 158(1):225--400, 2014.

\bibitem[BGG17]{BGG}
A.~Borodin, V.~Gorin, and A.~Guionnet.
\newblock Gaussian asymptotics of discrete $\beta$-ensembles.
\newblock {\em Publications math{\' e}matiques de l'IH{\' E}S}, 125(1):1--78,
  2017.

\bibitem[CKL98]{CKL98}
U.~Cegrell, S.~Kolodziej, and N.~Levenberg.
\newblock Two problems on potential theory for unbounded sets.
\newblock {\em Math. Scand.}, 83(2):265--276, 1998.

\bibitem[DD21]{DD21}
S.~Das and E.~Dimitrov.
\newblock Large deviations for discrete $\beta$-ensembles.
\newblock {\em arXiv preprint arXiv:2103.15227}, 2021.

\bibitem[DS97]{ds97}
P.~Dragnev and E.~Saff.
\newblock Constrained energy problems with applications to orthogonal
  polynomials of a discrete variable.
\newblock {\em J. Anal. Math.}, 72(1):223--259, 1997.

\bibitem[DZ98]{DZ}
A.~Dembo and O.~Zeitouni.
\newblock {\em Large {D}eviations {T}echniques and {A}pplications, 2nd ed.}
\newblock Springer-Verlag, New York, Inc., 1998.

\bibitem[F{\'e}r08]{fe}
D.~F{\'e}ral.
\newblock On large deviations for the spectral measure of discrete {C}oulomb
  gas.
\newblock In {\em S{\'e}minaire de probabilit{\'e}s XLI}, pages 19--49.
  Springer, 2008.

\bibitem[For10]{forrester}
P.~Forrester.
\newblock {\em Log-gases and random matrices ({L}{M}{S}-34)}.
\newblock Princeton University Press, 2010.

\bibitem[GH19]{huang}
A.~Guionnet and J.~Huang.
\newblock Rigidity and edge universality of discrete $\beta$-ensembles.
\newblock {\em Commun. Pure Appl. Math.}, 72(9):1875--1982, 2019.

\bibitem[GS15]{misha}
V.~Gorin and M.~Shkolnikov.
\newblock Multilevel {D}yson {B}rownian motions via {J}ack polynomials.
\newblock {\em Probab. Theory Relat. Fields}, 163(3):413--463, 2015.

\bibitem[Har12]{Hardy}
A.~Hardy.
\newblock A note on large deviations for 2{D} {C}oulomb gas with weakly
  confining potential.
\newblock {\em Electron. Commun. Probab.}, 17(19):1--12, 2012.

\bibitem[HK12]{Hardy2}
A.~Hardy and A.~Kuijlaars.
\newblock Weakly admissible vector equilibrium problems.
\newblock {\em J. Approx. Theory}, 164:854--868, 2012.

\bibitem[HP00]{HP00}
F.~Hiai and D.~Petz.
\newblock {\em The semicircle law, free random variables and entropy}.
\newblock AMS Providence, R.I., 2000.

\bibitem[Joh00]{jo}
K.~Johansson.
\newblock Shape fluctuations and random matrices.
\newblock {\em Comm. Math. Phys.}, 209(2):437--476, 2000.

\bibitem[Meh04]{mehta}
M.~Mehta.
\newblock {\em Random matrices}.
\newblock Elsevier, 2004.

\bibitem[Rob55]{Rob}
H.~Robbins.
\newblock A remark on {S}tirling's formula.
\newblock {\em Amer. Math. Monthly}, 62(1):26--29, 1955.

\end{thebibliography}

\end{document}